\documentclass[11pt]{amsart}

\usepackage[margin=1in]{geometry}
\usepackage{amsmath,amssymb,amsthm,mathtools}
\usepackage{xcolor}
\usepackage[colorlinks=true,linkcolor=blue!55!black,citecolor=blue!55!black,urlcolor=blue!55!black]{hyperref}
\usepackage{microtype}
\usepackage{float}
\usepackage{tikz}
\usetikzlibrary{arrows.meta,positioning}
\usepackage{subcaption}
\usepackage{enumitem}

\newcommand{\K}{\mathbb K}
\newcommand{\F}{\mathcal F}
\DeclareMathOperator{\indim}{indim}
\DeclareMathOperator{\link}{link}
\DeclareMathOperator{\del}{del}
\DeclareMathOperator{\depth}{depth}
\DeclareMathOperator{\pdim}{pdim}
\DeclareMathOperator{\reg}{reg}
\DeclareMathOperator{\Ind}{Ind}
\DeclareMathOperator{\Cl}{Cl}

\newtheorem{theorem}{Theorem}[section]
\newtheorem{lemma}[theorem]{Lemma}
\newtheorem{proposition}[theorem]{Proposition}
\newtheorem{corollary}[theorem]{Corollary}
\newtheorem{question}[theorem]{Question}
\theoremstyle{definition}
\newtheorem{definition}[theorem]{Definition}
\newtheorem{example}[theorem]{Example}
\newtheorem{remark}[theorem]{Remark}
\newcommand{\doi}[1]{\href{https://doi.org/#1}{doi:\nolinkurl{#1}}}

\title{Vertex Dismissibility and Scalability of Simplicial Complexes}
\author{Mohammed Rafiq Namiq}
\address{Department of Mathematics, College of Science, University of Sulaimani, Sulaymaniyah, Kurdistan Region, Iraq}
\email{mohammed.namiq@univsul.edu.iq}
\subjclass[2020]{Primary 05E45; Secondary 13F55, 13D02, 05C69, 52B22}
\keywords{simplicial complex, strong vertex dismissibility, scalability, independence complex, pseudoforest, independent domination, Alexander duality}

\begin{document}
	
	\begin{abstract}
		We study three nonpure extensions of vertex decomposability, shellability, and Cohen--Macaulayness at the minimum facet dimension. For $b=\operatorname{indim}\Delta$, strong vertex dismissibility gives a deletion--link recursion on the complex itself in which deletion does not decrease $b$, while vertex dismissibility and scalability require the pure initial skeleton $\Delta^{[b]}$ to be vertex decomposable and shellable, respectively. We prove that a join is strongly vertex dismissible exactly when both factors are strongly vertex dismissible. We also show that strong vertex dismissibility implies that every skeleton $\Delta^k=\Delta^{[k]}$, $k\le b$, is vertex decomposable. When $\operatorname{indim}\Delta\leq1$, weak connectedness is equivalent to vertex dismissibility, scalability, and initially Cohen--Macaulayness over some, equivalently every, field. For quasi-forests, these conditions are also equivalent to strong vertex dismissibility. Our main graph result classifies independence complexes of pseudoforests by showing that strong vertex dismissibility, vertex dismissibility, scalability, and initially Cohen--Macaulayness over some or every field are equivalent and are characterized by a componentwise independent domination criterion. Finally, we connect the combinatorial properties of the pure initial skeleton with algebraic properties of its Alexander dual, focusing on vertex splittability, linear quotients, and linear resolutions in fixed degree.
	\end{abstract}
	
	\maketitle
	
	\section{Introduction}\label{sec:1}
	
	Vertex decomposability and shellability are important combinatorial properties of simplicial complexes. They provide recursive and ordering methods for studying their topology and commutative algebra. Provan and Billera introduced vertex decomposability for pure complexes \cite{ProvanBillera1980}. Bj\"orner and Wachs later developed vertex decomposability and shellability for nonpure complexes, and Stanley studied their relation to sequentially Cohen--Macaulayness \cite{BjornerWachs1996,BjornerWachs1997,Stanley1996}. Alexander duality connects these properties with monomial ideals. The Eagon--Reiner theorem characterizes Cohen--Macaulay complexes by linear resolutions of Alexander dual ideals \cite{EagonReiner1998}. Shellability corresponds to linear quotients of the Alexander dual ideal \cite{HerzogHibiZheng2004}, while vertex decomposability corresponds to vertex splittability \cite{MoradiKhoshAhang2016}.
	
	Let $\Delta$ be a finite simplicial complex and set
	\[
	b=\indim\Delta
	=\min\{\dim F:F\in\mathcal F(\Delta)\}.
	\]
	The pure initial skeleton $\Delta^{[b]}$ is generated by all $b$-dimensional faces of $\Delta$. It describes the structure of $\Delta$ at its minimum facet dimension, including faces contained in facets of larger dimension.
	
	Initially Cohen--Macaulay modules and simplicial complexes were introduced in \cite{Namiq2026ICM}. A simplicial complex $\Delta$ is initially Cohen--Macaulay over a field $\K$ when its Stanley--Reisner ring $\K[\Delta]$ is initially Cohen--Macaulay. Equivalently, its pure initial skeleton $\Delta^{[b]}$ is Cohen--Macaulay over $\K$. This motivates stronger combinatorial conditions at the same dimension, modeled on vertex decomposability and shellability.
	
	We call $\Delta$ vertex dismissible when $\Delta^{[b]}$ is vertex decomposable and \emph{scalable} when $\Delta^{[b]}$ is shellable. To obtain a recursion on the nonpure complex itself, we introduce strong vertex dismissibility. A vertex $x$ is dismissing if
	\[
	\indim\del_\Delta(x)\geq\indim\Delta.
	\]
	The complex $\Delta$ is strongly vertex dismissible if it is a simplex, or if it has a dismissing vertex $x$ such that both $\del_\Delta(x)$ and $\link_\Delta(x)$ are strongly vertex dismissible. Thus vertex dismissibility and scalability are determined by the pure initial skeleton, whereas strong vertex dismissibility is a recursion on the complex itself.
	
	A basic local result relates these two notions. Lemma~\ref{lem:2} proves that $x$ is dismissing in $\Delta$ exactly when it is a shedding vertex of $\Delta^{[b]}$. Corollary~\ref{cor:2} then places strong vertex dismissibility between ordinary vertex decomposability and vertex dismissibility, followed by scalability and initially Cohen--Macaulayness. The converses fail in general. Proposition~\ref{prop:2} shows that strong vertex dismissibility agrees with ordinary vertex decomposability for pure complexes.
	
	Theorem~\ref{thm:1} shows that a join is strongly vertex dismissible exactly when both factors have this property. Theorem~\ref{thm:2} gives a sufficient reduction using vertices that occur only in facets above the initial dimension. The main structural result, Theorem~\ref{thm:3}, shows that if $\Delta$ is strongly vertex dismissible and $b=\indim\Delta$, then $\Delta^{[k]}=\Delta^k$ is vertex decomposable for every $-1\leq k\leq b$.
	
	We also obtain equivalence results for several classes of complexes. We use the notion of weak connectedness from \cite{Namiq2026ICM}, where a complex is weakly connected when its pure initial skeleton is strongly connected. Theorem~\ref{thm:4} shows that when $\indim\Delta\leq1$, weak connectedness is equivalent to vertex dismissibility, scalability, and initially Cohen--Macaulayness. For quasi-forests, Theorem~\ref{thm:5} shows that these conditions are also equivalent to strong vertex dismissibility. Corollaries~\ref{cor:4} and~\ref{cor:5} give corresponding results for independence complexes of co-chordal and complete multipartite graphs.
	
	We next study independence complexes. If $G$ is a graph, the facets of $\Ind(G)$ are its maximal independent sets. Proposition~\ref{prop:6} gives $\indim\Ind(G)=i(G)-1$ and shows that a vertex $v$ is dismissing exactly when
	\[
	i(G-v)\geq i(G),
	\]
	where $i(G)$ is the independent domination number \cite{Haynes1998}. It also expresses the two recursive branches through $G-v$ and $G-N_G[v]$. A graph is independent domination vertex critical, or $i$-vertex-critical, if $i(G-v)<i(G)$ for every $v\in V(G)$ \cite{AnanchuenEtAl2018,EdwardsMacGillivray2012}. Hence no vertex of an $i$-vertex-critical graph is dismissing. Corollary~\ref{cor:6} shows that if such a graph has an edge, then its independence complex is not strongly vertex dismissible.
	
	Two related lines of work should be distinguished from the problem studied here. Castrill\'on, Cruz, and Reyes characterized ordinary vertex decomposability, shellability, and Cohen--Macaulayness of independence complexes of unicyclic graphs \cite{CastrillonCruzReyes2016}. Wu, Jiang, Nazari-Moghaddam, Sheikholeslami, Shao, and Volkmann studied independent domination stability for trees and unicyclic graphs \cite{WuEtAl2019}; in particular, they obtained the same condition modulo $3$ for cycles. These results concern different properties from the initial skeleton conditions and the strong recursion considered in this paper.
	
	Proposition~\ref{prop:7} gives a complete description for connected unicyclic graphs using rooted independent domination. A cycle of length $3q+1$ is the cyclic obstruction unless one of its vertices is dismissing in the corresponding attached rooted tree. Theorem~\ref{thm:6} extends this description componentwise to pseudoforests and gives equivalent conditions for strong vertex dismissibility, vertex dismissibility, scalability, and initially Cohen--Macaulayness. This pseudoforest criterion is the main graph result of the paper. In particular, Corollary~\ref{cor:8} yields the residue condition $m\not\equiv1\pmod3$ for cycles in the present setting.
	
	The algebraic part uses Alexander duality. Let $J=I_{\Delta^\vee}$ and $d=n-b-1$. For a monomial ideal $I$, write
	\[
	I_\ell=
	\bigl\langle
	u\in I:
	u\text{ is squarefree and }\deg u=\ell
	\bigr\rangle.
	\]
	Lemma~\ref{lem:8} gives $J_d=I_{(\Delta^{[b]})^\vee}$. Theorem~\ref{thm:7} identifies vertex dismissibility with vertex divisibility, equivalently with vertex splittability of $J_d$, and scalability with degree quotients, equivalently with linear quotients of $J_d$. Strong vertex dismissibility corresponds to the recursive Alexander dual notion of strong vertex divisibility.
	
	Degree resolutions were introduced in \cite{Namiq2026Skeletons}. An ideal $I$ has a degree resolution when $\reg I=\deg I$. Earlier work \cite{Namiq2026ICM} shows that initially Cohen--Macaulayness is equivalent to the Alexander dual ideal itself having a degree resolution. In the present notation, this is also equivalent to $J_d$ having a $d$-linear resolution. Corollary~\ref{cor:11} further shows that if $\Delta$ is strongly vertex dismissible, then every relevant ideal $J_{n-k-1}$ is vertex splittable, has linear quotients, and has a linear resolution.
	
	Finally, Corollary~\ref{cor:12} gives the equivalent depth, projective dimension, and Alexander dual conditions for initially Cohen--Macaulayness. In particular, Corollary~\ref{cor:13} shows that if $\Delta$ is strongly vertex dismissible, vertex dismissible, or scalable, then
	\[
	\depth\K[\Delta]=b+1,
	\qquad
	\pdim\K[\Delta]=n-b-1
	\]
	over every field.
	
	The examples show that the implications among strong vertex dismissibility, vertex dismissibility, scalability, and initially Cohen--Macaulayness are strict, with the field qualification in Example~\ref{ex:3}. They also show that strong vertex dismissibility is not preserved under arbitrary links and, in general, is not determined by the pure initial skeleton alone.
	
	The paper is organized as follows. Section~\ref{sec:2} gives the notation and background. Section~\ref{sec:3} develops strong vertex dismissibility, joins, and the reduction by free vertices above the initial dimension. Section~\ref{sec:4} studies the pure initial skeleton and the relations among vertex dismissibility, scalability, and initially Cohen--Macaulayness. Section~\ref{sec:5} contains the quasi-forest and pseudoforest results. Section~\ref{sec:6} develops the Alexander dual correspondences, Section~\ref{sec:7} gives the homological consequences, and Section~\ref{sec:8} discusses limitations and open problems.
	
	\section{Conventions and background}\label{sec:2}
	
	All simplicial complexes are finite and contain the empty face.  The complex $\{\emptyset\}$ is the $(-1)$-simplex and has dimension $-1$.  A simplex on a finite set $F$ is denoted by $2^F$; thus $2^\emptyset=\{\emptyset\}$.  The vertex set is
	\[
	V(\Delta)=\bigcup_{F\in\Delta}F.
	\]
	A larger finite set $X\supseteq V(\Delta)$ may be fixed as a ground set. Elements of $X\setminus V(\Delta)$ do not occur as vertices of $\Delta$.  Recursive operations are performed on vertices of $\Delta$, whereas Alexander duality is always taken over the stated ground set $X$.
	
	The set of facets of $\Delta$ is $\F(\Delta)$.  We use
	\[
	\dim\Delta=\max\{\dim F:F\in\F(\Delta)\},
	\qquad
	\indim\Delta=\min\{\dim F:F\in\F(\Delta)\}.
	\]
	For $x\in V(\Delta)$,
	\[
	\del_\Delta(x)=\{F\in\Delta:x\notin F\},
	\qquad
	\link_\Delta(x)=\{F\in\Delta:x\notin F,\ F\cup\{x\}\in\Delta\}.
	\]
	We use the same notation for the link of each face. For $-1\leq k\leq\dim\Delta$, the ordinary $k$-skeleton and the pure $k$-skeleton are
	\[
	\Delta^k=\{F\in\Delta:\dim F\leq k\},
	\qquad
	\Delta^{[k]}=\langle F\in\Delta:\dim F=k\rangle.
	\]
	Thus the facets of $\Delta^{[k]}$ are all $k$-faces of $\Delta$.  This is the standard pure skeleton convention.  In particular, $\Delta^{[b]}$ contains the $b$-faces lying inside higher dimensional facets; it is not generated only by the facets of $\Delta$ having dimension $b$.
	
	\begin{lemma}\label{lem:1}
		Let $b=\indim\Delta$.  Then $\Delta^k=\Delta^{[k]}$ for every $-1\leq k\leq b$.
	\end{lemma}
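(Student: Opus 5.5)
We prove the two inclusions between $\Delta^{[k]}$ and $\Delta^k$ directly from the definitions, for a fixed $k$ with $-1\leq k\leq b$.

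\textbf{The inclusion $\Delta^{[k]}\subseteq\Delta^k$.} This is immediate. By definition $\Delta^{[k]}$ is generated by the $k$-faces of $\Delta$. Any face of $\Delta^{[k]}$ is a subset of some $k$-face, so it is itself a face of $\Delta$ of dimension at most $k$.

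\textbf{The inclusion $\Delta^k\subseteq\Delta^{[k]}$.} Take $G\in\Delta$ with $\dim G\leq k$; we must show $G$ lies in some $k$-face of $\Delta$. Choose a facet $F\in\F(\Delta)$ with $G\subseteq F$. Such an $F$ exists because $\Delta$ is finite, so $G$ extends to a maximal face. Since $b=\indim\Delta$ is the minimum facet dimension, we have $\dim F\geq b\geq k$, so $|F|\geq k+1\geq |G|$. We then pick $H$ with $G\subseteq H\subseteq F$ and $|H|=k+1$, by adjoining $k+1-|G|$ elements of $F\setminus G$ to $G$. Since $\Delta$ is closed under subsets, $H\in\Delta$ and $\dim H=k$. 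Hence $H$ is a generator of $\Delta^{[k]}$, and $G\subseteq H$ gives $G\in\Delta^{[k]}$.

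\textbf{Edge case $k=-1$.} Both sides equal $\{\emptyset\}$, because $\emptyset$ is the unique $(-1)$-face and every complex contains it. The general argument covers this case as well.

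The only step needing care is the existence of the intermediate face $H$. It depends on the inequality $\dim F\geq k$, which holds precisely because $k\leq b$ and every facet has dimension at least $b$. There is no real obstacle beyond this observation.
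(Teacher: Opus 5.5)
Your proposal is correct and follows essentially the same argument as the paper: extend a face of dimension at most $k$ to a facet $F$, use $\dim F\geq b\geq k$ to find a $k$-face between them, and note that the reverse inclusion is immediate from the definitions. Your remark that the general argument also covers $k=-1$ is accurate; the paper simply treats that case separately.
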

	
	\begin{proof}
		For $k=-1$, the statement follows directly from the definitions.  Let $0\leq k\leq b$ and let $A$ be a face of $\Delta$ with $\dim A\leq k$.  Extend $A$ to a facet $F$ of $\Delta$.  Since $\dim F\geq b\geq k$, there is a $k$-face $B$ with $A\subseteq B\subseteq F$.  Hence $A\in\Delta^{[k]}$.  The reverse inclusion follows from the definition.
	\end{proof}
	
	We use the Bj\"orner--Wachs convention for vertex decomposability.  A complex is vertex decomposable if it is a simplex, or if it has a vertex $x$ such that both $\del_\Delta(x)$ and $\link_\Delta(x)$ are vertex decomposable and no facet of $\link_\Delta(x)$ is a facet of $\del_\Delta(x)$.  Such an $x$ is a \emph{shedding vertex}.  For a pure complex this is the Provan--Billera recursion.  We use pure shellability in the standard facet order sense.  Whenever Cohen--Macaulayness is mentioned, it is taken over the displayed field $\K$.
	
	For a fixed ground set $X$, let $R=\K[x_v:v\in X]$ and write $x^A=\prod_{v\in A}x_v$.  The Alexander dual is denoted by $\Delta^{\vee}$.  The convention needed below is
	\[
	I_{\Delta^{\vee}}
	=\bigl(x^{X\setminus F}:F\in\F(\Delta)\bigr).
	\]
	For a simplex $2^X$ this is the unit ideal.  We use the usual conventions for the zero and unit ideals and for their linear resolutions.
	
	\section{Dismissing vertices and the recursive theory}\label{sec:3}
	
	This section introduces dismissing vertices and develops the recursive theory for strong vertex dismissibility through deletion and link.
	
	\begin{definition}\label{def:1}
		A vertex $x\in V(\Delta)$ is \emph{dismissing} if $\indim\del_\Delta(x)\geq\indim\Delta$.
	\end{definition}
	
	The following lemma gives the exact relation between the complex and its pure initial skeleton.
	
	\begin{lemma}\label{lem:2}
		Let $b=\indim\Delta$.  A vertex $x$ is dismissing in $\Delta$ if and only if $x$ is a shedding vertex of $\Delta^{[b]}$.
	\end{lemma}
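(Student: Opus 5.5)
The plan is to compare the two deletions directly, using only the facet condition in the Björner--Wachs definition. Here ``shedding vertex of $\Gamma$'' will mean that no facet of $\link_\Gamma(x)$ is a facet of $\del_\Gamma(x)$; the recursive vertex decomposability of the branches is not part of the local statement. Set $\Gamma=\Delta^{[b]}$.

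First I dispose of the degenerate cases. If $b=-1$, then $\Delta=\{\emptyset\}$, which has no vertices, so both sides of the equivalence are vacuous. For $b\geq 0$, Lemma~\ref{lem:1} gives $\Gamma=\Delta^b$. Hence $V(\Gamma)=V(\Delta)$, and $\Gamma$ is pure of dimension $b$.

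The key observation is that the $b$-faces of $\Gamma$ avoiding $x$ are exactly the $b$-faces of $\del_\Delta(x)$. Therefore
\[
\del_\Gamma(x)=\langle B\in\del_\Delta(x):\dim B=b\rangle .
\]
Also, every facet of $\link_\Gamma(x)$ has the form $B\setminus\{x\}$ with $B$ a $b$-face of $\Delta$ containing $x$, so it has dimension $b-1$. I will reformulate both conditions as follows.
\begin{itemize}
\item[(a)] $x$ is dismissing if and only if every face of $\del_\Delta(x)$ lies in some $b$-face of $\del_\Delta(x)$.
\item[(b)] $x$ is shedding in $\Gamma$ if and only if every $(b-1)$-face $G$ with $G\cup\{x\}\in\Delta$ lies in a $b$-face of $\Delta$ avoiding $x$.
\end{itemize}
The reason for (b) is that $G\in\del_\Gamma(x)$, and $G$ is a facet there exactly when no such $b$-face exists.

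Forward direction. Assume $x$ is dismissing and let $G$ be a facet of $\link_\Gamma(x)$. Then $G\in\del_\Delta(x)$, so by (a) it is contained in a $b$-face avoiding $x$. Hence $G$ is not a facet of $\del_\Gamma(x)$.

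Converse direction. This is the only step needing care. Suppose some facet $H$ of $\del_\Delta(x)$ has $\dim H<b$; I derive a contradiction.
\begin{itemize}
\item Since $\dim H<b=\indim\Delta$, the face $H$ is not a facet of $\Delta$. By maximality of $H$ in $\del_\Delta(x)$, the only vertex that can enlarge $H$ is $x$, so $H\cup\{x\}\in\Delta$.
\item Extend $H\cup\{x\}$, inside a facet of dimension at least $b$, to a $b$-face $B$. Then $B\setminus\{x\}$ is a face of $\del_\Delta(x)$ containing $H$.
\item Maximality of $H$ forces $B\setminus\{x\}=H$. So $H$ is a $(b-1)$-face, and it is a facet of $\link_\Gamma(x)$.
\item Since $H$ is maximal in $\del_\Delta(x)$, it lies in no $b$-face avoiding $x$. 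Hence $H$ is also a facet of $\del_\Gamma(x)$, contradicting the shedding property.
\end{itemize}

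I expect this converse to be the main obstacle. The point is that a low-dimensional facet of the deletion must have dimension exactly $b-1$, and must arise as a link facet. The argument above handles it by the double use of maximality of $H$. A small sanity check: when $\Delta$ is the simplex on $\{x\}$, both conditions fail, so the conventions are consistent.
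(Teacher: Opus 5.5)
Your proof is correct and follows the paper's argument essentially step for step. In the forward direction, for a $b$-face $F\ni x$, the face $F\setminus\{x\}$ lies in a facet of $\del_\Delta(x)$ of dimension at least $b$, hence in a $b$-face avoiding $x$. In the converse, maximality of a deletion facet $H$ with $\dim H<b$ forces $H\cup\{x\}$ to be a $b$-face, so $H$ is a common facet of the link and the deletion in $\Delta^{[b]}$.

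One line needs fixing: the display $\del_\Gamma(x)=\langle B\in\del_\Delta(x):\dim B=b\rangle$ is false in general. For $\Delta=\langle xa,ab,xc\rangle$ it misses the vertex $c$. The correct description is $\del_\Gamma(x)=\{F\in\del_\Delta(x):\dim F\le b\}$, and this is what your justification of (b) and the last step of your converse actually use.
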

	
	\begin{proof}
		Assume first that $x$ is dismissing.  Let $F$ be a $b$-face containing $x$. The face $F\setminus\{x\}$ belongs to $\del_\Delta(x)$.  It extends to a facet $Q$ of the deletion, and $\dim Q\geq b$.  Choose a $b$-face $F'$ of $Q$ containing $F\setminus\{x\}$.  Then $F'$ does not contain $x$ and is a facet of $\Delta^{[b]}$.  This is the shedding condition in the pure $b$-skeleton.
		
		Conversely, suppose that $x$ is shedding in $\Delta^{[b]}$.  If $\del_\Delta(x)$ had a facet $Q$ with $\dim Q<b$, extend $Q$ to a facet $Q'$ of $\Delta$.  Maximality of $Q$ among the faces not containing $x$ implies that $Q'=Q\cup\{x\}$.  Since every facet of $\Delta$ has dimension at least $b$, we obtain $\dim Q=b-1$ and $\dim Q'=b$.  The shedding condition gives a $b$-face not containing $x$ and containing $Q$, which contradicts the assumption that $Q$ is a facet of the deletion.
	\end{proof}
	
	\begin{corollary}\label{cor:1}
		Every shedding vertex of $\Delta$ is dismissing.  If $\Delta$ is pure, then a vertex is dismissing if and only if it is shedding.
	\end{corollary}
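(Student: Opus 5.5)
The first claim follows from the characterization of dismissing vertices given in Lemma~\ref{lem:2}. Let $b=\indim\Delta$ and let $x$ be a shedding vertex of $\Delta$ in the Bj\"orner--Wachs sense, so no facet of $\link_\Delta(x)$ is a facet of $\del_\Delta(x)$. I would argue directly that $\indim\del_\Delta(x)\geq b$, which is the definition of a dismissing vertex (Definition~\ref{def:1}); this avoids any need to pass to the skeleton.

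Suppose $Q$ is a facet of $\del_\Delta(x)$ with $\dim Q<b$. Extend $Q$ to a facet $Q'$ of $\Delta$. Since $Q$ is maximal among faces of $\Delta$ avoiding $x$, we get $Q'=Q\cup\{x\}$, exactly as in the proof of Lemma~\ref{lem:2}. Then $Q\in\link_\Delta(x)$. Moreover, $Q$ is a facet of the link: if $Q\subsetneq G\in\link_\Delta(x)$, then $G$ would be a face of $\Delta$ avoiding $x$ and strictly containing $Q$, contradicting maximality of $Q$ in the deletion. Thus $Q$ is a facet of both $\link_\Delta(x)$ and $\del_\Delta(x)$, contradicting the shedding condition. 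Hence every facet of $\del_\Delta(x)$ has dimension at least $b$, and $x$ is dismissing. One must also confirm that $\del_\Delta(x)$ is not the void complex; this holds since it always contains $\emptyset$. The degenerate case $\del_\Delta(x)=\{\emptyset\}$ would force $\Delta=2^{\{x\}}$ with $b=0$, and then $\emptyset$ is a facet of both deletion and link, so $x$ is not shedding and the case does not arise.

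For the pure case, assume $\Delta$ is pure of dimension $b$. Then $\Delta^{[b]}=\Delta$, since every face lies in a $b$-dimensional facet. Lemma~\ref{lem:2} then says that $x$ is dismissing exactly when it is a shedding vertex of $\Delta^{[b]}=\Delta$. The only point needing care is that Lemma~\ref{lem:2} proves the shedding condition in the sense ``no facet of the link is a facet of the deletion'', which is the definition used here, so no further translation is required. I expect no real obstacle; the one delicate step is checking maximality of $Q$ inside the link.
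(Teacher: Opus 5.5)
Your proposal is correct and matches the paper's proof: a facet $Q$ of $\del_\Delta(x)$ with $\dim Q<b$ can only extend to the facet $Q\cup\{x\}$ of $\Delta$, so $Q$ is a facet of both the link and the deletion, which contradicts shedding. The pure case is then Lemma~\ref{lem:2} with $\Delta^{[b]}=\Delta$. Your separate treatment of the void complex and of $\Delta=2^{\{x\}}$ is harmless but unnecessary, because the main argument already covers $Q=\emptyset$.
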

	
	\begin{proof}
		Suppose that $x$ is shedding in $\Delta$.  If the deletion had a facet $Q$ of dimension below $b=\indim\Delta$, extend $Q$ to a facet $Q'$ of $\Delta$.  Maximality of $Q$ in the deletion implies that $Q'=Q\cup\{x\}$.  Thus $Q$ is a face of the link.  It is a facet of the link as well, because a larger link face would be a larger face of the deletion.  This contradicts the shedding condition.  Hence $x$ is dismissing.  If $\Delta$ is pure of dimension $b$, then $\Delta^{[b]}=\Delta$, so the second statement follows from Lemma~\ref{lem:2}.
	\end{proof}
	
	\begin{example}\label{ex:1}
		Let $\Delta=\langle xa,ab,xcd\rangle$. Then $\indim\Delta=1$ and $\del_\Delta(x)=\langle ab,cd\rangle$, so $x$ is dismissing.  It is not a shedding vertex of $\Delta$, because $cd$ is a facet of both $\link_\Delta(x)$ and $\del_\Delta(x)$.  By Lemma~\ref{lem:2}, $x$ is a shedding vertex of the pure initial skeleton.
	\end{example}
	
	\begin{definition}\label{def:2}
		A simplicial complex $\Delta$ is \emph{strongly vertex dismissible} if either $\Delta$ is a simplex, or there is a dismissing vertex $x$ such that $\del_\Delta(x)$ and $\link_\Delta(x)$ are both strongly vertex dismissible.
	\end{definition}
	
	This recursion differs from weak $k$-decomposability and weak vertex decomposability. It uses both deletion and link in the recursion, but the vertex is chosen using the minimum facet dimension rather than the shedding condition on the entire complex \cite{NagelRomer2008,ProvanBillera1980}.
	
	\begin{proposition}\label{prop:1}
		Every vertex decomposable complex is strongly vertex dismissible.
	\end{proposition}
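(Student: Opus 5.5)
The plan is to argue by induction on the number of vertices of $\Delta$, or equivalently on the length of the Bj\"orner--Wachs decomposition recursion. Let $\Delta$ be vertex decomposable. If $\Delta$ is a simplex, it is strongly vertex dismissible by Definition~\ref{def:2} and there is nothing to prove. Otherwise the Bj\"orner--Wachs convention gives a shedding vertex $x$ of $\Delta$ such that both $\del_\Delta(x)$ and $\link_\Delta(x)$ are vertex decomposable.

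The first step is to check that this $x$ is dismissing. This is exactly the first statement of Corollary~\ref{cor:1}: every shedding vertex of $\Delta$ is dismissing, so $\indim\del_\Delta(x)\geq\indim\Delta$. No further argument about skeleta is needed at this point, since Corollary~\ref{cor:1} already reduces the facet comparison to the shedding condition.

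The second step applies the induction hypothesis. Both $\del_\Delta(x)$ and $\link_\Delta(x)$ have vertex sets contained in $V(\Delta)\setminus\{x\}$, so they have strictly fewer vertices than $\Delta$. They are vertex decomposable, so by induction both are strongly vertex dismissible. Hence $x$ is a dismissing vertex whose deletion and link are strongly vertex dismissible, and Definition~\ref{def:2} shows that $\Delta$ is strongly vertex dismissible.

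The only point needing care is the degenerate case. The link or deletion may be $\{\emptyset\}$, which is the simplex $2^\emptyset$, or may drop vertices. The convention that $\{\emptyset\}$ is a simplex handles the base case uniformly, and the induction is on $|V|$, which strictly decreases. So I expect no real obstacle: the substantive content is Corollary~\ref{cor:1}, which the paper has already proved.
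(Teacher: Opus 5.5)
Your proof is correct and matches the paper's: induction on the number of vertices, with Corollary~\ref{cor:1} showing that the shedding vertex is dismissing and the induction hypothesis applied to the deletion and the link. Your check that $\{\emptyset\}=2^\emptyset$ counts as a simplex is a sensible extra detail; the paper leaves it implicit.
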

	
	\begin{proof}
		We induct on the number of vertices.  If $\Delta$ is a simplex, the assertion follows. Otherwise choose a shedding vertex $x$.  It is dismissing by Corollary~\ref{cor:1}, and the deletion and link are strongly vertex dismissible by induction.
	\end{proof}
	
	\begin{proposition}\label{prop:2}
		A pure complex is strongly vertex dismissible if and only if it is vertex decomposable.
	\end{proposition}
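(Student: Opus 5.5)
One direction is Proposition~\ref{prop:1}: a vertex decomposable complex, pure or not, is strongly vertex dismissible. So only the converse needs proof. Assume $\Delta$ is pure of dimension $b$ and strongly vertex dismissible. I will show that $\Delta$ is vertex decomposable by induction on the number of vertices. If $\Delta$ is a simplex there is nothing to prove. Otherwise Definition~\ref{def:2} gives a dismissing vertex $x$ such that $\del_\Delta(x)$ and $\link_\Delta(x)$ are both strongly vertex dismissible. Corollary~\ref{cor:1} says that, since $\Delta$ is pure, $x$ is a shedding vertex of $\Delta$. It therefore suffices to show that $\del_\Delta(x)$ and $\link_\Delta(x)$ are vertex decomposable. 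Both have fewer vertices than $\Delta$, so induction applies once they are known to be pure.

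The key step is purity of the deletion and the link. For the link, every facet $G$ of $\link_\Delta(x)$ satisfies $G\cup\{x\}\in\F(\Delta)$. Hence $\dim G=b-1$, and the link is pure. For the deletion, each facet $Q$ of $\del_\Delta(x)$ lies in a facet of $\Delta$, so $\dim Q\leq b$. Since $x$ is dismissing, $\indim\del_\Delta(x)\geq b$, so $\dim Q\geq b$. Thus the deletion is pure of dimension $b$. One degenerate case needs checking: the deletion could be $\{\emptyset\}$ when $\Delta$ is the single vertex $x$, but that $\Delta$ is a simplex and was already handled.

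The induction hypothesis now makes both the deletion and the link vertex decomposable. Together with the shedding property of $x$, this gives vertex decomposability of $\Delta$ in the Bj\"orner--Wachs sense, which for pure complexes is the Provan--Billera recursion. I expect no real obstacle. The only delicate point is the purity of the deletion, and that is exactly what the dismissing condition provides.
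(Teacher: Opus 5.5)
Your proof is correct and follows the paper's argument. One direction is Proposition~\ref{prop:1}; for the converse you induct on the number of vertices, use Corollary~\ref{cor:1} to make the dismissing vertex a shedding vertex, and apply induction to the deletion (pure of dimension $b$) and the link (pure of dimension $b-1$). The only addition is that you spell out why the deletion and link are pure, which the paper simply asserts.
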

	
	\begin{proof}
		One direction is Proposition~\ref{prop:1}.  For the converse, we induct on the number of vertices.  If $x$ begins a strong dismissal of a pure $p$-dimensional complex, then $x$ is shedding by Corollary~\ref{cor:1}.  Its deletion is pure of dimension $p$, its link is pure of dimension $p-1$, and both are strongly vertex dismissible.  By induction, the deletion and link are vertex decomposable, so $x$ begins an ordinary vertex decomposition.
	\end{proof}
	
	\begin{example}\label{ex:2}
		Let $\Delta=\langle a,bc,de\rangle$. The vertices $b,a,d,c$ may be deleted in that order; at every step the link is a simplex, and the deleting vertex is dismissing.  Thus $\Delta$ is strongly vertex dismissible.  On the other hand, only $a$ can begin an ordinary vertex decomposition.  Its deletion is the pure complex $\langle bc,de\rangle$, a disjoint union of two edges, which has no shedding vertex.  Hence $\Delta$ is not vertex decomposable.
	\end{example}
	
	\subsection{Joins}
	
	The join is taken on disjoint vertex sets.  Its facets are the unions of facets of the factors, and therefore
	\[
	\indim(\Delta*\Gamma)
	=\indim\Delta+\indim\Gamma+1.
	\]
	
	\begin{theorem}\label{thm:1}
		Let $\Delta$ and $\Gamma$ be finite complexes on disjoint vertex sets.  Then the join $\Delta*\Gamma$ is strongly vertex dismissible if and only if $\Delta$ and $\Gamma$ are strongly vertex dismissible.
	\end{theorem}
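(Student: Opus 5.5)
We prove both directions by induction on the total number of vertices $|V(\Delta)|+|V(\Gamma)|$. The induction rests on the standard join identities for a vertex $x\in V(\Delta)$:
\[
\del_{\Delta*\Gamma}(x)=\del_\Delta(x)*\Gamma,
\qquad
\link_{\Delta*\Gamma}(x)=\link_\Delta(x)*\Gamma,
\]
together with the formula $\indim(\Delta*\Gamma)=\indim\Delta+\indim\Gamma+1$. Combining these, $x$ is dismissing in $\Delta*\Gamma$ if and only if it is dismissing in $\Delta$. One subtlety needs care: if $\del_\Delta(x)=\{\emptyset\}$, that is, $\Delta=2^{\{x\}}$, then the identities still hold, since $\{\emptyset\}*\Gamma=\Gamma$. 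We also use that a join is a simplex if and only if both factors are simplices.

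For the \emph{if} direction, suppose both factors are strongly vertex dismissible. If both are simplices, the join is a simplex. Otherwise, by symmetry, $\Delta$ is not a simplex and has a dismissing vertex $x$ with $\del_\Delta(x)$ and $\link_\Delta(x)$ strongly vertex dismissible. Then $x$ is dismissing in $\Delta*\Gamma$. Its deletion and link are $\del_\Delta(x)*\Gamma$ and $\link_\Delta(x)*\Gamma$, which have fewer vertices, so they are strongly vertex dismissible by induction.

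For the \emph{only if} direction, suppose $\Delta*\Gamma$ is strongly vertex dismissible. If the join is a simplex, both factors are simplices. Otherwise take a dismissing vertex $x$ starting the recursion, and say $x\in V(\Delta)$. Then $\del_\Delta(x)*\Gamma$ and $\link_\Delta(x)*\Gamma$ are strongly vertex dismissible and have fewer vertices. By induction, $\Gamma$, $\del_\Delta(x)$ and $\link_\Delta(x)$ are all strongly vertex dismissible. Since $x$ is dismissing in $\Delta$, it follows that $\Delta$ is strongly vertex dismissible.

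The main obstacle is bookkeeping at the degenerate ends of the recursion. The deletion $\del_\Delta(x)$ may be $\{\emptyset\}$, and the link may be $\{\emptyset\}$ when $\{x\}$ is a facet. We must check that $\{\emptyset\}$ counts as a simplex, which it does by the conventions, and that the vertex set of $\del_\Delta(x)$ may shrink by more than $x$. The induction is on the number of vertices, which strictly decreases in every case, so this causes no problem. We also need the dismissing equivalence to hold when $\indim$ of the deletion is computed for $\{\emptyset\}$, where it equals $-1$. This case occurs only when $\Delta=2^{\{x\}}$, where $\indim\Delta=0$; then $x$ is not dismissing in either complex, so the equivalence is consistent.
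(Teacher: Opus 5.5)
Your proof is correct and follows the paper's argument: the same deletion and link identities for the join, the same transfer of the dismissing condition through $\indim(\Delta*\Gamma)=\indim\Delta+\indim\Gamma+1$, and the same induction on $|V(\Delta)|+|V(\Gamma)|$ in both directions. One harmless slip in your bookkeeping paragraph: $V(\del_\Delta(x))=V(\Delta)\setminus\{x\}$ always, since each $\{v\}$ with $v\neq x$ survives deletion, so only the link's vertex set can shrink by more than $x$.
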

	
	\begin{proof}
		For $x\in V(\Delta)$,
		\[
		\del_{\Delta*\Gamma}(x)=\del_\Delta(x)*\Gamma,
		\qquad
		\link_{\Delta*\Gamma}(x)=\link_\Delta(x)*\Gamma.
		\]
		The formula for the initial dimension shows that $x$ is dismissing in the join if and only if it is dismissing in $\Delta$.  The same statement holds for a vertex of $\Gamma$.
		
		Induct on $|V(\Delta)|+|V(\Gamma)|$.  Suppose first that both factors are strongly vertex dismissible.  If both are simplices, then their join is a simplex.  Otherwise choose a first dismissing vertex in a factor that is not a simplex, say $x\in V(\Delta)$.  Both joins displayed above are strongly vertex dismissible by induction, and $x$ is dismissing in the join.  Hence $\Delta*\Gamma$ is strongly vertex dismissible.
		
		Conversely, suppose that $\Delta*\Gamma$ is strongly vertex dismissible.  If it is a simplex, then each factor is a simplex.  Otherwise let $x$ begin a strong dismissal of the join.  If $x\in V(\Delta)$, induction applied to the deletion and link gives
		\[
		\del_\Delta(x),\quad \link_\Delta(x),\quad \Gamma
		\]
		strongly vertex dismissible.  Since $x$ is dismissing in $\Delta$, it begins a strong dismissal of $\Delta$.  Thus both factors have the property.  The case $x\in V(\Gamma)$ is symmetric.
	\end{proof}
	
	\subsection{Reductions by free vertices above the initial dimension}
	
	\begin{definition}\label{def:3}
		Let $b=\indim\Delta$.  A vertex $x$ is \emph{free above the initial dimension} if it is contained in exactly one facet $F$ and $\dim F>b$.  A reduction by such vertices is a sequence in which, after each deletion, the next vertex is free above the initial dimension in the complex that remains.
	\end{definition}
	
	\begin{theorem}\label{thm:2}
		Suppose that repeatedly deleting vertices that are free above the initial dimension ends in a pure vertex decomposable complex.  Then $\Delta$ is strongly vertex dismissible.
	\end{theorem}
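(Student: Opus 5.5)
Let $x_1,\dots,x_r$ be the reduction sequence, $\Delta_0=\Delta$ and $\Delta_i=\del_{\Delta_{i-1}}(x_i)$, so that $\Delta_r$ is pure and vertex decomposable. I will argue by induction on $r$. When $r=0$ the complex $\Delta$ itself is pure vertex decomposable, and Proposition~\ref{prop:1} shows it is strongly vertex dismissible. For $r\geq1$ it is enough to check three things about $x=x_1$: it is dismissing in $\Delta$; $\del_\Delta(x)=\Delta_1$ is strongly vertex dismissible; and $\link_\Delta(x)$ is strongly vertex dismissible. The second point follows from the induction hypothesis, because $\Delta_1$ admits the shorter reduction $x_2,\dots,x_r$ that ends in the same pure vertex decomposable complex.

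For the first point, let $F$ be the unique facet of $\Delta$ containing $x$, with $\dim F>b$. I will first describe the facets of $\del_\Delta(x)$.
\begin{itemize}
\item Every facet $G\neq F$ of $\Delta$ does not contain $x$, so it remains a face of the deletion, and it stays maximal there.
\item The face $F\setminus\{x\}$ has dimension $\dim F-1\geq b$. It is either a facet of the deletion or contained in some other facet $G$.
\end{itemize}
Hence every facet of the deletion has dimension at least $b$, so $\indim\del_\Delta(x)\geq b$ and $x$ is dismissing. One should also note the degenerate case where $F$ is the only facet. Then $\Delta$ is a simplex and the statement is trivial, so we may assume $\Delta$ is not a simplex.

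For the link, since $F$ is the only facet containing $x$, we have $\link_\Delta(x)=2^{F\setminus\{x\}}$. This is a simplex, so it is strongly vertex dismissible by definition. This completes the induction step.

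The main subtlety is that the hypothesis "free above the initial dimension" in the complex that remains refers to $\indim\Delta_{i-1}$ at each stage. We must make sure the induction hypothesis applies to $\Delta_1$ with its own initial dimension. This holds automatically, because Definition~\ref{def:3} is stated relative to the current complex. The dismissing verification, in the facet analysis above, uses only $\dim F>\indim$ of the current complex. So I expect no real obstacle beyond stating the facet description of the deletion carefully.
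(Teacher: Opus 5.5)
Your proof is correct and follows the paper's argument: your induction on the length of the reduction is the paper's ``reverse the reduction'' step, with the same facet analysis of $\del_\Delta(x)$ showing that $x$ is dismissing and the same observation that $\link_\Delta(x)=2^{F\setminus\{x\}}$ is a simplex. The degenerate case you set aside cannot occur, because a vertex free above the initial dimension lies in a facet of dimension $>\indim\Delta$, which forces $\Delta$ to have another facet.
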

	
	\begin{proof}
		Let $x$ be free above the initial dimension, and let $F$ be its unique containing facet.  No facet of minimum dimension contains $x$, so deletion preserves the initial dimension.  The only possible new facet is $F\setminus\{x\}$, whose dimension is at least the old initial dimension.  Thus $x$ is dismissing. Also,
		\[
		\link_\Delta(x)=2^{F\setminus\{x\}}
		\]
		is a simplex.  Start from the final pure vertex decomposable complex, which is strongly vertex dismissible by Proposition~\ref{prop:1}, and reverse the reduction.  At each step the deletion branch has already been verified and the link branch is a simplex.
	\end{proof}
	
	\begin{remark}\label{rem:1}
		The theorem does not imply that repeated deletion of such free vertices must end at the original pure initial skeleton. Deleting a vertex from a facet above the initial dimension may create a new facet, and the same free vertex condition need not continue to hold until all facets of larger dimension have been removed.
	\end{remark}
	
	\section{The pure initial skeleton and its skeleta}\label{sec:4}
	
	We first recall a classical result about pure skeleta \cite{ProvanBillera1980}. We include the simplex base case explicitly because a lower pure skeleton of a simplex is usually not itself a simplex.
	
	\begin{lemma}\label{lem:3}
		Let $\Gamma$ be a pure vertex decomposable complex of dimension $p$.  Then $\Gamma^{[k]}$ is vertex decomposable for every $-1\leq k\leq p$.
	\end{lemma}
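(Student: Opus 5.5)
We argue by induction on the number of vertices of $\Gamma$, keeping $k$ arbitrary with $-1\leq k\leq p$. The extreme cases are immediate. For $k=p$ we have $\Gamma^{[p]}=\Gamma$, since $\Gamma$ is pure. For $k=-1$ the complex $\Gamma^{[-1]}=\{\emptyset\}$ is a simplex.

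\emph{Base case: $\Gamma=2^V$ is a simplex.} Here $\Gamma^{[k]}$ is the complete $k$-skeleton of the simplex on $V$. We treat this by a separate induction on $|V|$. If $\Gamma^{[k]}$ is itself a simplex (the case $|V|\leq k+1$), we are done. Otherwise pick any vertex $x\in V$. Then
\[
\del(x)=(2^{V\setminus\{x\}})^{[k]},\qquad \link(x)=(2^{V\setminus\{x\}})^{[k-1]},
\]
and both are vertex decomposable by induction. It remains to check that $x$ is shedding. Take a $(k-1)$-face $G$ of the link. Since $|V|>k+1$, there is a vertex $y\notin G\cup\{x\}$, and $G\cup\{y\}$ is a $k$-face of the deletion containing $G$. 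So no facet of the link is a facet of the deletion.

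\emph{Inductive step: $\Gamma$ is not a simplex.} Let $x$ be a shedding vertex. Then $\del_\Gamma(x)$ is pure of dimension $p$ and $\link_\Gamma(x)$ is pure of dimension $p-1$, and both are vertex decomposable. Fix $k$ with $0\leq k<p$; the cases $k=-1$ and $k=p$ are already done. We use the identities
\[
\del_{\Gamma^{[k]}}(x)=(\del_\Gamma x)^{[k]},\qquad \link_{\Gamma^{[k]}}(x)=(\link_\Gamma x)^{[k-1]}.
\]
These hold because every face of the relevant dimension is already a face of the pure complex $\Gamma^{[k]}$, and because the deletion and link of $\Gamma$ are pure of dimensions $p\geq k$ and $p-1\geq k-1$. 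By induction on the vertex count, both right-hand sides are vertex decomposable. When $k=0$, the link is $\{\emptyset\}$, which is a simplex.

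\emph{Shedding condition (the main obstacle).} Let $G$ be a $(k-1)$-face of $\link_\Gamma(x)$. We must find a $k$-face of $\Gamma$ containing $G$ but not $x$. Since $x$ is shedding in the pure complex $\Gamma$, the face $G$ is not a facet of $\del_\Gamma(x)$. In fact $G$ lies in some facet of $\del_\Gamma(x)$, which has dimension $p>k-1$. Because $\dim G=k-1<p$, that facet strictly contains $G$ and avoids $x$. Choosing one extra vertex from it gives the required $k$-face.

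The delicate point is verifying the two identities for deletion and link. In particular, one must check that a $(k-1)$-face $G$ with $G\cup\{x\}\in\Gamma$ really extends to a $(k-1)$-face of the pure complex $\link_\Gamma(x)$. This follows from the purity of the link, which has dimension $p-1\geq k-1$.
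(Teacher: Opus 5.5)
Your proof is correct and takes essentially the same route as the paper. Both arguments use induction on the number of vertices, a separate induction for the simplex (complete skeleton) case, and the identities $\del_{\Gamma^{[k]}}(x)=(\del_\Gamma(x))^{[k]}$ and $\link_{\Gamma^{[k]}}(x)=(\link_\Gamma(x))^{[k-1]}$. The shedding check is also the same idea: you use that $\del_\Gamma(x)$ is pure of dimension $p$, while the paper extends a $k$-face through a facet $Q$ and uses that $Q\setminus\{x\}$ lies in a facet avoiding $x$.
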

	
	\begin{proof}
		The cases $k=-1$ and $k=p$ follow directly.  We now induct on the number of vertices.  If $\Gamma=2^F$ is a simplex and $k<p$, then $\Gamma^{[k]}$ is the uniform matroid complex consisting of all subsets of $F$ of size at most $k+1$.  Every vertex is shedding; its deletion and link are smaller uniform matroid complexes.  This proves the simplex case by induction on $|F|$.
		
		Now suppose that $\Gamma$ is not a simplex and let $x$ be a shedding vertex in a pure vertex decomposition.  For $k<p$,
		\[
		\del_{\Gamma^{[k]}}(x)=(\del_\Gamma(x))^{[k]},
		\qquad
		\link_{\Gamma^{[k]}}(x)=(\link_\Gamma(x))^{[k-1]}.
		\]
		To see that $x$ is shedding in $\Gamma^{[k]}$, take a $k$-face $A$ containing $x$ and extend it to a facet $Q$ of $\Gamma$.  Since $x$ is shedding in the pure complex $\Gamma$, the face $Q\setminus\{x\}$ lies in a facet not containing $x$.  A $k$-face of that facet can be chosen to contain $A\setminus\{x\}$.  The displayed deletion and link are vertex decomposable by induction, so $\Gamma^{[k]}$ is vertex decomposable.
	\end{proof}
	
	\begin{theorem}\label{thm:3}
		Let $\Delta$ be strongly vertex dismissible and let $b=\indim\Delta$.  Then $\Delta^{[b]}$ is vertex decomposable.  Therefore, $\Delta^{[k]}=\Delta^k$ is vertex decomposable for every $-1\leq k\leq b$.
	\end{theorem}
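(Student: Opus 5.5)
We argue by induction on the number of vertices of $\Delta$, proving that $\Delta^{[b]}$ is vertex decomposable; the second sentence then follows from Lemma~\ref{lem:1} together with Lemma~\ref{lem:3} applied to the pure complex $\Gamma=\Delta^{[b]}$ of dimension $b$, since $(\Delta^{[b]})^{[k]}=\Delta^{[k]}$ for $k\le b$ (every $k$-face of $\Delta$ lies in a $b$-face by the argument of Lemma~\ref{lem:1}). If $\Delta=2^F$ is a simplex, then $\Delta^{[b]}=\Delta$ since $b=\dim F$, and we are done.

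Otherwise let $x$ be a dismissing vertex with $\del_\Delta(x)$ and $\link_\Delta(x)$ strongly vertex dismissible. By Lemma~\ref{lem:2}, $x$ is a shedding vertex of $\Delta^{[b]}$, so it suffices to show that $\del_{\Delta^{[b]}}(x)$ and $\link_{\Delta^{[b]}}(x)$ are vertex decomposable. For the deletion, put $b'=\indim\del_\Delta(x)\ge b$. The $b$-faces of $\Delta$ not containing $x$ are exactly the $b$-faces of $\del_\Delta(x)$, so $\del_{\Delta^{[b]}}(x)=(\del_\Delta(x))^{[b]}$; here I use that $x$ is shedding in $\Delta^{[b]}$, so every $b$-face of the deletion of $\Delta^{[b]}$ is a facet of it (purity). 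By induction $(\del_\Delta(x))^{[b']}$ is pure vertex decomposable of dimension $b'$, so Lemma~\ref{lem:3} gives that its $b$-skeleton $(\del_\Delta(x))^{[b]}$ is vertex decomposable.

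For the link, note $\link_{\Delta^{[b]}}(x)$ is generated by the sets $A\setminus\{x\}$ with $A$ a $b$-face containing $x$, i.e. it equals $(\link_\Delta(x))^{[b-1]}$. The main point is to check $\indim\link_\Delta(x)\ge b-1$: a facet $G$ of the link gives a facet $G\cup\{x\}$ of $\Delta$, of dimension $\ge b$, so $\dim G\ge b-1$. With $c=\indim\link_\Delta(x)\ge b-1$, induction shows $(\link_\Delta(x))^{[c]}$ is pure vertex decomposable of dimension $c$, and Lemma~\ref{lem:3} gives $(\link_\Delta(x))^{[b-1]}$ vertex decomposable (if $b=0$ this is $\{\emptyset\}$, a simplex).

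The step I expect to need the most care is the identification of the pure skeleta of the deletion and link with the deletion and link of $\Delta^{[b]}$, in particular making sure the pure skeleton conventions match (all $k$-faces, including those inside larger faces) and that the relevant skeleton index is at most the initial dimension of each branch so that Lemma~\ref{lem:3} applies. Combining, $x$ is shedding in $\Delta^{[b]}$ with vertex decomposable deletion and link, so $\Delta^{[b]}$ is vertex decomposable.
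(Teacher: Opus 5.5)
Your proposal is correct and follows the paper's proof step for step. The common route is induction on vertices, with Lemma~\ref{lem:2} making $x$ shedding in $\Delta^{[b]}$, the identifications $\del_{\Delta^{[b]}}(x)=(\del_\Delta(x))^{[b]}$ and $\link_{\Delta^{[b]}}(x)=(\link_\Delta(x))^{[b-1]}$, and Lemma~\ref{lem:3} to descend from the branch initial dimensions $b'\ge b$ and $c\ge b-1$. You also justify the two skeleton identifications, which the paper states without proof, and you correctly note that the deletion identity uses the dismissing/shedding property.
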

	
	\begin{proof}
		We first prove the result for the pure initial skeleton by induction on the number of vertices.  If $\Delta$ is a simplex, then $\Delta^{[b]}=\Delta$.  Otherwise let $x$ begin a strong dismissal, and put
		\[
		\Delta'=\del_\Delta(x),\qquad \Lambda=\link_\Delta(x),
		\]
		with $b_{\Delta'}=\indim\Delta'$ and $b_\Lambda=\indim\Lambda$.  The dismissing condition gives $b_{\Delta'}\geq b$, and every facet of $\Lambda$ comes from a facet of $\Delta$ containing $x$, so $b_\Lambda\geq b-1$.  By Lemma~\ref{lem:2}, $x$ is a shedding vertex of $\Delta^{[b]}$.
		
		The induction hypothesis gives vertex decomposability of $(\Delta')^{[b_{\Delta'}]}$ and $\Lambda^{[b_\Lambda]}$.  Since $b\leq b_{\Delta'}$ and $b-1\leq b_\Lambda$, Lemma~\ref{lem:3} gives vertex decomposability of $(\Delta')^{[b]}$ and $\Lambda^{[b-1]}$.  Also,
		\[
		\del_{\Delta^{[b]}}(x)=(\Delta')^{[b]},
		\qquad
		\link_{\Delta^{[b]}}(x)=\Lambda^{[b-1]}.
		\]
		Thus $x$ begins a vertex decomposition of $\Delta^{[b]}$. For $-1\leq k\leq b$, every $k$-face of $\Delta$ lies in a $b$-face, so
		\[
		\Delta^{[k]}=(\Delta^{[b]})^{[k]}=\Delta^k.
		\]
		The lower rank conclusion now follows from Lemma~\ref{lem:3} applied to the pure vertex decomposable complex $\Delta^{[b]}$.
	\end{proof}
	
	\begin{remark}\label{rem:2}
		The main implication in Theorem~\ref{thm:3} is that strong vertex dismissibility implies that the pure initial skeleton is vertex decomposable. The conclusion for lower ranks then follows from the classical fact that pure skeleta of smaller dimension of a vertex decomposable complex are vertex decomposable; see, for example, \cite[Lemma~3.10]{Woodroofe2011}.  In particular, the same lower rank conclusion holds under the weaker hypothesis that $\Delta^{[b]}$ is vertex decomposable.
	\end{remark}
	
	\subsection{Vertex dismissibility, scalability, and initially Cohen--Macaulayness}
	
	\begin{definition}\label{def:4}
		Let $b=\indim\Delta$.
		\begin{enumerate}[label=\textup{(\roman*)}]
			\item $\Delta$ is \emph{vertex dismissible} if $\Delta^{[b]}$ is vertex decomposable.
			\item $\Delta$ is \emph{scalable} if $\Delta^{[b]}$ is shellable.
			\item Following \cite{Namiq2026ICM}, $\Delta$ is \emph{initially Cohen--Macaulay over $\K$} if its Stanley--Reisner ring $\K[\Delta]$ is initially Cohen--Macaulay.  By \cite[Proposition~4.6]{Namiq2026ICM} and Lemma~\ref{lem:1}, this is equivalent to $\Delta^{[b]}$ being Cohen--Macaulay over $\K$.
		\end{enumerate}
	\end{definition}
	
	The term \emph{scalable} means that the pure initial skeleton is shellable. Thus scalability agrees with ordinary shellability for a pure complex. The notion of an \emph{initially Cohen--Macaulay} module was introduced in \cite{Namiq2026ICM} by the condition $\depth N=\indim N$, and the same work defines the corresponding property for simplicial complexes through their Stanley--Reisner rings. Proposition~4.6 of that paper identifies the property with Cohen--Macaulayness of the initial dimension skeleton. By Lemma~\ref{lem:1}, this skeleton is $\Delta^{[b]}$ when $b=\indim\Delta$. Corollary~\ref{cor:12} gives the equivalent depth and Alexander dual formulations.
	
	\begin{corollary}\label{cor:2}
		For every field $\K$,
		\[
		\begin{split}
			\text{vertex decomposable}
			&\Longrightarrow \text{strongly vertex dismissible}
			\Longrightarrow \text{vertex dismissible}\Longrightarrow \text{scalable}\\
			&\Longrightarrow \text{initially Cohen--Macaulay over }\K.
		\end{split}
		\]
		For pure complexes, the first three properties are equivalent, scalability is ordinary shellability, and initially Cohen--Macaulayness is ordinary Cohen--Macaulayness.
	\end{corollary}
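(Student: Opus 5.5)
The chain of implications is assembled link by link from earlier results, together with standard facts about pure complexes.

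\emph{Vertex decomposable implies strongly vertex dismissible.} This is exactly Proposition~\ref{prop:1}.

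\emph{Strongly vertex dismissible implies vertex dismissible.} This is the first assertion of Theorem~\ref{thm:3}: if $\Delta$ is strongly vertex dismissible, then $\Delta^{[b]}$ is vertex decomposable, which is Definition~\ref{def:4}(i).

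\emph{Vertex dismissible implies scalable, and scalable implies initially Cohen--Macaulay.} Since $\Delta^{[b]}$ is pure, both steps reduce to classical statements about pure complexes, applied to $\Delta^{[b]}$.
\begin{itemize}
\item The Provan--Billera theorem (pure vertex decomposable implies pure shellable) turns vertex dismissibility into scalability.
\item The standard fact that pure shellable complexes are Cohen--Macaulay over every field, for instance via Reisner's criterion or the Stanley--Reisner ring argument, turns scalability into Cohen--Macaulayness of $\Delta^{[b]}$ over $\K$.
\end{itemize}
By Definition~\ref{def:4}(iii), Cohen--Macaulayness of $\Delta^{[b]}$ over $\K$ is initially Cohen--Macaulayness of $\Delta$ over $\K$. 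The field $\K$ is arbitrary here, which gives the \emph{for every field} clause.

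\emph{The pure case.} Suppose $\Delta$ is pure of dimension $p$. Then $b=p$ and $\Delta^{[b]}=\Delta$, because every $b$-face already lies in $\Delta$ and every facet has dimension $b$.
\begin{itemize}
\item Vertex dismissibility is literally vertex decomposability of $\Delta$.
\item Scalability is shellability of $\Delta$.
\item Initially Cohen--Macaulay means $\Delta$ is Cohen--Macaulay.
\end{itemize}
Proposition~\ref{prop:2} shows that strong vertex dismissibility coincides with vertex decomposability. Hence the first three properties are equivalent.

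The only point needing care is that the classical pure results are applied to $\Delta^{[b]}$ and not to $\Delta$. This is legitimate since $\Delta^{[b]}$ is pure by construction. Also, for $b=-1$ we have $\Delta=\{\emptyset\}$, a simplex, and every property holds trivially. I expect no real obstacle: the substantive content is Theorem~\ref{thm:3}, which is already established.
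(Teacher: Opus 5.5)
Your proposal is correct and follows the paper's own proof. It uses Proposition~\ref{prop:1} for the first implication, Theorem~\ref{thm:3} at $k=b$ for the second, and the classical pure hierarchy applied to the pure complex $\Delta^{[b]}$ for the last two, with the pure case coming from $\Delta^{[b]}=\Delta$ together with Proposition~\ref{prop:2}.
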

	
	\begin{proof}
		The first implication follows from Proposition~\ref{prop:1}.  The second implication follows from Theorem~\ref{thm:3} at $k=b$.  The remaining implications follow from the classical pure hierarchy.  The statement for pure complexes follows from Proposition~\ref{prop:2}.
	\end{proof}
	
	\begin{example}\label{ex:3}
		Let $W=\{a,b,c,d,e,f,g\}$ and
		\begin{align*}
			\F(\Delta_1)&=\{abcd,cdef,acg,aef,abe,afg,bdg,beg,ceg,dfg\},\\
			\F(\Delta_2)&=\{abcd,cdef,acg,aef,abe,abf,bfg,bdg,beg,ceg,dfg\}.
		\end{align*}
		Thus $\Delta_2$ is obtained from $\Delta_1$ by replacing the facet $afg$ with the two facets $abf$ and $bfg$. Put $\Gamma_j=\Delta_j^{[2]}$ for $j=1,2$. The facet lists define the complexes. Figure~\ref{fig:1} provides a visual representation, and repeated labels denote the same vertex.
		
		\begin{figure}[h]
			\centering
			\tikzset{
				wire/.style={thick,black!80},
				bead/.style={circle,fill=black!70,inner sep=0pt,minimum size=4.5pt},
				label node/.style={font=\scriptsize,inner sep=1.5pt,text=black}
			}
			\begin{subfigure}[b]{0.48\textwidth}
				\centering
				\begin{tikzpicture}[scale=0.55,line join=round,line cap=round]
					\coordinate (c) at (-1.732,0); \coordinate (d) at (1.732,0);
					\coordinate (g) at (0,3); \coordinate (b) at (0,1);
					\coordinate (f) at (0,-3); \coordinate (e) at (0,-1);
					\coordinate (a) at (-4,1); \coordinate (h) at (5.3,0);
					\coordinate (i) at (1,5); \coordinate (j) at (-1.5,3);
					\fill[black!18] (a)--(c)--(f)--cycle;
					\fill[black!22] (a)--(c)--(g)--cycle;
					\fill[black!13] (a)--(g)--(j)--cycle;
					\fill[black!10] (i)--(j)--(g)--cycle;
					\fill[black!15] (i)--(b)--(g)--cycle;
					\fill[black!18] (h)--(i)--(b)--cycle;
					\fill[black!13] (h)--(b)--(d)--cycle;
					\fill[black!13] (h)--(e)--(d)--cycle;
					\fill[black!35] (b)--(c)--(g)--cycle;
					\fill[black!30] (b)--(d)--(g)--cycle;
					\fill[black!25] (b)--(c)--(d)--cycle;
					\fill[black!35] (e)--(c)--(f)--cycle;
					\fill[black!30] (e)--(d)--(f)--cycle;
					\fill[black!25] (e)--(c)--(d)--cycle;
					\draw[wire] (a)--(j)--(i)--(h);
					\draw[wire] (a)--(g)--(i);
					\draw[wire] (a)--(c)--(g)--(d)--(h);
					\draw[wire] (a)--(f)--(d);
					\draw[wire] (i)--(b)--(h);
					\draw[wire] (h)--(e);
					\draw[wire] (c)--(d);
					\draw[wire] (c)--(f);
					\draw[wire] (e)--(c);
					\foreach \v in {a,b,c,d,e,f,g,h,i,j}{\node[bead] at (\v) {};}
					\node[label node,above=2pt] at (g) {$e$};
					\node[label node,below=2pt] at (f) {$a$};
					\node[label node,left=3pt] at (c) {$c$};
					\node[label node,right=3pt] at (d) {$d$};
					\node[label node,right=2pt] at (b) {$f$};
					\node[label node,right=2pt] at (e) {$b$};
					\node[label node,left=3pt] at (a) {$g$};
					\node[label node,right=3pt] at (h) {$g$};
					\node[label node,above=3pt] at (i) {$a$};
					\node[label node,left=4pt] at (j) {$b$};
				\end{tikzpicture}
				\caption{$\Delta_1$ is initially Cohen--Macaulay when
					$\operatorname{char}\K\ne2$, but is not scalable.}
			\end{subfigure}
			\hfill
			\begin{subfigure}[b]{0.48\textwidth}
				\centering
				\begin{tikzpicture}[scale=0.55,line join=round,line cap=round]
					\coordinate (c) at (-1.732,0); \coordinate (d) at (1.732,0);
					\coordinate (g) at (0,3); \coordinate (b) at (0,1);
					\coordinate (f) at (0,-3); \coordinate (e) at (0,-1);
					\coordinate (a) at (-4,1); \coordinate (h) at (5.3,0);
					\coordinate (i) at (1,5); \coordinate (j) at (-1.5,3);
					\coordinate (k) at (2.75,3);
					\fill[black!18] (a)--(c)--(f)--cycle;
					\fill[black!22] (a)--(c)--(g)--cycle;
					\fill[black!13] (a)--(g)--(j)--cycle;
					\fill[black!10] (i)--(j)--(g)--cycle;
					\fill[black!15] (i)--(b)--(g)--cycle;
					\fill[black!18] (i)--(b)--(k)--cycle;
					\fill[black!18] (h)--(b)--(k)--cycle;
					\fill[black!13] (h)--(b)--(d)--cycle;
					\fill[black!13] (h)--(e)--(d)--cycle;
					\fill[black!35] (b)--(c)--(g)--cycle;
					\fill[black!30] (b)--(d)--(g)--cycle;
					\fill[black!25] (b)--(c)--(d)--cycle;
					\fill[black!35] (e)--(c)--(f)--cycle;
					\fill[black!30] (e)--(d)--(f)--cycle;
					\fill[black!25] (e)--(c)--(d)--cycle;
					\draw[wire] (a)--(j)--(i)--(k)--(h);
					\draw[wire] (a)--(g)--(i);
					\draw[wire] (a)--(c)--(g)--(d)--(h);
					\draw[wire] (a)--(f)--(d);
					\draw[wire] (i)--(b)--(k);
					\draw[wire] (b)--(h);
					\draw[wire] (h)--(e);
					\draw[wire] (c)--(d);
					\draw[wire] (c)--(f);
					\draw[wire] (e)--(c);
					\foreach \v in {a,b,c,d,e,f,g,h,i,j,k}{\node[bead] at (\v) {};}
					\node[label node,above=2pt] at (g) {$e$};
					\node[label node,below=2pt] at (f) {$a$};
					\node[label node,left=3pt] at (c) {$c$};
					\node[label node,right=3pt] at (d) {$d$};
					\node[label node,right=2pt] at (b) {$f$};
					\node[label node,right=2pt] at (e) {$b$};
					\node[label node,left=3pt] at (a) {$g$};
					\node[label node,right=3pt] at (h) {$g$};
					\node[label node,above=3pt] at (i) {$a$};
					\node[label node,left=4pt] at (j) {$b$};
					\node[label node,right=4pt] at (k) {$b$};
				\end{tikzpicture}
				\caption{$\Delta_2$ is scalable, but is not vertex dismissible.\\~}
			\end{subfigure}
			\caption{Strictness of vertex dismissibility $\Longrightarrow$ scalability $\Longrightarrow$ initially Cohen--Macaulayness.}
			\label{fig:1}
		\end{figure}
		
		An integral boundary computation gives
		\[
		\widetilde H_1(\Gamma_1;\mathbb Z)
		\cong \mathbb Z/2\mathbb Z,
		\]
		while $\Gamma_1$ and all of its vertex links are connected. By Reisner's criterion \cite{Reisner1976}, together with $\Gamma_1=\Delta_1^{[2]}$, it follows that $\Delta_1$ is initially Cohen--Macaulay exactly when $\operatorname{char}\K\ne2$. In particular, $\Delta_1$ is not initially Cohen--Macaulay in characteristic two. Since scalability implies the initially Cohen--Macaulay property over every field, $\Delta_1$ is not scalable.
		
		For $\Delta_2$, the following facet order of $\Gamma_2$ certifies scalability:
		\[
		\begin{aligned}
			&beg,ceg,abe,abf,aef,bfg,cef,bdg,dfg,\\
			&cdf,bcd,abc,def,acd,abd,acg,cde.
		\end{aligned}
		\]
		We next show that $\Delta_2$ is not vertex dismissible, equivalently, that the pure complex $\Gamma_2$ is not vertex decomposable. The vertex $c$ is not a shedding vertex of $\Gamma_2$: the edge $ag$ is a facet of both $\link_{\Gamma_2}(c)$ and $\del_{\Gamma_2}(c)$, so the shedding condition fails. For $v\in\{a,b,d,e,f,g\}$, the deletion is pure and satisfies
		\[
		\begin{array}{c|cccccc}
			v&a&b&d&e&f&g\\ \hline
			\dim_{\mathbb Q}\widetilde H_1(\del_{\Gamma_2}(v);\mathbb Q)
			&1&2&1&1&1&1.
		\end{array}
		\]
		Hence every one of these deletions is not Cohen--Macaulay over $\mathbb Q$, and therefore is not vertex decomposable. Consequently no vertex can start a vertex decomposition of $\Gamma_2$. Thus $\Gamma_2$ is not vertex decomposable and $\Delta_2$ is not vertex dismissible. We have therefore shown that both implications in Corollary~\ref{cor:2},
		\[
		\text{vertex dismissibility}
		\Longrightarrow
		\text{scalability}
		\Longrightarrow
		\text{initially Cohen--Macaulay},
		\]
		are strict when $\operatorname{char}\K\ne2$, while the first implication is strict over every field.
		
		If $L_j=I_{\Delta_j^{\vee}}$, then Theorem~\ref{thm:7} shows that the corresponding implications among vertex divisibility, degree quotients, and degree resolutions are also strict, with the same field qualification, for the equigenerated ideals $(L_j)_4$; in one degree these reduce to vertex splittability, linear quotients, and linear resolutions, respectively.
	\end{example}
	
	\begin{proposition}\label{prop:3}
		Let $b=\indim\Delta$ and fix a field $\K$.  For each of the properties
		\[
		\mathcal P\in\{\text{vertex decomposable},\ \text{shellable},\
		\text{Cohen--Macaulay over }\K\},
		\]
		the following are equivalent:
		\begin{enumerate}[label=\textup{(\roman*)}]
			\item $\Delta^{[b]}$ has property $\mathcal P$;
			\item $\Delta^k=\Delta^{[k]}$ has property $\mathcal P$ for every $-1\leq k\leq b$.
		\end{enumerate}
	\end{proposition}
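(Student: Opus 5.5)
The implication (ii)$\Rightarrow$(i) is immediate: take $k=b$ and note that $\Delta^{b}=\Delta^{[b]}$ by Lemma~\ref{lem:1}. So the plan concentrates on (i)$\Rightarrow$(ii). First I record, exactly as in the last step of the proof of Theorem~\ref{thm:3}, that for $-1\leq k\leq b$ every $k$-face of $\Delta$ lies in a $b$-face. This gives
\[
\Delta^{[k]}=(\Delta^{[b]})^{[k]}=\Delta^k ,
\]
where the second equality is Lemma~\ref{lem:1}. The statement therefore reduces to a classical fact about a single pure complex $\Gamma=\Delta^{[b]}$ of dimension $b$: if $\Gamma$ has property $\mathcal P$, then so does every pure skeleton $\Gamma^{[k]}$ with $k\leq b$. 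The degenerate case $k=-1$ gives $\{\emptyset\}$, which has every property trivially, so I may assume $k\geq 0$.

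I treat the three properties separately.

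\emph{Vertex decomposable.} This is exactly Lemma~\ref{lem:3}.

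\emph{Shellable.} I would cite the classical result of Bj\"orner that skeleta of shellable complexes are shellable. To sketch it, let $F_1,\dots,F_m$ be a shelling of $\Gamma$, and order the $k$-faces as follows. First list the $k$-faces of $F_1$ in any shelling order of the boundary-type skeleton $(2^{F_1})^{[k]}$; such an order exists because this uniform matroid complex is shellable. Then, for each $j$ in turn, list the $k$-faces of $F_j$ not already listed, beginning with those containing the restriction face $R_j$, namely the minimal new face of $F_j$, in a suitable order. Checking that this order is a shelling is the main obstacle, and I would not reprove it in full. Instead I would invoke \cite{BjornerWachs1996}, which shows that the skeleta of a (nonpure) shellable complex are shellable. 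Alternatively, since Theorem~\ref{thm:7} later identifies shellability with linear quotients of the dual, I could pass through the fact that the degree truncation of an ideal with linear quotients again has linear quotients.

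\emph{Cohen--Macaulay over $\K$.} I use Reisner's criterion \cite{Reisner1976}. For a face $\sigma$ of $\Gamma^{[k]}$, one has $\link_{\Gamma^{[k]}}\sigma=(\link_\Gamma\sigma)^{[k-|\sigma|]}$, and this is the pure skeleton of a Cohen--Macaulay link. The reduced homology of the $r$-skeleton of a complex agrees with that of the complex below degree $r$, and in top degree $r$ it only gains cycles. Hence the homology of $\link_{\Gamma^{[k]}}\sigma$ vanishes below its dimension, because the homology of $\link_\Gamma\sigma$ vanishes below $\dim\link_\Gamma\sigma\geq k-|\sigma|$. This gives Cohen--Macaulayness of $\Gamma^{[k]}$ over $\K$ for every $k\leq b$.
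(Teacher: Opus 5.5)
Your proposal is correct and follows the paper's proof: (ii)$\Rightarrow$(i) by taking $k=b$; for (i)$\Rightarrow$(ii), reduce to the pure complex $\Delta^{[b]}$ via Lemma~\ref{lem:1}, use Lemma~\ref{lem:3} for vertex decomposability, and cite the standard fact that pure skeleta of shellable complexes are shellable. Your Reisner-criterion argument for the Cohen--Macaulay case is valid. It uses $\link_{\Gamma^{k}}\sigma=(\link_\Gamma\sigma)^{k-|\sigma|}$ together with the fact that passing to a skeleton preserves reduced homology below its dimension, and so spells out a step the paper only cites as standard.
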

	
	\begin{proof}
		Taking $k=b$ proves one direction.  Lemma~\ref{lem:3} proves the other direction for vertex decomposability.  Lower pure skeleta of a pure shellable complex are shellable, and lower skeleta of a pure Cohen--Macaulay complex are Cohen--Macaulay; these are standard rank selection facts in the pure theory \cite{BjornerWachs1996,Namiq2026Skeletons,ProvanBillera1980}. Use Lemma~\ref{lem:1} to identify the ordinary and pure skeleta.
	\end{proof}
	
	\subsection{Links and joins of the pure initial skeleton}
	
	\begin{proposition}\label{prop:4}
		Let $b=\indim\Delta$, and let $\sigma$ be contained in a $b$-dimensional facet of $\Delta$.  If $\Delta^{[b]}$ is vertex decomposable, shellable, or Cohen--Macaulay over $\K$, then the pure initial skeleton of $\link_\Delta(\sigma)$ has the corresponding property.
	\end{proposition}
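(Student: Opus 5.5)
Let $\Lambda=\link_\Delta(\sigma)$ and $s=|\sigma|$. The plan is to show that the pure initial skeleton of $\Lambda$ equals $\link_{\Delta^{[b]}}(\sigma)$. Once this is known, we can invoke the classical fact that links of faces in a pure vertex decomposable, shellable, or Cohen--Macaulay complex keep the same property: for vertex decomposability and shellability see \cite{ProvanBillera1980,BjornerWachs1996}, and for Cohen--Macaulayness over $\K$ this is Reisner's criterion \cite{Reisner1976}, since links of links are links.

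The first step computes $\indim\Lambda$. Every facet of $\Lambda$ has the form $F\setminus\sigma$ with $F$ a facet of $\Delta$ containing $\sigma$, so $\dim(F\setminus\sigma)=\dim F-s\geq b-s$. By hypothesis some $b$-dimensional facet $F_0\supseteq\sigma$ exists, and then $F_0\setminus\sigma$ is a facet of $\Lambda$ of dimension $b-s$. Hence $\indim\Lambda=b-s$.

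The second step proves $\Lambda^{[b-s]}=\link_{\Delta^{[b]}}(\sigma)$ by comparing facets. Both complexes are generated by sets $G$ with $\dim G=b-s$, $G\cap\sigma=\emptyset$, and $G\cup\sigma\in\Delta$. On the left these are the $(b-s)$-faces of $\Lambda$. On the right, $G\cup\sigma$ is then a $b$-face of $\Delta$, hence a facet of $\Delta^{[b]}$, so $G$ is a facet of the link. Conversely, since $\Delta^{[b]}$ is pure of dimension $b$ and $\sigma$ lies in a $b$-face, its link is pure of dimension $b-s$, and every facet $G$ of it satisfies the conditions above. Both complexes are therefore generated by the same set family. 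The only place the hypothesis on $\sigma$ is used is in guaranteeing that the initial dimension of the link is exactly $b-s$. Without it, $\indim\Lambda$ could exceed $b-s$, and the statement would concern a different skeleton.

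The final step applies the classical link facts to the pure complex $\Delta^{[b]}$ and the face $\sigma$. For vertex decomposability, I would cite Provan--Billera, or else induct on $|\sigma|$ using the vertex case. That case is the main obstacle to do by hand: for a vertex $v$ and a shedding vertex $x\neq v$, one checks that $\link(x)$ and $\del(x)$ commute with $\link(v)$ and that $x$ stays shedding in $\link(v)$, with the subcase where $x\notin\link(v)$ needing separate care. For that reason I intend to cite the standard result rather than reprove it. Shellability of links of pure shellable complexes is standard, obtained by restricting the shelling order to facets containing $\sigma$, and Cohen--Macaulayness passes to links by Reisner's criterion.
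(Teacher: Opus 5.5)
Your proof is correct and follows the paper's argument: you compute $\indim\link_\Delta(\sigma)=b-|\sigma|$, identify $(\link_\Delta(\sigma))^{[b-|\sigma|]}$ with $\link_{\Delta^{[b]}}(\sigma)$, and invoke the standard fact that links of pure vertex decomposable, shellable, or Cohen--Macaulay complexes keep the property. The only difference is that you write out the facet-by-facet verification of this identification, which the paper states without detail.
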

	
	\begin{proof}
		Because $\sigma$ lies in a $b$-dimensional facet,
		\[
		\indim\link_\Delta(\sigma)=b-|\sigma|
		\]
		and
		\[
		\bigl(\link_\Delta(\sigma)\bigr)^{[b-|\sigma|]}
		=\link_{\Delta^{[b]}}(\sigma).
		\]
		Each of these three properties is preserved under links of a pure complex.
	\end{proof}
	
	\begin{proposition}\label{prop:5}
		Let $\Delta$ and $\Gamma$ be finite complexes on disjoint vertex sets.  If the pure initial skeleton of $\Delta*\Gamma$ is vertex decomposable, shellable, or Cohen--Macaulay over $\K$, then the pure initial skeleta of both factors have the corresponding property.
	\end{proposition}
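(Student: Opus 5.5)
The plan is to reduce the statement to Proposition~\ref{prop:4} by taking links of a face coming from the other factor. Put $b_1=\indim\Delta$ and $b_2=\indim\Gamma$, so that
\[
b=\indim(\Delta*\Gamma)=b_1+b_2+1 .
\]
Choose a $b_2$-dimensional facet $G$ of $\Gamma$. Then $G$ is contained in the facet $F\cup G$ of $\Delta*\Gamma$ for any $b_1$-dimensional facet $F$ of $\Delta$. That facet has dimension $b_1+b_2+1=b$, so $G$ lies in a $b$-dimensional facet of the join, which is exactly the hypothesis of Proposition~\ref{prop:4} with $\sigma=G$.

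The next step is to identify the link. In a join, for a face $G\in\Gamma$ we have
\[
\link_{\Delta*\Gamma}(G)=\Delta*\link_\Gamma(G).
\]
Because $G$ is a facet of $\Gamma$, $\link_\Gamma(G)=\{\emptyset\}$, and hence $\link_{\Delta*\Gamma}(G)=\Delta$. As a check on dimensions, Proposition~\ref{prop:4} gives
\[
\indim\link(G)=b-|G|=b-(b_2+1)=b_1,
\]
which is consistent. Proposition~\ref{prop:4} then yields that the pure initial skeleton $\Delta^{[b_1]}$ has the same property $\mathcal P$ as $(\Delta*\Gamma)^{[b]}$. Running the same argument with $\sigma$ a $b_1$-dimensional facet of $\Delta$ handles $\Gamma$.

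The main obstacle is checking that the link formula and the facet-containment hypothesis really hold. The one subtle case is when $\Gamma=\{\emptyset\}$ or $\Delta=\{\emptyset\}$ (so $b_2=-1$ or $b_1=-1$). Then $G=\emptyset$ and the statement is trivial, since the join equals the other factor. It also matters that $\sigma$ is required only to lie in some $b$-dimensional facet, not to be a $b$-face itself, and the argument above supplies exactly that. Everything else is bookkeeping with the formula for $\indim$ of a join stated before Theorem~\ref{thm:1}.
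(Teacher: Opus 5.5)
Your proof is correct and is essentially the paper's argument. The paper likewise takes the link of a $b_\Gamma$-dimensional facet $F_\Gamma$ of $\Gamma$: it computes $\link_\Omega(F_\Gamma)=\Delta^{[b_\Delta]}$ directly inside $\Omega=(\Delta*\Gamma)^{[b]}$ and uses that the three properties pass to links, whereas you obtain the same identification through Proposition~\ref{prop:4} and the join formula $\link_{\Delta*\Gamma}(G)=\Delta*\link_\Gamma(G)=\Delta$.
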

	
	\begin{proof}
		Set $b_\Delta=\indim\Delta$, $b_\Gamma=\indim\Gamma$, and
		\[
		\Omega=(\Delta*\Gamma)^{[b_\Delta+b_\Gamma+1]}.
		\]
		Choose a $b_\Gamma$-dimensional facet $F_\Gamma$ of $\Gamma$.  Since $F_\Gamma$ is maximal in $\Gamma$,
		\[
		\link_\Omega(F_\Gamma)=\Delta^{[b_\Delta]}.
		\]
		The three properties are inherited by links.  Exchanging the two factors gives the result for $\Gamma$.
	\end{proof}
	
	\begin{remark}\label{rem:3}
		Proposition~\ref{prop:5} does not have a converse based only on the pure initial skeleta of the two factors. The pure initial skeleton of a join can contain faces whose ranks are distributed between the two factors in several ways, so it need not equal the join of the two pure initial skeleta.  By contrast, Theorem~\ref{thm:1} gives an equivalence for strong vertex dismissibility.
	\end{remark}
	
	\subsection{The pure initial skeleton does not determine the recursion}
	
	\begin{lemma}\label{lem:4}
		Every connected pure one-dimensional simplicial complex is vertex decomposable.
	\end{lemma}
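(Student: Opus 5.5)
We argue by induction on the number of vertices of a connected pure one-dimensional complex $\Gamma$, i.e.\ a connected graph with at least one edge, viewed as a simplicial complex whose facets are its edges. If $\Gamma$ has exactly one edge, it is a simplex and hence vertex decomposable. Otherwise the plan is to find a vertex $x$ such that $\del_\Gamma(x)$ is again a connected graph with at least one edge and no isolated vertices. This will make $\del_\Gamma(x)$ pure of dimension one. The link $\link_\Gamma(x)$ is then the discrete set of neighbours of $x$, which is a pure zero-dimensional complex. Every zero-dimensional complex is vertex decomposable: it is a simplex if it has one point, and otherwise any vertex is shedding, with link $\{\emptyset\}$ and a deletion that is again zero-dimensional.

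The choice of $x$ uses a spanning tree $T$ of the graph $\Gamma$. Since $\Gamma$ has at least two edges, it has at least three vertices. Let $x$ be a leaf of $T$. Then $\Gamma-x$ is connected, because $T-x$ is a spanning tree of it. It has at least two vertices, so being connected it has at least one edge and no isolated vertex. Hence $\del_\Gamma(x)=\Gamma-x$ is a connected pure one-dimensional complex on fewer vertices, and it is vertex decomposable by induction.

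It remains to check the shedding condition: no facet of $\link_\Gamma(x)$ is a facet of $\del_\Gamma(x)$. The facets of the link are the singletons $\{y\}$ with $y$ a neighbour of $x$. Such a $y$ is a vertex of the connected graph $\Gamma-x$, which has at least two vertices, so $y$ lies on an edge of $\Gamma-x$. Thus $\{y\}$ is not a facet of the deletion, and $x$ is a shedding vertex. This verification is the only point requiring care. It is exactly why $x$ must be chosen so that $\Gamma-x$ stays connected and nontrivial, since otherwise deletion could leave isolated vertices or break purity. The leaf-of-a-spanning-tree choice handles this cleanly.
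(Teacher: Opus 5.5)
Your proof is correct and follows essentially the same route as the paper. The paper picks a non-cut vertex $x$, notes that $\Gamma-x$ stays connected with at least two vertices so no neighbour of $x$ becomes isolated, and concludes by induction. Your leaf of a spanning tree is simply an explicit way to produce such a non-cut vertex. You also spell out why the zero-dimensional link is vertex decomposable, which the paper leaves implicit.
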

	
	\begin{proof}
		Induct on the number of vertices.  A single edge is a simplex.  Otherwise choose a vertex $x$ that is not a cut vertex.  The deletion remains connected and has at least two vertices.  Hence no neighbor of $x$ becomes isolated in the deletion, which is exactly the shedding condition in a pure graph.  The link is zero-dimensional and the deletion is vertex decomposable by induction.
	\end{proof}
	
	\begin{example}\label{ex:4}
		Let $\Gamma=\langle dg,abef,aceg,bcfg\rangle$. Its initial dimension is $1$, and $\Gamma^{[1]}$ is a connected graph. Thus $\Gamma$ is vertex dismissible by Lemma~\ref{lem:4}.
		
		No vertex can begin a strong dismissal.  The vertex $g$ is not dismissing, because $\del_\Gamma(g)$ has the singleton facet $\{d\}$.  For $v\in\{a,b,c,e,f\}$, the link has two two-dimensional facets that meet in exactly one vertex:
		\[
		\begin{array}{c|c}
			v & \F(\link_\Gamma(v))\\ \hline
			a & bef,\ ceg\\
			b & aef,\ cfg\\
			c & aeg,\ bfg\\
			e & abf,\ acg\\
			f & abe,\ bcg
		\end{array}
		\]
		In each case the two facets share no edge, so the facet-ridge graph is disconnected.  Hence the pure two-dimensional link is not shellable, and therefore it is neither vertex decomposable nor strongly vertex dismissible by Proposition~\ref{prop:2}.  The remaining vertex $d$ is dismissing, but
		\[
		\del_\Gamma(d)=\langle abef,aceg,bcfg\rangle
		\]
		is pure of dimension $3$ and its three facets meet pairwise only in edges. Its facet-ridge graph is disconnected; hence it is not shellable and therefore not vertex decomposable or strongly vertex dismissible.  Thus $\Gamma$ is not strongly vertex dismissible.
	\end{example}
	
	\section{Equivalence results for selected classes}
	\label{sec:5}
	
	The converses in Corollary~\ref{cor:2} fail in general.  This section gives three classes in which several of the properties are equivalent.  We first treat complexes of initial dimension at most one, then quasi-forests, and finally independence complexes of pseudoforests.
	
	Let $b=\indim\Delta$.  Following \cite{Namiq2026ICM}, we say that $\Delta$ is \emph{weakly connected} if the facet-ridge graph of the pure initial skeleton $\Delta^{[b]}$ is connected.  Equivalently, the graph with vertex set $\F(\Delta)$ in which two facets $F_1$ and $F_2$ are adjacent when
	\[
	|F_1\cap F_2|\geq b
	\]
	is connected.  For $b=0$, this convention makes every nonempty pure zero-dimensional complex weakly connected.
	
	\subsection{Initial dimensions zero and one}
	\label{subsec:1}
	
	\begin{theorem}\label{thm:4}
		Let $\Delta$ be a simplicial complex with $\indim\Delta\leq1$, and put $b=\indim\Delta$.  The following are equivalent:
		\begin{enumerate}[label=\textup{(\roman*)}]
			\item $\Delta$ is weakly connected;
			\item $\Delta$ is vertex dismissible;
			\item $\Delta$ is scalable;
			\item $\Delta$ is initially Cohen--Macaulay over every field;
			\item $\Delta$ is initially Cohen--Macaulay over some field.
		\end{enumerate}
	\end{theorem}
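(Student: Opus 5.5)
Since $\Delta^{[b]}$ is pure, the plan is to run the cycle
(ii)$\Rightarrow$(iii)$\Rightarrow$(iv)$\Rightarrow$(v)$\Rightarrow$(i)$\Rightarrow$(ii).
The first three implications come from Corollary~\ref{cor:2} and the classical pure hierarchy: vertex decomposable implies shellable implies Cohen--Macaulay over every field. The step (iv)$\Rightarrow$(v) is trivial. So the real content is in two places: (v)$\Rightarrow$(i), and (i)$\Rightarrow$(ii).

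Case $b=0$. Here $\Delta^{[0]}$ is the zero-dimensional complex on the vertex set $V(\Delta)$. It is nonempty because $\Delta$ has a $0$-dimensional facet. By the stated convention, every such complex is weakly connected, so (i) always holds. A pure zero-dimensional complex is a disjoint union of points. It is vertex decomposable: delete points one at a time; the link is $\{\emptyset\}$, and the shedding condition holds as long as at least two points remain. So (ii) always holds too, and all five conditions are true. If $\Delta$ is a single vertex, it is a simplex and the claim is immediate.

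Case $b=1$. Now $\Delta^{[1]}$ is the pure graph formed by all edges of $\Delta$. Every vertex lies in some edge, since every facet has dimension at least $1$. Two edges are adjacent in the facet-ridge graph exactly when they share a vertex. Hence $\Delta$ is weakly connected if and only if the graph $\Delta^{[1]}$ is connected.
\begin{itemize}[label=--]
\item For (i)$\Rightarrow$(ii), I would apply Lemma~\ref{lem:4} directly: a connected pure one-dimensional complex is vertex decomposable.
\item For (v)$\Rightarrow$(i), suppose $\Delta^{[1]}$ is Cohen--Macaulay over some field $\K$. By Reisner's criterion applied to the empty face, $\widetilde H_0(\Delta^{[1]};\K)=0$, because $\dim\Delta^{[1]}=1>0$. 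Equivalently, one can invoke the standard fact that a Cohen--Macaulay complex of positive dimension is connected. Therefore the graph is connected, and since no vertex is isolated, its edges form a connected facet-ridge graph.
\end{itemize}

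The main obstacle is bookkeeping rather than substance. I would check that the convention-level claims for $b=0$ (weak connectedness and vertex decomposability of a set of points) are used consistently. I would also check that for $b=1$ connectivity of the underlying graph really translates into connectivity of the facet-ridge graph, which requires that $\Delta^{[1]}$ has no isolated vertices.
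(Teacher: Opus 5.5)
Your proposal is correct and follows the paper's proof essentially step for step. Lemma~\ref{lem:4} gives (i)$\Rightarrow$(ii), the pure hierarchy gives (ii)$\Rightarrow$(iii)$\Rightarrow$(iv)$\Rightarrow$(v), Reisner's criterion at the empty face gives (v)$\Rightarrow$(i), and the case $b=0$ is trivial, all exactly as in the paper. The only omission is the case $b=-1$, which the hypothesis $\indim\Delta\leq1$ permits; there $\Delta=\{\emptyset\}=2^{\emptyset}$ is a simplex, so all five conditions hold trivially, as the paper notes.
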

	
	\begin{proof}
		Put $b=\indim\Delta$.  If $b=-1$, then $\Delta=\{\emptyset\}$, so all five conditions hold under the standard conventions.  If $b=0$, then $\Delta^{[0]}$ is a nonempty zero-dimensional complex.  It is vertex decomposable, shellable, and Cohen--Macaulay over every field, and it is weakly connected under the convention above.
		
		Suppose that $b=1$.  Then $\Delta^{[1]}$ is a pure graph, and weak connectedness is ordinary connectivity of this graph.  By Lemma~\ref{lem:4}, every connected pure graph is vertex decomposable, so (i) implies (ii).  The implications
		\[
		\textup{(ii)}\Longrightarrow\textup{(iii)}
		\Longrightarrow\textup{(iv)}\Longrightarrow\textup{(v)}
		\]
		follow from the classical pure hierarchy.  Finally, Reisner's criterion in dimension one implies that a pure Cohen--Macaulay graph is connected.  Hence (v) implies (i).
	\end{proof}
	
	\begin{corollary}\label{cor:3}
		For a pure one-dimensional simplicial complex $\Gamma$, the following are equivalent:
		\begin{enumerate}[label=\textup{(\roman*)}]
			\item $\Gamma$ is connected;
			\item $\Gamma$ is vertex decomposable;
			\item $\Gamma$ is shellable;
			\item $\Gamma$ is Cohen--Macaulay over some field;
			\item $\Gamma$ is Cohen--Macaulay over every field.
		\end{enumerate}
	\end{corollary}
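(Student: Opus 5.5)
The natural route is to read Corollary~\ref{cor:3} off Theorem~\ref{thm:4} applied to $\Delta=\Gamma$. If $\Gamma$ is pure of dimension one, then $b=\indim\Gamma=\dim\Gamma=1$. The pure initial skeleton $\Gamma^{[1]}$ is generated by all edges of $\Gamma$, and since $\Gamma$ is pure one-dimensional every face lies in an edge, so $\Gamma^{[1]}=\Gamma$. Each condition in Theorem~\ref{thm:4} then becomes the corresponding ordinary condition on $\Gamma$:
\begin{itemize}
\item vertex dismissible means $\Gamma$ is vertex decomposable;
\item scalable means $\Gamma$ is shellable;
\item initially Cohen--Macaulay over $\K$ means $\Gamma$ is Cohen--Macaulay over $\K$ (Definition~\ref{def:4} and Corollary~\ref{cor:2});
\item weakly connected means the facet-ridge graph of $\Gamma$ is connected.
\end{itemize}

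The one translation that needs care is the last. For $b=1$, two facets (edges) are adjacent in the facet-ridge graph exactly when they share a vertex. So the facet-ridge graph is connected if and only if $\Gamma$, viewed as a graph, is connected. Here purity is used: $\Gamma$ has no isolated vertices, so every vertex lies on some edge, and connectivity of the edge-adjacency graph is equivalent to connectivity of the underlying graph. With this identification, (i) matches weak connectedness, and the equivalence of (i)--(v) is precisely Theorem~\ref{thm:4}. The order of (iv) and (v) differs between the two statements, but that is harmless.

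For a self-contained check, one could instead argue directly:
\begin{itemize}
\item (i)$\Rightarrow$(ii) is Lemma~\ref{lem:4};
\item (ii)$\Rightarrow$(iii)$\Rightarrow$(v)$\Rightarrow$(iv) is the classical pure hierarchy (Corollary~\ref{cor:2}), together with the fact that Cohen--Macaulay over every field implies over some field;
\item (iv)$\Rightarrow$(i) is Reisner's criterion, which gives $\widetilde H_0(\Gamma;\K)=0$ because $\dim\Gamma=1>0$.
\end{itemize}
I expect no real obstacle. The only step to state carefully is the identification of weak connectedness with graph connectivity when $b=1$, including the degenerate case of a single edge, where $\Gamma$ is a simplex and all conditions hold trivially.
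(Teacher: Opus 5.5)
Your proposal is correct and takes essentially the same route as the paper: apply Theorem~\ref{thm:4} to $\Gamma$ and use $\Gamma^{[1]}=\Gamma$, so that vertex dismissibility, scalability, and initial Cohen--Macaulayness become their ordinary counterparts. You also spell out why weak connectedness equals graph connectivity, which the paper handles inside the proof of Theorem~\ref{thm:4}, and your direct argument is that proof specialized to $b=1$.
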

	
	\begin{proof}
		Apply Theorem~\ref{thm:4} to the pure complex $\Gamma$.  Since $\Gamma^{[1]}=\Gamma$, vertex dismissibility is ordinary vertex decomposability, scalability is ordinary shellability, and initially Cohen--Macaulayness is ordinary Cohen--Macaulayness.
	\end{proof}
	
	\begin{remark}\label{rem:4}
		Strong vertex dismissibility is not included in Theorem~\ref{thm:4}. Example~\ref{ex:4} has initial dimension one and is weakly connected, vertex dismissible, scalable, and initially Cohen--Macaulay, but it is not strongly vertex dismissible.
	\end{remark}
	
	\subsection{Quasi-forests}
	\label{subsec:2}
	
	A facet $F$ of a complex $\Sigma$ is a \emph{leaf} if it is the only facet, or if there is a facet $F'\ne F$ such that
	\[
	F\cap Q\subseteq F\cap F'
	\qquad\text{for every facet }Q\ne F.
	\]
	The facet $F'$ is called a branch of $F$.  A \emph{quasi-forest} is a complex whose facets admit an order in which each facet is a leaf of the subcomplex generated by it and the preceding facets.  Equivalently, quasi-forests are clique complexes of chordal graphs \cite{HerzogHibiMuraiTrungZheng2008}.
	
	For $b=\indim\Delta$, let $\mathcal O_b(\Delta)$ be the graph whose vertices are the facets of $\Delta$, with $F_1$ adjacent to $F_2$ when $|F_1\cap F_2|\geq b$.  By definition, $\Delta$ is weakly connected if and only if $\mathcal O_b(\Delta)$ is connected.
	
	\begin{theorem}\label{thm:5}
		Let $\Delta$ be a quasi-forest and put $b=\indim\Delta$.  The following are equivalent:
		\begin{enumerate}[label=\textup{(\roman*)}]
			\item $\Delta$ is weakly connected;
			\item $\Delta$ is strongly vertex dismissible;
			\item $\Delta$ is vertex dismissible;
			\item $\Delta$ is scalable;
			\item $\Delta$ is initially Cohen--Macaulay over some field;
			\item $\Delta$ is initially Cohen--Macaulay over every field.
		\end{enumerate}
	\end{theorem}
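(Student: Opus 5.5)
The equivalences (iii)$\Rightarrow$(iv)$\Rightarrow$(vi)$\Rightarrow$(v) come from the pure hierarchy in Corollary~\ref{cor:2}, and (ii)$\Rightarrow$(iii) is Theorem~\ref{thm:3}. It therefore suffices to prove two implications: (v)$\Rightarrow$(i), which closes the chain back to weak connectedness, and (i)$\Rightarrow$(ii), the only implication that uses the quasi-forest structure.

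\emph{(v)$\Rightarrow$(i).} If $\Delta^{[b]}$ is Cohen--Macaulay over some field, then it is strongly connected, since Cohen--Macaulay pure complexes are strongly connected (Reisner's criterion applied to links of faces of codimension at least two). Strong connectedness of $\Delta^{[b]}$ is weak connectedness by definition. We also check that strong connectedness of $\Delta^{[b]}$ gives connectedness of $\mathcal O_b(\Delta)$. Every $b$-face lies in some facet of $\Delta$. Two $b$-faces sharing a $(b-1)$-face lie in facets $F_1,F_2$ with $|F_1\cap F_2|\geq b$. Hence a ridge path between $b$-faces inside two facets projects to a walk in $\mathcal O_b(\Delta)$. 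Faces inside a single facet are trivially connected through that facet.

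\emph{(i)$\Rightarrow$(ii).} We induct on the number of vertices, with the simplex case trivial. We fix a leaf order $F_1,\dots,F_m$ and let $F=F_m$ be the last leaf with branch $F'$. The vertices of $F\setminus F'$ are free in $F$, meaning they lie in no other facet.

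In the first case, $\dim F>b$. We pick $x\in F\setminus F'$; this set is nonempty because $F\not\subseteq F'$. Then $x$ is free above the initial dimension. Its link is the simplex $2^{F\setminus\{x\}}$. Its deletion replaces $F$ by $F\setminus\{x\}$, or drops $F$ when that set is contained in $F'$. The deletion is again a quasi-forest, being the clique complex of an induced subgraph of a chordal graph. It has the same initial dimension $b$, since $\dim(F\setminus\{x\})\geq b$. It is still weakly connected, because overlaps of size at least $b$ with $F\setminus\{x\}$ can be routed through $F'$ or persist. Induction applies, as in Theorem~\ref{thm:2}.

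In the second case, $\dim F=b$. If $\Delta$ has other facets, weak connectedness forces $|F\cap F''|\geq b$ for some facet $F''$. But $F\cap F''\subseteq F\cap F'$, so $|F\cap F'|=b$ and $F=(F\cap F')\cup\{x\}$ with $x$ a single free vertex. Then $\del_\Delta(x)$ is generated by $F_1,\dots,F_{m-1}$, which is a quasi-forest whose initial dimension is at least $b$. The link is the simplex $2^{F\setminus\{x\}}$, so $x$ is dismissing. To apply induction, we need the deletion to be weakly connected with respect to its own initial dimension $b'\geq b$.

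This last point is the main obstacle. When $b'>b$, weak connectedness at level $b'$ is stronger than at level $b$ and need not be inherited. The remedy I would try first is to reorder the leaf sequence so that the $b$-dimensional facets are removed last. Equivalently, I would process all facets of dimension greater than $b$ first via the first case, shrinking them toward the $b$-skeleton. Shrinking is allowed only when those facets can be taken as last leaves, which uses the fact that any facet can be the last leaf in some perfect elimination ordering of the chordal graph's cliques. This reduces to the situation where all facets have dimension $b$, i.e. the pure case. There $\del_\Delta(x)$ has the same initial dimension $b$, stays weakly connected because $F$ was a leaf attached along a ridge, and induction closes the argument. If this reordering argument fails, I would instead prove directly that a weakly connected quasi-forest has a dismissing free vertex whose deletion keeps the pure $b$-skeleton strongly connected at the new initial dimension.
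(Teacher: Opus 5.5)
\textbf{There is a genuine gap in (i)$\Rightarrow$(ii), at exactly the point you flagged.} Your reductions, your proof of (v)$\Rightarrow$(i), and your first case are sound. Your second case also works whenever $\Delta$ has a $b$-dimensional facet other than $F$. In that situation $\indim\del_\Delta(x)=b$. Moreover, every $\mathcal O_b$-neighbour $F''$ of $F$ satisfies $F\cap F''\subseteq F\setminus\{x\}\subseteq F'$, so it is adjacent to $F'$.

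The unresolved situation is when the last leaf $F$ is the \emph{unique} $b$-dimensional facet, and it does occur. Take $\Delta=\langle pqr,rst,ru\rangle$. Here $b=1$ and $\Delta$ is weakly connected. The sequence $pqr,rst,ru$ is a leaf order with last leaf $ru$ and branch $pqr$. Deleting $u$ leaves $\langle pqr,rst\rangle$, which is not weakly connected at its new initial dimension $2$ and is not strongly vertex dismissible.

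Your proposed remedy does not close this case. First, the claim that any facet can be the last leaf is false. In $\langle pq,qr,rs\rangle$, the leaf condition for $qr$ would require another facet containing both $q$ and $r$. So $qr$ is not a leaf and cannot come last in any leaf order. Second, the plan of first shrinking every facet of dimension $>b$ toward the pure case can stall. In $\langle pq,qrs,rt,su\rangle$, with $b=1$, the only larger facet $qrs$ has no free vertex.

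\textbf{The missing fact} is that when $\Delta$ has at least two facets, you can always pick a free vertex outside the unique $b$-dimensional facet $F_b$. The paper gets this from Dirac's theorem. Write $\Delta=\Cl(H)$ with $H$ a noncomplete chordal graph. Then $H$ has two nonadjacent simplicial vertices, each lying in exactly one facet. Since $F_b$ is a clique, at least one of them, say $x$, lies outside $F_b$. Equivalently, you could use that a clique tree with at least two nodes has at least two leaves.

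With this choice, $x$ is free in a facet of dimension $>b$, so your first case applies verbatim. Also $F_b$ survives in $\del_\Delta(x)$, so the initial dimension stays $b$. In general the paper chooses a simplicial vertex $x$ such that some $b$-dimensional facet does not contain $x$. This handles both of your cases uniformly and makes any reduction to the pure case unnecessary.
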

	
	\begin{proof}
		The implications already proved give
		\[
		\textup{(ii)}\Longrightarrow\textup{(iii)}
		\Longrightarrow\textup{(iv)}
		\Longrightarrow\textup{(vi)}
		\Longrightarrow\textup{(v)}.
		\]
		A pure Cohen--Macaulay complex is strongly connected in codimension one, so (v) implies (i).  It remains to prove (i) implies (ii).
		
		We induct on the number of vertices.  A quasi-forest with one facet is a simplex.  Suppose that $\Delta$ has at least two facets, put $b=\indim\Delta$, and write $\Delta=\Cl(H)$ for a noncomplete chordal graph $H$.  By Dirac's theorem, $H$ has two nonadjacent simplicial vertices \cite{Dirac1961}.  Each simplicial vertex belongs to a unique maximal clique.  If $\Delta$ has a unique $b$-dimensional facet $F_b$, the two simplicial vertices cannot both belong to $F_b$, because $F_b$ is a clique. Choose a simplicial vertex $x$ outside $F_b$.  If there is more than one $b$-dimensional facet, choose either of the two simplicial vertices.  Let $F$ be the unique facet containing $x$.  In both cases at least one $b$-dimensional facet does not contain $x$ and remains a facet after deletion. Since $x$ is simplicial,
		\[
		\link_\Delta(x)=2^{F\setminus\{x\}}
		\]
		is a simplex.  Every facet of the deletion other than a possible new facet $F\setminus\{x\}$ has dimension at least $b$.  If $|F|\geq b+2$, the possible new facet also has dimension at least $b$.  If $|F|=b+1$, the connectedness of $\mathcal O_b(\Delta)$ gives a facet $F'\ne F$ with $|F\cap F'|\geq b$.  The vertex $x$ occurs only in $F$, so $F\cap F'\subseteq F\setminus\{x\}$.  Both sets have $b$ elements, and hence
		\[
		F\setminus\{x\}=F\cap F'\subseteq F'.
		\]
		Thus $F\setminus\{x\}$ is not a facet of the deletion.  It follows that $x$ is dismissing and that the deletion has initial dimension $b$.
		
		The deletion is a quasi-forest because $H-x$ is chordal. We next verify that the deletion is weakly connected. If $F\setminus\{x\}$ remains a facet, then, since $x$ belongs to no other facet, this new facet has the same neighbors in $\mathcal O_b$ that $F$ had.  If it is contained in a facet $F'$, then every neighbor $F''$ of $F$ is adjacent to $F'$, because
		\[
		F\cap F''\subseteq F\setminus\{x\}\subseteq F'
		\qquad\text{and}\qquad |F\cap F''|\geq b.
		\]
		Therefore deleting or replacing the vertex $F$ does not disconnect the overlap graph.  Thus the deletion again satisfies condition (i) and is strongly vertex dismissible by induction.  Together with the simplex link, this proves that $x$ begins a strong dismissal of $\Delta$.
	\end{proof}
	
	\begin{corollary}\label{cor:4}
		Let $G$ be a co-chordal graph and put $\Delta=\Ind(G)$.  Then the six conditions in Theorem~\ref{thm:5} are equivalent for $\Delta$.
	\end{corollary}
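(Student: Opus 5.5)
The plan is to reduce Corollary~\ref{cor:4} directly to Theorem~\ref{thm:5} by showing that $\Ind(G)$ is a quasi-forest whenever $G$ is co-chordal. The independence complex of $G$ is the clique complex of the complement graph: a subset $A\subseteq V(G)$ is independent in $G$ exactly when $A$ is a clique in $\overline{G}$. Hence
\[
\Ind(G)=\Cl(\overline{G}).
\]
By definition of co-chordal, $\overline{G}$ is chordal.

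Next we invoke the characterization recalled before Theorem~\ref{thm:5}: quasi-forests are exactly clique complexes of chordal graphs \cite{HerzogHibiMuraiTrungZheng2008}. This shows that $\Delta=\Ind(G)$ is a quasi-forest. Theorem~\ref{thm:5} applied to $\Delta$ then yields the equivalence of the six conditions: weak connectedness, strong vertex dismissibility, vertex dismissibility, scalability, and initially Cohen--Macaulayness over some field and over every field.

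The only care needed is with degenerate cases. If $G$ has no vertices, then $\Delta=\{\emptyset\}$, a simplex, and all conditions hold trivially. Isolated vertices of $G$ become cone points of $\Delta$, and these are harmless for the clique complex description. There is no real obstacle here; the one point to state explicitly is the identification $\Ind(G)=\Cl(\overline{G})$, since the equivalence of quasi-forests with chordal clique complexes is already quoted in the paper.
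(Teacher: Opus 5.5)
Your proposal is correct and follows the paper's proof exactly: you identify $\Ind(G)=\Cl(\overline G)$ with $\overline G$ chordal, conclude that $\Ind(G)$ is a quasi-forest, and apply Theorem~\ref{thm:5}. Your remarks on the empty graph and on isolated vertices are harmless extras that the paper leaves implicit.
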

	
	\begin{proof}
		The complement $\overline G$ is chordal and $\Ind(G)=\Cl(\overline G)$.  Hence $\Ind(G)$ is a quasi-forest.
	\end{proof}
	
	\begin{corollary}
		\label{cor:5}
		Let $G=K_{p_1,\ldots,p_t}$ with $p_i\geq1$, put $\rho=\min\{p_1,\ldots,p_t\}$, and set $\Delta=\Ind(G)$.  The six conditions in Theorem~\ref{thm:5} hold if and only if either $t=1$ or $\rho=1$.  In particular, $\Ind(K_{p,q})$ has these properties exactly when $\min\{p,q\}=1$.
	\end{corollary}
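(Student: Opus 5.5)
The plan is to apply Corollary~\ref{cor:4}, since $K_{p_1,\ldots,p_t}$ is co-chordal: its complement is the disjoint union of cliques $K_{p_1},\ldots,K_{p_t}$, which is chordal. So the six conditions of Theorem~\ref{thm:5} are equivalent for $\Delta=\Ind(G)$, and it suffices to decide exactly when one of them, weak connectedness, holds.

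The first step is to describe the facets. The independent sets of $G$ are exactly the subsets of a single part $V_i$, because any two vertices in different parts are adjacent. Hence the facets of $\Delta$ are the simplices $2^{V_i}$, and $\Delta$ is the disjoint union of $t$ simplices of dimensions $p_i-1$. Thus $b=\indim\Delta=\rho-1$.

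Next I check weak connectedness through the overlap graph $\mathcal O_b(\Delta)$, where facets $V_i$ and $V_j$ are adjacent when $|V_i\cap V_j|\geq b$.
\begin{enumerate}[label=\textup{(\alph*)}]
\item If $t=1$, the graph has a single vertex and is connected; indeed $\Delta$ is a simplex.
\item If $t\geq2$, distinct facets are disjoint, so $|V_i\cap V_j|=0$ and they are adjacent exactly when $b\leq0$, that is, when $\rho=1$. In that case $\mathcal O_b(\Delta)$ is complete and hence connected. If $\rho\geq2$, then $b\geq1$, there are no edges among $t\geq2$ vertices, and the graph is disconnected.
\end{enumerate}
So the conditions hold if and only if $t=1$ or $\rho=1$.

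As an independent check for $\rho=1$, I would confirm directly that $\Delta^{[0]}$ is a nonempty set of points, which is vertex decomposable. This is consistent with the convention in Theorem~\ref{thm:4} for $b=0$.

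For $K_{p,q}$ with $p,q\geq1$ we have $t=2$, so the criterion reduces to $\min\{p,q\}=1$. The only delicate point is being careful with the convention when $b=0$. I expect no real obstacle: the identification of the facets and the computation of the overlap graph are the whole argument.
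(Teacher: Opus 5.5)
Your proposal is correct and follows essentially the paper's proof: identify the facets as the partite sets, so $b=\rho-1$, decide weak connectedness, and apply the quasi-forest equivalence through co-chordality. The only cosmetic difference is that you test weak connectedness with the overlap graph $\mathcal O_b(\Delta)$, whereas the paper notes that $\Delta^{[b]}$ is the disjoint union of the complexes of $\rho$-subsets of the parts; these are equivalent formulations of the same check.
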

	
	\begin{proof}
		The facets of $\Delta$ are the partite sets $V_1,\ldots,V_t$.  If $t=1$, then $\Delta$ is a simplex.  Suppose that $t\geq2$.  Its pure initial skeleton is the disjoint union of the complexes generated by the $\rho$-subsets of the partite sets.  It is weakly connected exactly when $\rho=1$.  Apply Theorem~\ref{thm:5}.
	\end{proof}
	
	\subsection{Independence complexes of pseudoforests}
	\label{subsec:3}
	
	For a finite graph $G$, let
	\[
	i(G)=\min\{|M|:M\text{ is a maximal independent set of }G\}
	\]
	be its independent domination number; see \cite{Haynes1998}. For independence complexes, dismissing vertices are closely related to independent domination vertex critical graphs. Following the vertex deletion terminology in \cite{EdwardsMacGillivray2012,AnanchuenEtAl2018}, a graph $G$ is \emph{independent domination vertex critical}, or \emph{$i$-vertex-critical}, if
	\[
	i(G-v)<i(G)
	\qquad\text{for every }v\in V(G).
	\]
	Thus deleting every vertex of an $i$-vertex-critical graph lowers the independent domination number. By Proposition~\ref{prop:6}, a vertex of $\Ind(G)$ is dismissing precisely when its deletion does not lower $i(G)$. Strong vertex dismissibility therefore requires, at each recursive step, a vertex $v$ with $i(G-v)\geq i(G)$, and such a vertex must exist recursively in both $G-v$ and $G-N_G[v]$. Consequently, an $i$-vertex-critical graph with at least one edge obstructs the recursion.
	
	\begin{proposition}\label{prop:6}
		Let $G$ be a finite graph and let $v\in V(G)$.  Then $\indim\Ind(G)=i(G)-1$, $\del_{\Ind(G)}(v)=\Ind(G-v)$, $\link_{\Ind(G)}(v)=\Ind(G-N_G[v])$, and
		\[
		v\text{ is dismissing in }\Ind(G)
		\quad\Longleftrightarrow\quad
		i(G-v)\geq i(G).
		\]
		Therefore, $\Ind(G)$ is strongly vertex dismissible if and only if $G$ is edgeless, or there is a vertex $v$ satisfying the displayed inequality such that both $\Ind(G-v)$ and $\Ind(G-N_G[v])$ are strongly vertex dismissible.
	\end{proposition}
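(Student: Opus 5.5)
The plan is to verify each identity directly from the definitions and then read off the recursion from Definition~\ref{def:2}.

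\emph{Initial dimension.} The faces of $\Ind(G)$ are the independent sets of $G$, so its facets are exactly the maximal independent sets. The minimum facet dimension is therefore $\min|M|-1=i(G)-1$. One convention needs a check: if $G$ has no vertices, then $\Ind(G)=\{\emptyset\}$ and $i(G)=0$, which agrees with $\indim\{\emptyset\}=-1$.

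\emph{Deletion and link.} A set avoiding $v$ is independent in $G$ exactly when it is independent in $G-v$; this gives $\del_{\Ind(G)}(v)=\Ind(G-v)$. For the link, a set $F$ with $v\notin F$ satisfies $F\cup\{v\}$ independent exactly when $F$ is independent and contains no neighbor of $v$. This is the same as $F$ being independent in $G-N_G[v]$, so $\link_{\Ind(G)}(v)=\Ind(G-N_G[v])$.

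\emph{The dismissing criterion.} By Definition~\ref{def:1}, $v$ is dismissing when $\indim\del_{\Ind(G)}(v)\geq\indim\Ind(G)$. Applying the first identity to both $G-v$ and $G$, this reads $i(G-v)-1\geq i(G)-1$, that is, $i(G-v)\geq i(G)$.

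\emph{The recursion.} Substitute these identities into Definition~\ref{def:2}. The only point to settle is when $\Ind(G)$ is a simplex. It is the simplex $2^{V(G)}$ exactly when $V(G)$ itself is independent, that is, when $G$ is edgeless.

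The one subtlety is the vertex set. A vertex is always independent, so $V(\Ind(G))=V(G)$, and every vertex of $G$ is a legitimate choice in the recursion. In the link, however, $\Ind(G-N_G[v])$ has vertex set $V(G)\setminus N_G[v]$. This is consistent with the paper's convention that recursive operations act on vertices of the complex while the ground set may be larger, so no vertices are lost or added incorrectly. With that settled, every step above is routine and I see no real obstacle.
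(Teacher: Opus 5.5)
Your proposal is correct and follows the paper's proof essentially step for step: facets are the maximal independent sets, so $\indim\Ind(G)=i(G)-1$; the deletion and link identities come straight from the definition; the dismissing criterion is the translation $i(G-v)-1\geq i(G)-1$; and the final statement is Definition~\ref{def:2} in graph form. Your extra checks (the empty-graph convention, and that $\Ind(G)$ is a simplex exactly when $G$ is edgeless) are sound details the paper leaves implicit.
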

	
	\begin{proof}
		The facets of $\Ind(G)$ are the maximal independent sets of $G$, so the first equality follows from the definition of $i(G)$.  The deletion and link identities follow directly from the definition of the independence complex.  Therefore
		\[
		\begin{aligned}
			v\text{ dismissing }\Longleftrightarrow
			\indim\Ind(G-v)\geq\indim\Ind(G)\Longleftrightarrow i(G-v)-1\geq i(G)-1.
		\end{aligned}
		\]
		The last statement is the recursive definition written in graph form.
	\end{proof}
	
	\begin{corollary}\label{cor:6}
		Let $G$ be an $i$-vertex-critical graph. Then $\Ind(G)$ is strongly vertex dismissible if and only if $G$ is edgeless. In particular, every $i$-vertex-critical graph with at least one edge is an obstruction to strong vertex dismissibility.
	\end{corollary}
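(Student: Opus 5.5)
We argue directly from Proposition~\ref{prop:6}. If $G$ is edgeless, then every vertex of $G$ is isolated. The only maximal independent set is then $V(G)$, so $\Ind(G)=2^{V(G)}$ is a simplex and is strongly vertex dismissible by Definition~\ref{def:2}. This gives the ``if'' direction. It does not use the criticality hypothesis.

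For the converse, suppose that $G$ is $i$-vertex-critical and has at least one edge. We show that $\Ind(G)$ is not a simplex. Let $uw$ be an edge. No independent set contains both $u$ and $w$. Hence $\{u,w\}\notin\Ind(G)$, even though both $u$ and $w$ are vertices of $\Ind(G)$. A simplex on its vertex set would contain every pair of its vertices, so $\Ind(G)$ is not a simplex.

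Definition~\ref{def:2}, in the graph form given by Proposition~\ref{prop:6}, then requires a vertex $v$ with $i(G-v)\geq i(G)$. By $i$-vertex-criticality, $i(G-v)<i(G)$ for every $v\in V(G)$. Equivalently, by Proposition~\ref{prop:6}, no vertex of $\Ind(G)$ is dismissing. So no vertex can begin a strong dismissal, and $\Ind(G)$ is not strongly vertex dismissible. This gives the ``only if'' direction, and the ``in particular'' statement is the contrapositive.

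The only point needing care is the identification $V(\Ind(G))=V(G)$. Every singleton $\{v\}$ is independent, so every vertex of $G$ is a vertex of $\Ind(G)$. Therefore the quantifier ``for every $v\in V(G)$'' in the definition of $i$-vertex-criticality ranges over all candidate dismissing vertices. We also treat the degenerate case where $G$ has no vertices: it is edgeless, and $\Ind(G)=\{\emptyset\}$ is a simplex, consistent with the statement. There is no real obstacle beyond these conventions.
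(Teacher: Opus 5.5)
Your proposal is correct and follows the paper's proof essentially verbatim: an edgeless graph gives a simplex, while an edge makes $\Ind(G)$ a non-simplex in which, by $i$-vertex-criticality and Proposition~\ref{prop:6}, no vertex is dismissing, so the recursion cannot begin. Your extra remarks on $V(\Ind(G))=V(G)$ and the empty graph are harmless clarifications of the same argument.
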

	
	\begin{proof}
		If $G$ is edgeless, then $\Ind(G)$ is a simplex.  Conversely, suppose that $G$ has an edge.  Then $\Ind(G)$ is not a simplex, while $i$-vertex-criticality and Proposition~\ref{prop:6} show that no vertex of $\Ind(G)$ is dismissing.  Hence the strong dismissal recursion cannot begin.
	\end{proof}
	
	\begin{corollary}\label{cor:7}
		If $G=G_1\mathbin{\dot\cup}\cdots\mathbin{\dot\cup}G_s$, then
		\[
		i(G)=\sum_{j=1}^s i(G_j),
		\]
		and $\Ind(G)$ is strongly vertex dismissible if and only if every $\Ind(G_j)$ is strongly vertex dismissible.
	\end{corollary}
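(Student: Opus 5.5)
The plan has two parts: first compute $i(G)$ for a disjoint union directly, then obtain the dismissibility statement from the join theorem.

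\emph{Additivity of $i$.} A subset $M\subseteq V(G)$ is a maximal independent set of $G=G_1\mathbin{\dot\cup}\cdots\mathbin{\dot\cup}G_s$ exactly when each $M\cap V(G_j)$ is a maximal independent set of $G_j$. Indeed, no edge joins different components, so both independence and maximality are checked componentwise: a vertex of $G_j$ can be added to $M$ if and only if it can be added to $M\cap V(G_j)$ inside $G_j$. Minimizing $|M|=\sum_j|M\cap V(G_j)|$ therefore splits into independent minimizations over the components, which gives
\[
i(G)=\sum_{j=1}^s i(G_j).
\]
The same conclusion also follows from Proposition~\ref{prop:6} together with the join formula $\indim(\Delta*\Gamma)=\indim\Delta+\indim\Gamma+1$, since $i(G)-1=\sum_j\bigl(i(G_j)-1\bigr)+(s-1)$.

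\emph{Strong vertex dismissibility.} The key identity is
\[
\Ind(G)=\Ind(G_1)*\cdots*\Ind(G_s),
\]
on the pairwise disjoint vertex sets $V(G_j)$. This holds because a set is independent in $G$ exactly when each of its traces on the components is independent. Every vertex of $G$ lies in some $V(G_j)$ and is a vertex of $\Ind(G_j)$, since singletons are independent, so the vertex sets of the factors are disjoint as Theorem~\ref{thm:1} requires. I then induct on $s$. For $s=1$ there is nothing to prove. For $s\ge2$, write $\Ind(G)=\Ind(G_1\mathbin{\dot\cup}\cdots\mathbin{\dot\cup}G_{s-1})*\Ind(G_s)$ and apply Theorem~\ref{thm:1}: the join is strongly vertex dismissible if and only if both factors are. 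The induction hypothesis on the first factor then yields the claimed equivalence.

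I do not expect a real obstacle here. The only care needed is bookkeeping: the join in Theorem~\ref{thm:1} must be taken on disjoint vertex sets, and the vertex set of each $\Ind(G_j)$ is all of $V(G_j)$. This last point uses the convention that graphs carry their own vertex sets, so that isolated vertices still appear as cone points of the independence complex.
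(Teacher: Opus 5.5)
Your proposal is correct and follows the paper's proof: componentwise maximality of independent sets gives $i(G)=\sum_j i(G_j)$, and the identity $\Ind(G)=\Ind(G_1)*\cdots*\Ind(G_s)$ together with Theorem~\ref{thm:1} gives the equivalence for strong vertex dismissibility. The paper leaves the two-factor reduction implicit, while you carry it out by an explicit induction on $s$ and also check that the factors have disjoint vertex sets.
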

	
	\begin{proof}
		A maximal independent set of a disjoint union is the union of maximal independent sets in its components, which proves the formula.  Also,
		\[
		\Ind(G)=\Ind(G_1)*\cdots*\Ind(G_s),
		\]
		so the recursive statement follows from Theorem~\ref{thm:1}.
	\end{proof}
	
	\begin{lemma}\label{lem:5}
		If $\mathcal T$ is a forest, then $\Ind(\mathcal T)$ is vertex decomposable and hence strongly vertex dismissible.
	\end{lemma}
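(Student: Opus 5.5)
Vertex decomposability of $\Ind(\mathcal T)$ for a forest is classical; we plan to reprove it by induction on $|V(\mathcal T)|$ through the graph form of the recursion in Proposition~\ref{prop:6}, and then obtain strong vertex dismissibility from Proposition~\ref{prop:1}. For the Bj\"orner--Wachs condition we use the standard graph criterion: if $N_G[u]\subseteq N_G[v]$ for some vertex $u\ne v$, then $v$ is a shedding vertex of $\Ind(G)$. We verify it directly. A facet of $\link_{\Ind(G)}(v)=\Ind(G-N_G[v])$ is an independent set $A$ with $A\cup\{v\}$ maximal independent in $G$. Since $A\cup\{v\}$ is independent, $A$ avoids $N_G[v]$, and hence avoids $N_G[u]$. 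Thus $A\cup\{u\}$ is independent in $G-v$. Consequently $A$ is not maximal in $\del_{\Ind(G)}(v)=\Ind(G-v)$, and no facet of the link is a facet of the deletion.

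For the induction, if $\mathcal T$ is edgeless then $\Ind(\mathcal T)$ is a simplex. Otherwise some component of $\mathcal T$ has an edge. In that component choose a leaf $u$ and let $v$ be its unique neighbor; this includes the case of an isolated edge. Then $N[u]=\{u,v\}\subseteq N[v]$, so $v$ is a shedding vertex of $\Ind(\mathcal T)$.

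Both branches are again forests on fewer vertices. The deletion is $\Ind(\mathcal T-v)$ and the link is $\Ind(\mathcal T-N[v])$, since induced subgraphs of forests are forests. By induction both are vertex decomposable, and therefore $\Ind(\mathcal T)$ is vertex decomposable. Edge cases such as $\mathcal T-N[v]$ being empty are harmless: then the link is $\{\emptyset\}=2^\emptyset$, which is a simplex.

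The strong dismissibility conclusion follows from Proposition~\ref{prop:1}. Alternatively, Corollary~\ref{cor:1} shows directly that $v$ is dismissing, that is, $i(\mathcal T-v)\ge i(\mathcal T)$.

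The only step requiring care is the shedding criterion. One must verify that "no facet of the link is a facet of the deletion" is exactly what the closed-neighborhood containment gives; the check above handles this. Everything else is routine.
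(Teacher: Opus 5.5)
Your proof is correct and follows the paper's argument: induction on $|V(\mathcal T)|$, a leaf $u$ with neighbor $v$ making $v$ a shedding vertex because every independent set of $\mathcal T-N_{\mathcal T}[v]$ extends by $u$ inside $\mathcal T-v$, both branches being forests, and then Proposition~\ref{prop:1}. Your explicit $N[u]\subseteq N[v]$ check and the empty-link edge case simply spell out details the paper leaves implicit.
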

	
	\begin{proof}
		We induct on $|V(\mathcal T)|$.  If $\mathcal T$ has no edges, its independence complex is a simplex. Otherwise choose a leaf $u$ with neighbor $v$.  The vertex $v$ is shedding in $\Ind(\mathcal T)$: every independent set of $\mathcal T-N_{\mathcal T}[v]$ can be enlarged by $u$ inside $\mathcal T-v$.  Also,
		\[
		\del_{\Ind(\mathcal T)}(v)=\Ind(\mathcal T-v),
		\qquad
		\link_{\Ind(\mathcal T)}(v)=\Ind(\mathcal T-N_{\mathcal T}[v]).
		\]
		Both graphs on the right are forests, so induction proves vertex decomposability.  Apply Proposition~\ref{prop:1}.
	\end{proof}
	
	For paths and cycles, with $P_0$ empty,
	\[
	i(P_m)=\left\lceil\frac m3\right\rceil,
	\qquad
	i(C_n)=\left\lceil\frac n3\right\rceil.
	\]
	A selected vertex dominates at most three vertices, while suitable vertices at pairwise distance three attain the bound.
	
	\begin{lemma}\label{lem:6}
		For every integer $q\geq1$ and every field $\K$, the pure complex $\Ind(C_{3q+1})^{[q]}$ is not Cohen--Macaulay over $\K$.
	\end{lemma}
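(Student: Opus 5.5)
The plan is to prove the statement homologically rather than through links. Put $n=3q+1$ and $\Gamma=\Ind(C_n)^{[q]}$. By Proposition~\ref{prop:6} and the formula $i(C_n)=\lceil n/3\rceil=q+1$, we have $\indim\Ind(C_n)=q$. By Lemma~\ref{lem:1}, $\Gamma=\Ind(C_n)^q$ is the full ordinary $q$-skeleton, and it is pure of dimension $q$.

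If $\Gamma$ were Cohen--Macaulay over $\K$, Reisner's criterion applied to the empty face would give $\widetilde H_j(\Gamma;\K)=0$ for all $j<q$. So it suffices to exhibit nonzero homology in degree $q-1$.

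Passing to the $q$-skeleton does not change homology in degrees below $q$, because the chain complexes agree in degrees $\leq q$. Hence
\[
\widetilde H_{q-1}(\Gamma;\K)\cong\widetilde H_{q-1}(\Ind(C_{3q+1});\K).
\]
Here I will invoke Kozlov's computation of the homotopy types of independence complexes of cycles. It gives $\Ind(C_{3k+1})\simeq S^{k-1}$, together with $\Ind(C_{3k})\simeq S^{k-1}\vee S^{k-1}$ and $\Ind(C_{3k+2})\simeq S^{k}$. Applied with $k=q$, this yields $\widetilde H_{q-1}(\Ind(C_{3q+1});\K)\cong\K\neq0$ over every field. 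Since $q-1<q=\dim\Gamma$, the complex $\Gamma$ is not Cohen--Macaulay.

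As a sanity check, for $q=1$ the argument says that $\Ind(C_4)$, which is two disjoint edges, is disconnected.

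The main obstacle is the homotopy (or just homology) computation. To keep the argument self-contained, I would derive it from the path case by deletion and link. Recall $\Ind(P_m)$ is contractible for $m\equiv 0\pmod 3$ and is a sphere otherwise. Fix a vertex $v$ of $C_n$. Then $\del(v)=\Ind(P_{n-1})$ and $\link(v)=\Ind(P_{n-3})$, and $\Ind(C_n)$ is obtained by gluing the cone over the link onto the deletion. For $n=3q+1$, the deletion $\Ind(P_{3q})$ is contractible, so $\Ind(C_n)\simeq\Sigma\,\Ind(P_{3q-2})$. The path result, $\Ind(P_{3q-2})\simeq S^{q-2}$, proved by the standard leaf-fold induction as in Lemma~\ref{lem:5}, then gives $\Ind(C_{3q+1})\simeq S^{q-1}$. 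If only homology is wanted, the same computation follows from the Mayer--Vietoris sequence of this decomposition, using the fact that the deletion is acyclic.
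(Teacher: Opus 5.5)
Your main argument is the paper's proof. You use $i(C_{3q+1})=q+1$ and Lemma~\ref{lem:1} to identify $\Ind(C_{3q+1})^{[q]}$ with the ordinary $q$-skeleton. You then cite Kozlov's $\Ind(C_{3q+1})\simeq S^{q-1}$ and use the fact that the $q$-skeleton has the same reduced homology below degree $q$, which gives $\widetilde H_{q-1}\cong\K$. Reisner's criterion at the empty face finishes the proof. That line is correct as it stands.

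The optional self-contained derivation of Kozlov's result is wrong as written, although the final homotopy type comes out right.

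You use the vertex-count convention: $P_m$ has $m$ vertices, so $\del(v)=\Ind(P_{n-1})$ and $\link(v)=\Ind(P_{n-3})$. With that convention, the fold $\Ind(P_m)\simeq\Ind(P_2)*\Ind(P_{m-3})=\Sigma\,\Ind(P_{m-3})$ has base cases $\Ind(P_0)=\{\emptyset\}$, $\Ind(P_1)$ a point, and $\Ind(P_2)=S^0$. It shows that $\Ind(P_m)$ is contractible exactly when $m\equiv1\pmod 3$, and that $\Ind(P_{3k-1})\simeq\Ind(P_{3k})\simeq S^{k-1}$.

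So both of your path claims fail. The deletion $\Ind(P_{3q})\simeq S^{q-1}$ is not contractible; for $q=1$ it is an edge plus an isolated point. And $\Ind(P_{3q-2})$ is contractible, not $S^{q-2}$; for $q=1$ it is a point, not $S^{-1}$. The two errors happen to cancel.

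The correct computation swaps the roles. Since $\link(v)\subseteq\del(v)$ is contractible, $\Ind(C_{3q+1})$, the mapping cone of $\link(v)\hookrightarrow\del(v)$, is homotopy equivalent to $\del(v)=\Ind(P_{3q})\simeq S^{q-1}$. Likewise, in the Mayer--Vietoris version it is the link, not the deletion, that is acyclic. Your remark that the path result follows "as in Lemma~\ref{lem:5}" is also off: that lemma proves vertex decomposability, not homotopy types, so the fold argument above is what you need.
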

	
	\begin{proof}
		Since $i(C_{3q+1})=q+1$, Lemma~\ref{lem:1} gives
		\[
		\Ind(C_{3q+1})^{[q]}=\Ind(C_{3q+1})^q.
		\]
		Kozlov's computation of cycle independence complexes gives
		\[
		\Ind(C_{3q+1})\simeq S^{q-1}
		\]
		\cite[Proposition~5.2]{Kozlov1999}.  The inclusion of the $q$-skeleton into a simplicial complex induces an isomorphism on reduced homology in every degree strictly below $q$.  Hence
		\[
		\widetilde H_{q-1}\bigl(\Ind(C_{3q+1})^{[q]};\K\bigr)
		\cong
		\widetilde H_{q-1}\bigl(\Ind(C_{3q+1});\K\bigr)
		\cong \K.
		\]
		The complex has dimension $q$. Since $\K$ is arbitrary, Reisner's criterion implies that it is not Cohen--Macaulay over $\K$ for every field $\K$.
	\end{proof}
	
	For a rooted graph $(H,r)$, define costs, with value $+\infty$ for an impossible state, by
	\begin{align*}
		\alpha(H,r)&=\min\{|D|-1:r\in D,\ D\text{ independently dominates }H\},\\
		\beta(H,r)&=\min\{|D|:r\notin D,\ D\text{ independently dominates }H\},\\
		\gamma(H,r)&=\min\{|D|:r\notin D,\ D\text{ is independent, dominates }H-r,\text{ and does not dominate }r\}.
	\end{align*}
	
	\begin{lemma}\label{lem:7}
		For every rooted graph $(H,r)$, we have $i(H)=\min\{1+\alpha(H,r),\beta(H,r)\}$, $i(H-r)=\min\{\beta(H,r),\gamma(H,r)\}$, and $\alpha(H,r)\leq \gamma(H,r)$.  Therefore,
		\[
		r\text{ is not dismissing in }\Ind(H)
		\quad\Longleftrightarrow\quad
		\gamma(H,r)=\alpha(H,r)<\beta(H,r).
		\]
		If two rooted graphs $H_1$ and $H_2$ meet only at their common root $r$, then $r$ is not dismissing in $\Ind(H_1\cup H_2)$ if and only if it is not dismissing in both $\Ind(H_1)$ and $\Ind(H_2)$.
	\end{lemma}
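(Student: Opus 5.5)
We split every maximal independent set $D$ of $H$ (an independent dominating set) according to whether $r\in D$. This gives $i(H)=\min\{1+\alpha,\beta\}$ at once, writing $\alpha=\alpha(H,r)$ and so on.

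For $H-r$, a maximal independent set $D$ of $H-r$ falls into one of two cases. Either $D$ dominates $r$, and then it independently dominates $H$ without containing $r$, which is the $\beta$ case. Or $D$ does not dominate $r$, which is exactly the $\gamma$ case. Conversely, any set counted by $\beta$ or $\gamma$ is independent and dominates $H-r$. Hence $i(H-r)=\min\{\beta,\gamma\}$.

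For $\alpha\leq\gamma$, take $D$ realizing $\gamma$. Then $D\cup\{r\}$ is independent, because $r$ has no neighbour in $D$, and it dominates $H$. So $\alpha\leq |D\cup\{r\}|-1=|D|=\gamma$. All these identities hold with the convention $+\infty$ for impossible states; $\beta$ may be infinite, for instance when $r$ is isolated.

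For the equivalence, Proposition~\ref{prop:6} says $r$ is not dismissing exactly when $i(H-r)<i(H)$, that is, $\min\{\beta,\gamma\}<\min\{1+\alpha,\beta\}$.

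Suppose this strict inequality holds. Then $\beta$ cannot be the minimum on the left, so $\gamma<\beta$ and $\gamma<1+\alpha$. Combined with $\alpha\leq\gamma$ and integrality, this forces $\gamma=\alpha$, and then $\alpha<\beta$.

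Conversely, if $\gamma=\alpha<\beta$, the left side equals $\alpha$ and the right side is $\min\{1+\alpha,\beta\}\geq\alpha+1$, so the inequality is strict. The infinite values need a check: $\alpha$ is always finite, since any maximal independent set containing $r$ exists. So $\gamma=\alpha$ forces $\gamma$ finite.

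For the gluing statement, let $H=H_1\cup H_2$ with $V(H_1)\cap V(H_2)=\{r\}$, and write $\alpha_j,\beta_j,\gamma_j$ for the costs of $(H_j,r)$. We establish composition rules by restricting a set $D$ to each $D\cap V(H_j)$, noting that non-root vertices of $H_1$ and $H_2$ are never adjacent:
\begin{align*}
\alpha(H)&=\alpha_1+\alpha_2,\\
\gamma(H)&=\gamma_1+\gamma_2,\\
\beta(H)&=\min\{\beta_1+\beta_2,\ \beta_1+\gamma_2,\ \gamma_1+\beta_2\}.
\end{align*}
The $\beta$ rule holds because $r\notin D$ must be dominated from at least one side, while domination of the other vertices and independence are checked sidewise. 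The $-1$ in the definition of $\alpha$ is exactly what makes the root counted once. For the reverse inequalities in each rule, we glue optimal sidewise sets, which stay independent since no edges cross between the two sides away from $r$.

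Now set $\delta_j=\beta_j-\alpha_j\geq$ and $\epsilon_j=\gamma_j-\alpha_j\geq0$. Then $r$ is non-dismissing in $H$ iff $\epsilon_1+\epsilon_2=0$ and $\min\{\delta_1+\delta_2,\delta_1+\epsilon_2,\epsilon_1+\delta_2\}>0$. The first condition gives $\epsilon_1=\epsilon_2=0$, so the second becomes $\min\{\delta_1+\delta_2,\delta_1,\delta_2\}>0$. Here $\delta_j$ could a priori be negative, so the first term matters: the minimum is positive iff $\delta_1>0$ and $\delta_2>0$, since if both are positive their sum is too. This says exactly that $r$ is non-dismissing in each $H_j$.

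The main obstacle is verifying the $\beta$ composition rule carefully, in particular the sidewise decomposition when $r$ is dominated from only one side, and handling the infinite values.
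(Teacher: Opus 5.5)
Your proposal is correct and follows the paper's proof: the same split by the state of the root gives the formulas for $i(H)$ and $i(H-r)$, adjoining $r$ to a $\gamma$-set gives $\alpha\le\gamma$, integrality yields the criterion $\gamma=\alpha<\beta$, and the one-sum step uses the same composition rules for $\alpha$, $\beta$, and $\gamma$. You supply more detail than the paper in verifying those rules and in the final reduction via $\delta_j,\epsilon_j$; just delete the dangling ``$\geq$'' after $\delta_j=\beta_j-\alpha_j$, since, as you note yourself, $\delta_j$ can be negative.
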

	
	\begin{proof}
		The first two formulas separate the allowed sets according to the state of the root. A set counted by $\gamma(H,r)$ contains no neighbor of $r$, so adjoining $r$ produces a set counted by $\alpha(H,r)$; hence $\alpha\leq\gamma$.  The condition that $r$ is not dismissing is
		\[
		\min\{\beta,\gamma\}<\min\{1+\alpha,\beta\},
		\]
		and, because these quantities are integers and $\alpha\leq\gamma$, this is equivalent to $\gamma=\alpha<\beta$.
		
		For the one-sum of two rooted graphs, write their costs as $\alpha_j,\beta_j,\gamma_j$ for $j=1,2$.  Then
		\[
		\alpha=\alpha_1+\alpha_2,
		\qquad
		\gamma=\gamma_1+\gamma_2,
		\]
		and
		\[
		\beta=\min\{\beta_1+\beta_2,\beta_1+\gamma_2,\gamma_1+\beta_2\}.
		\]
		Thus $\gamma=\alpha<\beta$ holds exactly when $\gamma_j=\alpha_j<\beta_j$ for both summands.
	\end{proof}
	
	\begin{proposition}\label{prop:7}
		Let $G$ be a connected unicyclic graph with unique cycle $C=c_0c_1\cdots c_{m-1}c_0$. After deleting the cycle edges, let $(T_j,c_j)$ be the rooted tree component containing $c_j$, and put $\Delta=\Ind(G)$.  The following are equivalent:
		\begin{enumerate}[label=\textup{(\roman*)}]
			\item either $m\not\equiv1\pmod3$, or $m=3q+1$ and some $c_j$ is dismissing in $\Ind(T_j)$;
			\item $\Delta$ is strongly vertex dismissible;
			\item $\Delta$ is vertex dismissible;
			\item $\Delta$ is scalable;
			\item $\Delta$ is initially Cohen--Macaulay over some field;
			\item $\Delta$ is initially Cohen--Macaulay over every field.
		\end{enumerate}
	\end{proposition}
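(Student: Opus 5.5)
The implications (ii)$\Rightarrow$(iii)$\Rightarrow$(iv)$\Rightarrow$(vi)$\Rightarrow$(v) come from Corollary~\ref{cor:2}. So the work is to prove (v)$\Rightarrow$(i) and (i)$\Rightarrow$(ii).

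For (i)$\Rightarrow$(ii) we look for a cycle vertex to start the recursion.
\begin{itemize}
\item Suppose some cycle vertex $c_j$ is dismissing in $\Ind(G)$. By Proposition~\ref{prop:6}, $\del(c_j)=\Ind(G-c_j)$ and $\link(c_j)=\Ind(G-N_G[c_j])$. Both $G-c_j$ and $G-N_G[c_j]$ are forests, so both complexes are strongly vertex dismissible by Lemma~\ref{lem:5}. Hence $\Delta$ is strongly vertex dismissible.
\item It therefore suffices to show that (i) forces some $c_j$ to be dismissing in $\Ind(G)$.
\end{itemize}

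To decide this we use Lemma~\ref{lem:7}. View $G$ rooted at $c_j$ as the one-sum at $c_j$ of $(T_j,c_j)$ and the rooted graph $(G_j',c_j)$, where $G_j'$ is the cycle with the trees $T_k$ ($k\neq j$) attached. By Lemma~\ref{lem:7}:
\begin{itemize}
\item If $c_j$ is dismissing in $\Ind(T_j)$, then it is dismissing in $\Ind(G)$. This settles the second alternative of (i).
\item If every $c_j$ fails to be dismissing in its tree, then $\gamma_j=\alpha_j<\beta_j$ for each $(T_j,c_j)$.
\end{itemize}

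In the second case, run a transfer-matrix computation around the cycle. Each tree contributes a triple $(\alpha_j,\beta_j,\gamma_j)$. Whether $c_j$ is dismissing in $\Ind(G)$ is decided by comparing $\min\{\beta,\gamma\}$ with $\min\{1+\alpha,\beta\}$ for the whole graph rooted at $c_j$.

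The key normalization: when $\gamma_j=\alpha_j<\beta_j$, the tree behaves exactly like a bare vertex, up to the additive constant $\alpha_j$.
\begin{itemize}
\item Choosing the root costs $\alpha_j+1$.
\item Leaving the root undominated inside the tree costs $\alpha_j$.
\item Dominating the root from inside the tree costs $\beta_j\geq\alpha_j+1$, which is never better than dominating it from the cycle or choosing it.
\end{itemize}
Here "undominated" means it must be dominated by a cycle neighbour. Subtracting $\sum_j\alpha_j$, the question reduces to the bare cycle $C_m$. For $C_m$ we have $i(C_m-c)=i(P_{m-1})=\lceil (m-1)/3\rceil\geq\lceil m/3\rceil=i(C_m)$ exactly when $m\not\equiv1\pmod 3$. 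So in this case some (indeed every) $c_j$ is dismissing iff $m\not\equiv 1$.

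This normalization is the main obstacle. It needs a careful argument that the minimization over the cycle with these replaced costs equals the cycle minimization plus $\sum\alpha_j$, with the $\beta_j$ option provably never strictly better. I would verify this first by a direct comparison of cycle DP states.

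For (v)$\Rightarrow$(i), suppose (i) fails: $m=3q+1$ and no $c_j$ is dismissing in $\Ind(T_j)$. We must show that $\Delta^{[b]}$ is not Cohen--Macaulay over any field.
\begin{itemize}
\item First choice: pass to links until only the cycle remains. In each tree pick a minimum set $D_j$ realizing $\gamma_j=\alpha_j$: it excludes $c_j$, dominates $T_j-c_j$, and does not dominate $c_j$.
\item Let $\sigma=\bigcup_j D_j$. This is independent, since the trees are disjoint and no $D_j$ meets a cycle vertex or its tree neighbours. It leaves exactly the cycle undominated, so $\link_\Delta(\sigma)=\Ind(C_m)$.
\item By the normalization above, $b=\indim\Delta=\sum_j\alpha_j+i(C_m)-1$, and $\sigma$ extends to a minimum maximal independent set. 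Hence $\sigma$ lies in a $b$-dimensional facet.
\item Proposition~\ref{prop:4} then makes the pure initial skeleton of $\Ind(C_{3q+1})$ Cohen--Macaulay over $\K$, contradicting Lemma~\ref{lem:6}.
\end{itemize}
This finishes the cycle of implications.
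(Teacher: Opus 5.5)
Your proposal is correct. The direction (v)$\Rightarrow$(i) is essentially the paper's argument. It uses the same minimum $\gamma$-sets $D_j$ and the same reduction $G-N_G[\sigma]=C_{3q+1}$, with Proposition~\ref{prop:4} doing the work of the paper's direct computation $\link_\Gamma(D)=\Ind(C_{3q+1})^{[q]}$. The real difference is in (i)$\Rightarrow$(ii) when $m\not\equiv1\pmod3$, and in how you obtain $i(G)=\sum_j\alpha_j+i(C_m)$.

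The paper avoids your normalization altogether. For $m\not\equiv1$ it inducts on $|V(G)|$: it deletes the neighbour $v$ of a leaf, which is shedding and hence dismissing, until only the bare cycle remains. For the lower bound on $i(G)$ it uses the counting inequality $3q+1\le3|\mathcal S|+|\mathcal B|$, which charges each internally dominated root at least $\beta_j\ge\alpha_j+1$ rather than exchanging it away.

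Your route proves more. Under (i), a cycle vertex is itself dismissing, so one deletion--link step lands in forests. When no root is dismissing in its tree, every $c_j$ is dismissing in $\Ind(G)$ exactly when $m\not\equiv1\pmod3$. A single principle then serves both directions.

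The price is the normalization you flagged. It does hold, by a short exchange on a root $c_j$ whose tree part $D\cap V(T_j)$ is in the $\beta$-state:
\begin{itemize}
\item If some cycle neighbour of $c_j$ lies in $D$, replace $D\cap V(T_j)$ by a minimum $\gamma$-set. This is strictly cheaper, and $c_j$ stays dominated from the cycle.
\item If no cycle neighbour of $c_j$ lies in $D$, replace $D\cap V(T_j)$ by a minimum $\alpha$-set, which contains $c_j$. The cost is $\alpha_j+1\le\beta_j$, independence is preserved, and domination only improves.
\end{itemize}
The same exchange applies on the path component of $G-c_j$. Combined with $i(T_j-c_j)=\min\{\beta_j,\gamma_j\}=\alpha_j$, it gives $i(G-c_j)=\sum_k\alpha_k+i(P_{m-1})$, which is exactly the comparison you need. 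You should write this out, since it is the one step your argument leaves as a sketch.
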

	
	\begin{proof}
		We first prove (i) implies (ii).  Suppose that $m\not\equiv1\pmod3$ and induct on $|V(G)|$.  If $G=C_m$, then for every $v\in V(C_m)$,
		\[
		\del_{\Ind(C_m)}(v)=\Ind(P_{m-1}),
		\qquad
		\link_{\Ind(C_m)}(v)=\Ind(P_{m-3}),
		\]
		and
		\[
		i(C_m-v)=\left\lceil\frac{m-1}{3}\right\rceil
		\geq \left\lceil\frac m3\right\rceil=i(C_m)
		\]
		exactly when $m\equiv0,2\pmod3$.  Thus $v$ is dismissing, and both branches are strongly vertex dismissible by Lemma~\ref{lem:5}.
		
		If $G$ is not a cycle, choose a leaf $u$ with neighbor $v$.  The vertex $v$ is shedding in $\Ind(G)$ and therefore dismissing by Corollary~\ref{cor:1}.  Every component of $G-v$ and $G-N_G[v]$ is a forest or a smaller connected unicyclic graph with the same cycle.  The induction hypothesis, Lemma~\ref{lem:5}, and Corollary~\ref{cor:7} show that both resulting independence complexes are strongly vertex dismissible.
		
		Now let $m=3q+1$ and suppose that some $c_j$ is dismissing in $\Ind(T_j)$.  View $G$ as the rooted one-sum of $T_j$ and the rest of $G$, both rooted at $c_j$.  By Lemma~\ref{lem:7}, the vertex $c_j$ is dismissing in $\Ind(G)$.  Both $G-c_j$ and $G-N_G[c_j]$ are forests.  Apply Lemma~\ref{lem:5} and Corollary~\ref{cor:7}.
		
		The implications already proved give
		\[
		\textup{(ii)}\Longrightarrow\textup{(iii)}
		\Longrightarrow\textup{(iv)}
		\Longrightarrow\textup{(vi)}
		\Longrightarrow\textup{(v)}.
		\]
		Assume that (i) fails.  Then $m=3q+1$ and every $c_j$ is not dismissing in $\Ind(T_j)$.  Write
		\[
		\alpha_j=\alpha(T_j,c_j),\qquad
		\beta_j=\beta(T_j,c_j),\qquad
		\gamma_j=\gamma(T_j,c_j).
		\]
		By Lemma~\ref{lem:7}, $\gamma_j=\alpha_j<\beta_j$.  Put $k_j=\alpha_j$ and $\kappa=\sum_jk_j$.  We claim that
		\begin{equation}\label{eq:1}
			i(G)=\kappa+q+1.
		\end{equation}
		A selected root contributes at least $k_j+1$, an excluded root dominated inside its tree contributes at least $\beta_j\geq k_j+1$, and a root undominated inside its tree contributes at least $\gamma_j=k_j$ and must have a selected cycle neighbor.  If $\mathcal S$ is the set of selected roots and $\mathcal B$ is the set of roots dominated internally, then $\mathcal S$ is independent and every root outside $\mathcal S\cup\mathcal B$ has a neighbor in $\mathcal S$.  Therefore
		\[
		3q+1\leq3|\mathcal S|+|\mathcal B|,
		\]
		so $|\mathcal S|+|\mathcal B|\geq q+1$.  This proves the lower bound in~\eqref{eq:1}. For the reverse inequality, combine an independent dominating set of $C_{3q+1}$ of size $q+1$ with minimum sets of state $\alpha$ at the selected roots and minimum sets of state $\gamma$ at the remaining roots.
		
		Choose minimum sets $D_j$ of state $\gamma$ and put $D=\bigcup_jD_j$.  Then $D$ is independent, $|D|=\kappa$, it dominates every vertex outside the cycle, and it dominates no cycle vertex.  Hence
		\[
		G-N_G[D]=C_{3q+1}.
		\]
		Let $\Gamma=\Delta^{[i(G)-1]}$.  Since $|D|=\kappa$ and $i(G)=\kappa+q+1$, a facet of $\link_\Gamma(D)$ is obtained exactly by adjoining to $D$ an independent $(q+1)$-set of $G-N_G[D]=C_{3q+1}$.  Conversely, every independent $(q+1)$-set of this remaining cycle gives an independent set of $G$ of size $i(G)$ and hence a facet of $\Gamma$ containing $D$.  Therefore
		\[
		\link_\Gamma(D)=\Ind(C_{3q+1})^{[q]}.
		\]
		By Lemma~\ref{lem:6}, this link is not Cohen--Macaulay over $\K$ for every field $\K$.  Hence $\Gamma$ is not Cohen--Macaulay, contradicting (v).  Thus (v) implies (i).
	\end{proof}
	
	A graph is a \emph{pseudoforest} if every connected component contains at most one cycle.
	
	\begin{theorem}\label{thm:6}
		Let $G$ be a finite pseudoforest.  For every unicyclic component $U$, write its unique cycle as $c_0c_1\cdots c_{m_U-1}c_0$, and let $(T_{U,j},c_j)$ be the rooted tree obtained after deleting the cycle edges.  The following are equivalent:
		\begin{enumerate}[label=\textup{(\roman*)}]
			\item for every unicyclic component $U$, either $m_U\not\equiv1\pmod3$, or some $c_j$ is dismissing in $\Ind(T_{U,j})$;
			\item $\Ind(G)$ is strongly vertex dismissible;
			\item $\Ind(G)$ is vertex dismissible;
			\item $\Ind(G)$ is scalable;
			\item $\Ind(G)$ is initially Cohen--Macaulay over some field;
			\item $\Ind(G)$ is initially Cohen--Macaulay over every field.
		\end{enumerate}
	\end{theorem}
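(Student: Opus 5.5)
The plan is to reduce to connected components and then invoke Proposition~\ref{prop:7} componentwise. Write $G=G_1\mathbin{\dot\cup}\cdots\mathbin{\dot\cup}G_s$, where each component $G_j$ is either a tree or a connected unicyclic graph. Then
\[
\Ind(G)=\Ind(G_1)*\cdots*\Ind(G_s).
\]
The chain (ii)$\Rightarrow$(iii)$\Rightarrow$(iv)$\Rightarrow$(vi)$\Rightarrow$(v) is Corollary~\ref{cor:2}, so it remains to prove (i)$\Rightarrow$(ii) and (v)$\Rightarrow$(i).

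\emph{(i)$\Rightarrow$(ii).} If $G_j$ is a tree, then $\Ind(G_j)$ is strongly vertex dismissible by Lemma~\ref{lem:5}. If $G_j=U$ is unicyclic, condition (i) for $U$ is exactly condition (i) of Proposition~\ref{prop:7}, so $\Ind(U)$ is strongly vertex dismissible. Corollary~\ref{cor:7}, which rests on Theorem~\ref{thm:1}, then shows that $\Ind(G)$ is strongly vertex dismissible.

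\emph{(v)$\Rightarrow$(i).} Assume (v) holds over a field $\K$, and let $U$ be any unicyclic component. Let $\Omega=\Ind(G)^{[b]}$, where $b=i(G)-1$, and write $\Ind(G)=\Ind(U)*\Ind(G')$ with $G'$ the union of the remaining components. Proposition~\ref{prop:5} states that if the pure initial skeleton of a join is Cohen--Macaulay over $\K$, then so is the pure initial skeleton of each factor. Hence $\Ind(U)^{[i(U)-1]}$ is Cohen--Macaulay over $\K$, that is, $\Ind(U)$ is initially Cohen--Macaulay over $\K$. Proposition~\ref{prop:7}, (v)$\Rightarrow$(i), now gives condition (i) for $U$. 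Since $U$ was arbitrary, (i) holds.

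The one step needing care is this link argument inside Proposition~\ref{prop:5}. There one takes a minimum facet $F'$ of $\Ind(G')$, of dimension $\indim\Ind(G')$, and uses
\[
\link_\Omega(F')=\Ind(U)^{[i(U)-1]}.
\]
This holds because $F'$ is maximal in $\Ind(G')$: any $b$-face containing $F'$ is $F'\cup A$ with $A$ an independent set of $U$ of size exactly $i(U)$. Links of a pure Cohen--Macaulay complex are Cohen--Macaulay, which justifies the transfer. Apart from this, the argument is bookkeeping; the substantive work is already contained in Proposition~\ref{prop:7}.
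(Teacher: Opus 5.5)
Your proposal is correct and matches the paper's proof. You prove (i)$\Rightarrow$(ii) by combining Lemma~\ref{lem:5}, Proposition~\ref{prop:7} and the join statement Corollary~\ref{cor:7}, and you take the chain (ii)$\Rightarrow\cdots\Rightarrow$(v) from Corollary~\ref{cor:2}. For the converse you use Proposition~\ref{prop:5} to pass Cohen--Macaulayness of the pure initial skeleton to each join factor, then apply Proposition~\ref{prop:7} to each unicyclic component. Your explicit check that $\link_\Omega(F')=\Ind(U)^{[i(U)-1]}$ for a minimum facet $F'$ of $\Ind(G')$ correctly unpacks Proposition~\ref{prop:5}, including the case $G=U$ with $F'=\emptyset$.
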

	
	\begin{proof}
		Tree components have vertex decomposable independence complexes by Lemma~\ref{lem:5}.  The connected unicyclic components are classified by Proposition~\ref{prop:7}.  If (i) holds, every component is strongly vertex dismissible and their join is strongly vertex dismissible by Corollary~\ref{cor:7}.  The implications already proved give
		\[
		\textup{(ii)}\Longrightarrow\textup{(iii)}
		\Longrightarrow\textup{(iv)}
		\Longrightarrow\textup{(vi)}
		\Longrightarrow\textup{(v)}.
		\]
		Conversely, Proposition~\ref{prop:5} shows that these three properties are inherited by each factor of a join.  For each unicyclic component, each of them implies condition (i) by Proposition~\ref{prop:7}.
	\end{proof}
	
	\begin{corollary}\label{cor:8}
		Let $m\geq3$ and put $\Delta_m=\Ind(C_m)$.  The following are equivalent:
		\begin{enumerate}[label=\textup{(\roman*)}]
			\item $m\not\equiv1\pmod3$;
			\item $\Delta_m$ is strongly vertex dismissible;
			\item $\Delta_m$ is vertex dismissible;
			\item $\Delta_m$ is scalable;
			\item $\Delta_m$ is initially Cohen--Macaulay over some field;
			\item $\Delta_m$ is initially Cohen--Macaulay over every field.
		\end{enumerate}
	\end{corollary}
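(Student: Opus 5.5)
This is the special case of Proposition~\ref{prop:7} in which $G=C_m$ is itself a connected unicyclic graph. After the cycle edges are deleted, each rooted tree $(T_j,c_j)$ is the single vertex $c_j$, and then $\Ind(T_j)=2^{\{c_j\}}$. The plan is therefore to check that condition (i) of Proposition~\ref{prop:7} reduces to $m\not\equiv1\pmod3$ when every tree is trivial. Once that is done, the equivalence of (ii)--(vi) with (i) is read off directly from that proposition.

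The only point to verify is that, for $m=3q+1$, no root $c_j$ is dismissing in $\Ind(T_j)$. Here $\Ind(T_j)$ is the $0$-simplex on $c_j$, so $\indim\Ind(T_j)=0$. On the other hand, $\del_{\Ind(T_j)}(c_j)=\{\emptyset\}$ has initial dimension $-1<0$. Hence $c_j$ is not dismissing.

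The same conclusion follows from Lemma~\ref{lem:7}. For the one-vertex rooted graph we have $\alpha=0$, $\gamma=0$ (the empty set dominates $H-r=\emptyset$ and does not dominate $r$), and $\beta=+\infty$. Thus $\gamma=\alpha<\beta$, and $c_j$ is not dismissing. Consequently, the second alternative in (i) of Proposition~\ref{prop:7} never occurs for a bare cycle, and that condition reads exactly $m\not\equiv1\pmod3$.

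There is no real obstacle beyond this convention check, namely handling the degenerate one-vertex tree in the definition of a dismissing vertex and in the costs $\alpha,\beta,\gamma$. As a consistency check, the direct computation in the proof of Proposition~\ref{prop:7} handles the case $G=C_m$ with $m\equiv0,2\pmod3$. For $m=3q+1$, the obstruction $\link_\Gamma(\emptyset)=\Ind(C_{3q+1})^{[q]}$ with $D=\emptyset$ and $\kappa=0$ is precisely Lemma~\ref{lem:6}. This confirms that the failure of (i) rules out (v) for every field.
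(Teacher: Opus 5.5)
Your proposal is correct and follows the paper's proof: specialize Proposition~\ref{prop:7} to $G=C_m$, where every attached rooted tree is a single vertex. In that case the root is not dismissing, since deleting it lowers the initial dimension from $0$ to $-1$ (equivalently, $i$ drops from $1$ to $0$), so condition (i) there reduces to $m\not\equiv1\pmod 3$.
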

	
	\begin{proof}
		In Proposition~\ref{prop:7}, every rooted tree attached to the cycle is the graph with one vertex.  Its root is not dismissing because deleting it reduces the independent domination number from one to zero.  The condition in Proposition~\ref{prop:7}(i) therefore reduces to $m\not\equiv1\pmod3$.
	\end{proof}
	
	\begin{remark}
		Wu et al.\ \cite[Corollary~1]{WuEtAl2019} proved that the cycle $C_m$ is independent domination stable exactly when $m\not\equiv1\pmod3$. Thus the residue pattern in Corollary~\ref{cor:8} was already known for independent domination stability. In the present setting, the same residue condition also characterizes strong vertex dismissibility and the three corresponding properties of the pure initial skeleton of the independence complex.
	\end{remark}
	
	\begin{corollary}\label{cor:9}
		Let $G$ be a graph of maximum degree at most two.  The six properties in Theorem~\ref{thm:6} hold if and only if no connected component is a cycle $C_{3q+1}$.  Therefore, $C_4,C_7,C_{10},\ldots$ are exactly the minimal obstructions with respect to induced subgraphs in this graph class.
	\end{corollary}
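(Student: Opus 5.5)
The plan is to reduce Corollary~\ref{cor:9} to Theorem~\ref{thm:6}, using the structure of graphs with maximum degree at most two. Such a graph is a disjoint union of paths (including isolated vertices and $P_0$-free components) and cycles, so it is a pseudoforest. Every unicyclic component is therefore a cycle $C_m$ with $m\geq3$. Each attached rooted tree $(T_{U,j},c_j)$ is the one-vertex graph.

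\textbf{Step 1: reduce condition (i) of Theorem~\ref{thm:6}.} As in the proof of Corollary~\ref{cor:8}, the root of a one-vertex tree is never dismissing. Deleting it gives $\Ind$ of the empty graph, which is $\{\emptyset\}$ with initial dimension $-1$, while the original initial dimension is $0$; equivalently $i$ drops from $1$ to $0$. So condition (i) holds exactly when every cycle component has length $m\not\equiv1\pmod3$, that is, when no component is a $C_{3q+1}$. Theorem~\ref{thm:6} then gives the equivalence with all six properties. Path components impose no condition, by Lemma~\ref{lem:5}.

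\textbf{Step 2: minimal obstructions.} This step needs some care about what "minimal with respect to induced subgraphs" means. Here it means: graphs in the class failing the properties, all of whose proper induced subgraphs satisfy them.

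First, each $C_{3q+1}$ fails the properties by Corollary~\ref{cor:8}. Every proper induced subgraph of $C_{3q+1}$ is a disjoint union of paths, so it satisfies the properties by Step 1 (or directly by Lemma~\ref{lem:5}). Hence each $C_{3q+1}$ is a minimal obstruction.

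Conversely, let $G$ be a graph of maximum degree at most two that fails the properties. By Step 1, some component of $G$ is a $C_{3q+1}$, and that component is an induced subgraph of $G$. If $G$ is minimal, then $G$ must equal this component, since otherwise the component would be a proper induced subgraph that also fails. So the minimal obstructions are exactly $C_4,C_7,C_{10},\ldots$.

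\textbf{Main obstacle.} Essentially none remains beyond Theorem~\ref{thm:6}. The one point to check is that the non-dismissing status of a single-vertex root is handled correctly under the conventions for $\{\emptyset\}$, and this is already settled in Corollary~\ref{cor:8}.
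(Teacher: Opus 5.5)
Your proof is correct and follows essentially the paper's route. The paper applies Lemma~\ref{lem:5} and Corollary~\ref{cor:8} to each path or cycle component, and notes that every proper induced subgraph of a cycle is a forest; you instead read off condition (i) of Theorem~\ref{thm:6} directly, using the one-vertex-root observation from Corollary~\ref{cor:8}, and spell out the minimality argument, which amounts to the same thing.
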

	
	\begin{proof}
		Every component is a path or a cycle.  Apply Lemma~\ref{lem:5} and Corollary~\ref{cor:8} on each component.  Every proper induced subgraph of a cycle is a forest.
	\end{proof}
	
	\section{Alexander dual ideals and degree conditions}\label{sec:6}
	
	Let $X$ be a fixed $n$-element ground set and let $R=\K[x_v:v\in X]$. Unless a smaller ground set is displayed explicitly, Alexander duals in this section are taken with respect to $X$. For a monomial ideal $I\subseteq R$ and an integer $\ell\geq0$, define
	\[
	I_{\ell}
	=\bigl\langle u\in I:u\text{ is squarefree and }\deg u=\ell\bigr\rangle.
	\]
	Thus $I_{\ell}$ is the ideal generated by the squarefree monomials of degree $\ell$.  It is not the degree-$\ell$ vector space component of $I$.  This notation also distinguishes the ideal subscript from the brackets used for pure skeleta.
	
	\begin{lemma}[cf. {\cite[Lemma 4.1]{Namiq2026ICM}}]\label{lem:8}
		Let $\Delta$ be a complex on $X$, let $b=\indim\Delta$, and put $J=I_{\Delta^{\vee}}$ and $d=n-b-1$. For every $-1\leq k\leq b$, with $\ell=n-k-1$, one has
		\[
		J_{\ell}=I_{(\Delta^{[k]})^{\vee}}
		=I_{(\Delta^k)^{\vee}}.
		\]
		In particular, $J_d=I_{(\Delta^{[b]})^{\vee}}$.
	\end{lemma}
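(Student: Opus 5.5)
The plan is to compare minimal generators on both sides, working only with the convention $I_{\Sigma^\vee}=(x^{X\setminus F}:F\in\F(\Sigma))$ and with squarefree monomials. The second equality $I_{(\Delta^{[k]})^\vee}=I_{(\Delta^k)^\vee}$ follows at once from Lemma~\ref{lem:1}: since $k\le b$, the complexes $\Delta^{[k]}$ and $\Delta^k$ coincide. So the work is in the first equality. Because $k\le b$, every facet of $\Delta^{[k]}$ has exactly $k+1$ elements, so
\[
I_{(\Delta^{[k]})^\vee}=\bigl(x^{X\setminus A}: A\in\Delta,\ |A|=k+1\bigr),
\]
and this ideal is generated in the single degree $n-k-1=\ell$.

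The key observation is a description of the squarefree monomials in $J$. A squarefree monomial $x^C$ lies in $J$ if and only if $X\setminus F\subseteq C$ for some facet $F$ of $\Delta$, that is, if and only if $X\setminus C\subseteq F$ for some facet $F$, that is, if and only if $X\setminus C\in\Delta$. Thus $J_\ell$ is generated by the monomials $x^C$ with $|C|=\ell$ and $X\setminus C\in\Delta$. Setting $A=X\setminus C$, these are exactly the monomials $x^{X\setminus A}$ with $A\in\Delta$ and $|A|=n-\ell=k+1$. This matches the generator list above, so the two ideals have the same generators and are equal. Taking $k=b$ gives $\ell=d$, which is the final assertion.

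There is essentially no hard step. The only care needed is in edge cases and conventions. For $k=-1$ we have $\ell=n$, and both sides are generated by $x^X$, since $\emptyset\in\Delta$ and $\Delta^{[-1]}=\{\emptyset\}$ has facet $\emptyset$. The ground-set issue also needs a word: vertices in $X\setminus V(\Delta)$ are harmless, because the argument never uses $V(\Delta)$. It is also worth stressing that $J_\ell$ means the ideal generated by squarefree monomials of degree exactly $\ell$, which is the notation fixed at the start of the section, so no higher-degree generators of $J$ intervene. If anything deserves the label of main obstacle, it is only being careful that $\Delta^{[k]}$ has all $k$-faces of $\Delta$ as facets. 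This holds because every $k$-face lies in a $b$-face, as in Lemma~\ref{lem:1}.
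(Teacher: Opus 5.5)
Your proposal is correct and is essentially the paper's argument: the squarefree degree-$\ell$ monomials of $J$ are exactly the $x^{X\setminus A}$ with $A$ a $k$-face of $\Delta$, which are the generators of $I_{(\Delta^{[k]})^{\vee}}$, and Lemma~\ref{lem:1} gives the second equality. One small correction: the facets of $\Delta^{[k]}$ are all $k$-faces of $\Delta$ by the definition of the pure skeleton, not by Lemma~\ref{lem:1}, which you need only to identify $\Delta^{[k]}$ with $\Delta^k$.
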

	
	\begin{proof}
		A squarefree monomial of degree $\ell=n-k-1$ has the form $x^{X\setminus A}$ for a $(k+1)$-element subset $A\subseteq X$.  Since
		\[
		J=(x^{X\setminus F}:F\in\F(\Delta)),
		\]
		we have
		\[
		x^{X\setminus A}\in J
		\quad\Longleftrightarrow\quad
		A\subseteq F\text{ for some }F\in\F(\Delta)
		\quad\Longleftrightarrow\quad
		A\in\Delta.
		\]
		Thus the squarefree degree-$\ell$ monomials in $J$ are exactly the complements of the $k$-faces of $\Delta$.  Those $k$-faces are the facets of $\Delta^{[k]}$, so their complements are precisely the minimal generators of $I_{(\Delta^{[k]})^{\vee}}$.  The second equality follows from Lemma~\ref{lem:1}.
	\end{proof}
	
	A monomial ideal is \emph{vertex splittable} if it is $0$, $R$, or principal, or it has a recursive decomposition
	\[
	I=xI_1+I_2,
	\qquad I_2\subseteq I_1,
	\qquad
	\mathcal G(I)=x\mathcal G(I_1)\,\dot\cup\,\mathcal G(I_2),
	\]
	where $I_1$ and $I_2$ are vertex splittable ideals in the polynomial ring without $x$ \cite{MoradiKhoshAhang2016}.  Linear quotients have their usual meaning.
	
	For a nonzero monomial ideal $I$, write
	\[
	\deg I=\max\{\deg u:u\in\mathcal G(I)\}.
	\]
	We use the following terminology. The terms \emph{vertex divisible} and \emph{degree quotients} refer to the corresponding conditions on the squarefree ideal in the maximum generator degree. The term \emph{degree resolution} was introduced in \cite[Definition~3.1]{Namiq2026Skeletons}, and we use it in the same sense.
	\begin{itemize}
		\item A squarefree monomial ideal $I$ with $d=\deg I$ is \emph{vertex divisible} if $I_d$ is vertex splittable.
		\item A squarefree monomial ideal $I$ with $d=\deg I$ \emph{has degree quotients} if $I_d$ has linear quotients.  Thus this condition is imposed on the squarefree degree-$d$ ideal rather than on the ideal itself.
		\item Following \cite[Definition~3.1]{Namiq2026Skeletons}, a monomial ideal $I$ \emph{has a degree resolution} if $\reg I=\deg I$. Equivalently, its graded Betti numbers satisfy $\beta_{i,j}(I)=0$ for $j>\deg I+i$.
	\end{itemize}
	
	We also use the following recursive notion. A squarefree monomial ideal $I\subseteq\K[x_v:v\in Y]$ is \emph{strongly vertex divisible} if it is $0$, the unit ideal, or principal, or if there is a variable $x$ such that
	\[
	\deg(I:x)\leq\deg I-1
	\]
	and both $(I:x)$ and $I\cap\K[x_v:v\in Y\setminus\{x\}]$ are strongly vertex divisible in the smaller polynomial ring.  The displayed condition calls $x$ a \emph{dividing variable}.
	
	If $\Delta$ is a complex on the ground set $Y$ and $I=I_{\Delta^{\vee_Y}}$, then complementing facets gives
	\[
	I=x(I:x)+\bigl(I\cap\K[x_v:v\in Y\setminus\{x\}]\bigr),
	\]
	where the two smaller ideals are the Alexander dual ideals, taken over $Y\setminus\{x\}$, of $\del_\Delta(x)$ and $\link_\Delta(x)$, respectively.  Moreover, $x$ is a dismissing vertex of $\Delta$ exactly when $\deg(I:x)\leq\deg I-1$.  Induction therefore gives
	\[
	\Delta\text{ is strongly vertex dismissible}
	\quad\Longleftrightarrow\quad
	I_{\Delta^{\vee_Y}}\text{ is strongly vertex divisible}.
	\]
	This recursive statement concerns the Alexander dual ideal itself. By contrast, vertex divisibility and the criterion for scalability below depend only on the squarefree ideal in the maximum generator degree.
	
	\begin{lemma}\label{lem:9}
		Let $S=\K[x_v:v\in Y]$, let $S'=S[y_1,\ldots,y_s]$, put $\mu=y_1\cdots y_s$, and let $I\subseteq S$ be a monomial ideal.
		\begin{enumerate}[label=\textup{(\roman*)}]
			\item $I$ is vertex splittable if and only if $\mu I\subseteq S'$ is vertex splittable.
			\item $I$ has linear quotients if and only if $\mu I$ has linear quotients.
			\item If $I$ is generated in degree $\ell$, then $I$ has an $\ell$-linear resolution if and only if $\mu I$ has an $(\ell+s)$-linear resolution.
		\end{enumerate}
	\end{lemma}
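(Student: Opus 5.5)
We prove the three parts separately. Throughout we use that $\mu$ is a product of new variables not appearing in $S$. Consequently multiplication by $\mu$ is a degree-shifting isomorphism of $\mathbb Z^{|Y|}$-graded $S$-modules, $I\cong \mu I(-s)$, and the minimal generators satisfy $\mathcal G(\mu I)=\mu\,\mathcal G(I)$.

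For (iii), observe that $S'$ is flat over $S$. Hence a minimal graded free resolution $\mathbb F$ of $I$ over $S$ gives a minimal resolution $\mathbb F\otimes_S S'$ of $IS'$ with the same graded Betti numbers. Multiplication by $\mu$, a nonzerodivisor of degree $s$, gives $\mu IS'\cong IS'(-s)$. Therefore
\[
\beta^{S'}_{i,j}(\mu I)=\beta^S_{i,j-s}(I).
\]
The ideal $\mu I$ is generated in degree $\ell+s$, and it has an $(\ell+s)$-linear resolution exactly when $I$ has an $\ell$-linear resolution.

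For (ii), fix an order $u_1,\dots,u_r$ of $\mathcal G(I)$ and use the same order for $\mu u_1,\dots,\mu u_r$. Colons satisfy
\[
(\mu u_1,\dots,\mu u_{k-1}):\mu u_k=(u_1,\dots,u_{k-1}):u_k,
\]
because $\mu u_j/\gcd(\mu u_j,\mu u_k)=u_j/\gcd(u_j,u_k)$, computed in $S'$. The right-hand side is the extension of the $S$-colon. An ideal generated by variables in $S$ stays generated by the same variables in $S'$. Conversely, any generating set by variables of the extended ideal must use variables of $S$, since it is generated by elements of $S$ and no $y_i$ divides those generators. So linear quotients transfer in both directions with the same order.

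For (i), the approach is induction. It suffices to treat $s=1$ and iterate, since $\mu I=y_s(\cdots(y_1I))$. For the forward direction we induct on the splitting recursion of $I$. If $I$ is $0$ or principal, then so is $yI$. If $I=S$, then $yI=(y)$ is principal. If $I=xI_1+I_2$ is a vertex splitting, then $yI=x(yI_1)+yI_2$ with $yI_2\subseteq yI_1$, and the generators remain disjoint. By induction on the number of variables, $yI_1$ and $yI_2$ are vertex splittable in the ring without $x$.

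The converse is the main obstacle. Given a vertex splitting $yI=xJ_1+J_2$ of $yI$, we split into two cases.
\begin{itemize}
\item If $x=y$, then $J_2$ has no generators, because every generator is divisible by $y$. So $J_2=0$ and $J_1=I$, which is vertex splittable by hypothesis.
\item If $x\neq y$, every generator of $J_1$ and $J_2$ is divisible by $y$. Hence $J_1=yI_1$ and $J_2=yI_2$ with $I_2\subseteq I_1$, and $I=xI_1+I_2$ with the induced disjoint generator decomposition. Induction on the number of variables then shows $I_1$ and $I_2$ are vertex splittable.
\end{itemize}
Two points need care. First, when $J_1=0$ or $J_2=0$, the corresponding $I_j$ is $0$. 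Second, the base case: if $yI$ is principal, then $I$ is principal; and $yI$ is never the unit ideal.
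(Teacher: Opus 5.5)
Your proof is correct and essentially follows the paper: reduce to one new variable, compare colon ideals for (ii), shift graded Betti numbers for (iii), and for the converse of (i) factor $y$ out of a splitting according to whether the splitting variable is $y$ or an old variable. The only real difference is the forward direction of (i), where the paper uses the one-step splitting $yI=y\cdot I+0$ on the new variable $y$ instead of lifting a splitting of $I$ inductively (your lifting also works); your preamble's $I\cong\mu I(-s)$ should read $\mu I\cong I(-s)$, which is the form you use correctly in (iii).
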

	
	\begin{proof}
		It is enough to prove the statement after adjoining one variable $y$.  If $I$ is vertex splittable, then $yI=yI+0$ is a vertex splitting with splitting variable $y$.  Conversely, factor $y$ from a vertex splitting tree for $yI$.  If the root splits on $y$, the second branch is $0$ and the first branch is $I$.  If the root splits on an old variable, every minimal generator in both branches is divisible by $y$; induction on the height of the splitting tree factors $y$ from the two branches and gives a vertex splitting of $I$.  This proves (i).
		
		For a minimal generating order $u_1,\ldots,u_t$ of $I$, the order $yu_1,\ldots,yu_t$ satisfies
		\[
		(yu_1,\ldots,yu_{j-1}):yu_j
		=(u_1,\ldots,u_{j-1}):u_j,
		\]
		so (ii) follows in both directions.  Finally, $yI\cong (I\otimes_S S')(-1)$ as graded $S'$-modules, so its graded Betti numbers are those of $I$ with every internal degree shifted by one.  This proves (iii).
	\end{proof}
	
	Part~(iii) of the next theorem uses terminology from earlier work: initially Cohen--Macaulayness is from \cite{Namiq2026ICM}, and degree resolutions are from \cite{Namiq2026Skeletons}. We include these known equivalences alongside the vertex divisibility and scalability correspondences in parts~(i)--(ii).
	
	\begin{theorem}\label{thm:7}
		With the notation of Lemma~\ref{lem:8}, the following statements hold.
		\begin{enumerate}[label=\textup{(\roman*)}]
			\item $\Delta$ is vertex dismissible if and only if $J$ is vertex divisible, equivalently, $J_d$ is vertex splittable.
			\item $\Delta$ is scalable if and only if $J$ has degree quotients, equivalently, $J_d$ has linear quotients.
			\item $\Delta$ is initially Cohen--Macaulay over $\K$ if and only if $J$ has a degree resolution, equivalently, $J_d$ has a $d$-linear resolution over $\K$.
		\end{enumerate}
	\end{theorem}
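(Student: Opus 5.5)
The plan is to reduce all three parts to known Alexander-dual correspondences for the pure complex $\Delta^{[b]}$, using Lemma~\ref{lem:8} as the bridge. The first step is to identify $\deg J$ with $d=n-b-1$. Since $J=(x^{X\setminus F}:F\in\F(\Delta))$ and these generators are minimal, $\deg J=\max_F(n-|F|)=n-(b+1)=d$, the maximum being attained at the $b$-dimensional facets. Consequently the phrases ``vertex divisible'', ``has degree quotients'' and ``has a degree resolution'' all refer to $J_d$ or to $\deg J=d$. The first ``equivalently'' in (i) and (ii) is then just the definition. By Lemma~\ref{lem:8} we also have $J_d=I_{(\Delta^{[b]})^\vee}$, where the dual is taken over $X$.

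Next I handle the ground set. The complex $\Delta^{[b]}$ has vertex set $V'=V(\Delta^{[b]})$, which may be strictly smaller than $X$. Let $I'$ be the Alexander dual ideal of $\Delta^{[b]}$ over $V'$. Complementing over $X$ instead of $V'$ multiplies every generator by $\mu=\prod_{v\in X\setminus V'}x_v$, so $J_d=\mu I'$. Lemma~\ref{lem:9} then transfers vertex splittability, linear quotients, and linear resolutions, with the degree shift handled in part (iii), between $J_d$ and $I'$. It therefore suffices to work with $I'$ for the pure complex $\Delta^{[b]}$ on its own vertex set.

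Now I apply the classical pure correspondences to $\Gamma=\Delta^{[b]}$ and $I'$:
\begin{itemize}
\item for (i), Moradi--Khosh-Ahang: $\Gamma$ is vertex decomposable if and only if $I_{\Gamma^\vee}$ is vertex splittable;
\item for (ii), Herzog--Hibi--Zheng: $\Gamma$ is shellable if and only if $I_{\Gamma^\vee}$ has linear quotients, and for a pure complex these quotients are linear in the equigenerated degree;
\item for the linear-resolution half of (iii), Eagon--Reiner: $\Gamma$ is Cohen--Macaulay over $\K$ if and only if $I_{\Gamma^\vee}$ has a linear resolution.
\end{itemize}
Combined with Definition~\ref{def:4}, these give (i), (ii), and the statement of (iii) concerning $J_d$.

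The remaining piece, and the main obstacle, is showing in (iii) that $J$ itself has a degree resolution, that is $\reg J=d$, if and only if $J_d$ has a $d$-linear resolution. Here I would cite \cite{Namiq2026ICM}, where initially Cohen--Macaulayness of $\Delta$ is shown to be equivalent to $J$ having a degree resolution. If a self-contained argument were wanted, I would use Terai's formula $\reg I_{\Delta^\vee}=\pdim\K[\Delta]$ together with the Auslander--Buchsbaum formula. These give $\reg J=n-\depth\K[\Delta]$, and since $\depth\K[\Delta]\le b+1$ always (depth is at most the minimal facet dimension plus one), $\reg J=d$ holds if and only if $\depth\K[\Delta]=b+1$. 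Hibi's skeleton theorem states that $\depth\K[\Delta]-1$ is the largest $k$ for which $\Delta^{k}$ is Cohen--Macaulay. By Lemma~\ref{lem:1} this makes $\depth\K[\Delta]=b+1$ equivalent to Cohen--Macaulayness of $\Delta^{[b]}$, which closes the circle.
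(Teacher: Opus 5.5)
Your proposal is correct and takes essentially the paper's route. It reduces $J_d=I_{(\Delta^{[b]})^{\vee}}$ to the vertex set of $\Delta^{[b]}$ via Lemma~\ref{lem:8} and Lemma~\ref{lem:9}, then applies Moradi--Khosh-Ahang for (i) and Eagon--Reiner with the earlier initially Cohen--Macaulay results for (iii). The differences are minor: you cite \cite{HerzogHibiZheng2004} for (ii) where the paper checks the colon-ideal/shelling criterion directly, and your optional Terai--Auslander--Buchsbaum--Hibi argument for $\reg J=d$ replaces the citation of \cite[Proposition~3.3]{Namiq2026Skeletons} and is the argument the paper itself uses in Corollary~\ref{cor:12}.
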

	
	\begin{proof}
		Put $\Gamma=\Delta^{[b]}$, let $Y=V(\Gamma)$, and set $Z=X\setminus Y$. By Lemma~\ref{lem:8},
		\[
		J_d=I_{\Gamma^{\vee}}
		=x^Z I_{\Gamma^{\vee_Y}}.
		\]
		Degrees over $X$ are obtained from degrees over $Y$ by adding $|Z|$.  Thus Lemma~\ref{lem:9} reduces all three statements to the polynomial ring on $Y=V(\Gamma)$.  Moradi and Khosh-Ahang proved that a simplicial complex is vertex decomposable if and only if the Stanley--Reisner ideal of its Alexander dual is vertex splittable \cite[Theorem~2.3]{MoradiKhoshAhang2016}.  This gives (i).
		
		For (ii), by definition $J$ has degree quotients exactly when $J_d$ has linear quotients.  Let $F_1,\dots,F_t$ be the facets of the pure complex $\Gamma$ and let $u_i=x^{Y\setminus F_i}$.  For each $j>1$,
		\[
		(u_1,\dots,u_{j-1}):u_j
		=\bigl(x^{F_j\setminus F_i}:i<j\bigr).
		\]
		This colon ideal is generated by variables if and only if, for every $i<j$, there is $k<j$ such that $F_j\setminus F_k$ is a singleton contained in $F_j\setminus F_i$.  This is exactly the pure shelling criterion for the order $F_1,\dots,F_t$.
		
		Part~(iii) restates earlier results in the notation used here. The equivalence between initially Cohen--Macaulayness and a degree resolution of the Alexander dual ideal itself is \cite[Proposition~4.4]{Namiq2026ICM}. Equivalently, \cite[Proposition~4.6]{Namiq2026ICM} identifies initially Cohen--Macaulayness with Cohen--Macaulayness of $\Gamma$, and the Eagon--Reiner theorem \cite{EagonReiner1998} identifies the latter with a $d$-linear resolution of $J_d$. Finally, \cite[Proposition~3.3]{Namiq2026Skeletons} gives the equivalence between a degree resolution of $J$ and a linear resolution of the squarefree degree-$d$ ideal $J_d$.
	\end{proof}
	
	\begin{corollary}\label{cor:11}
		If $\Delta$ is strongly vertex dismissible, then for every $-1\leq k\leq b$ the ideal $J_{n-k-1}$ is vertex splittable, has linear quotients, and has an $(n-k-1)$-linear resolution over every field.
	\end{corollary}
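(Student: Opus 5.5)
The plan is to pass through the pure skeleta and then apply the dictionary of Theorem~\ref{thm:7} in each degree separately. Fix $-1\leq k\leq b$ and put $\ell=n-k-1$. By Theorem~\ref{thm:3}, strong vertex dismissibility of $\Delta$ implies that $\Delta^{[k]}=\Delta^k$ is vertex decomposable, and this complex is pure of dimension $k$. By Lemma~\ref{lem:8}, $J_\ell=I_{(\Delta^{[k]})^{\vee}}$, the Alexander dual ideal of this pure vertex decomposable complex, taken over the ground set $X$.

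Next, I will transfer the three combinatorial properties to the ideal. Put $\Gamma_k=\Delta^{[k]}$, $Y_k=V(\Gamma_k)$ and $Z_k=X\setminus Y_k$. As in the proof of Theorem~\ref{thm:7}, we can write $J_\ell=x^{Z_k}I_{\Gamma_k^{\vee_{Y_k}}}$, and Lemma~\ref{lem:9} reduces everything to the ring on $Y_k$, with degrees shifted by $|Z_k|$. On that ring, three results apply.

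\begin{itemize}
\item By the theorem of Moradi and Khosh-Ahang \cite[Theorem~2.3]{MoradiKhoshAhang2016}, vertex decomposability of $\Gamma_k$ gives vertex splittability.
\item Since vertex decomposable complexes are shellable, the facet-order computation in the proof of Theorem~\ref{thm:7}(ii) gives linear quotients. That computation applies verbatim to any pure complex, in any dimension.
\item Shellable implies Cohen--Macaulay over every field. The Eagon--Reiner theorem \cite{EagonReiner1998} then gives a linear resolution, whose degree is the common generator degree.
\end{itemize}

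Alternatively, linear quotients of an equigenerated ideal directly yield a linear resolution, which avoids the field issue altogether.

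Two details need checking, and I expect them to be the main obstacle, though both are bookkeeping. First, the generators of $J_\ell$ all have degree exactly $\ell$, since they are complements of $(k+1)$-subsets of the $n$-set $X$. So the linear resolution is $\ell$-linear, matching the statement. Second, the degenerate case $k=-1$: here $\Delta^{[-1]}=\{\emptyset\}$ and $J_n$ is the principal ideal $(x^X)$, which is trivially vertex splittable, has linear quotients, and has an $n$-linear resolution. Similarly, if $\Gamma_k$ is a simplex, then $J_\ell$ is principal. Apart from these cases, Theorem~\ref{thm:7} was stated only for the top index $d$, so I apply its proof rather than its statement to $\Gamma_k$, with $k$ in place of $b$. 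This is legitimate because Lemma~\ref{lem:8} already supplies the identification $J_\ell=I_{(\Delta^{[k]})^\vee}$ for every $k\leq b$.
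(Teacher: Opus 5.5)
Your proof is correct and follows essentially the paper's route: you use Theorem~\ref{thm:3} to get vertex decomposability of $\Delta^{[k]}$, Lemma~\ref{lem:8} to identify $J_{n-k-1}$ with $I_{(\Delta^{[k]})^{\vee}}$, and then the dictionary of Theorem~\ref{thm:7}. The only difference is cosmetic. The paper applies the \emph{statement} of Theorem~\ref{thm:7} to the pure complex $\Delta^{[k]}$ itself, whose initial dimension is $k$, so its ``$d$'' is already $n-k-1$; this spares you from re-running the proof of that theorem.
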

	
	\begin{proof}
		Apply Theorem~\ref{thm:3}, Lemma~\ref{lem:8}, and Theorem~\ref{thm:7} to $\Delta^{[k]}$.
	\end{proof}
	
	\begin{remark}\label{rem:5}
		Theorem~\ref{thm:7}(i)--(ii) concerns the squarefree ideal $J_d$ in degree $d$. In particular, vertex divisibility of $J$ does not mean that $J$ itself is vertex splittable, and degree quotients of $J$ do not mean that $J$ itself has linear quotients. These conditions in degree $d$ also do not determine the strong recursion.  Example~\ref{ex:4} has a vertex splittable $J_d$ but no strong dismissal.  Strong vertex divisibility is the separate recursive property of the Alexander dual ideal itself defined above. By contrast, the degree resolution condition in Theorem~\ref{thm:7}(iii) is a property of $J$ itself.
	\end{remark}
	
	\section{Depth and projective dimension}\label{sec:7}
	
	Let $\K[\Delta]=R/I_\Delta$.  We use the following classical criterion relating depth to skeleta. It also makes the dependence on the field and the grading shifts explicit.
	
	\begin{lemma}\cite{HerzogSoleymanJahanZheng2010}\label{lem:depth}
		Let $b=\indim\Delta$.  For every $-1\leq k\leq b$, $\Delta^k$ is Cohen--Macaulay over $\K$ if and only if $\depth\K[\Delta]\geq k+1$. Therefore,
		\[
		\depth\K[\Delta]
		=1+\max\{-1\leq k\leq b:\Delta^k
		\text{ is Cohen--Macaulay over }\K\}.
		\]
	\end{lemma}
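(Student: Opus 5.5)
The plan is to prove the depth criterion through Hochster's formula, or equivalently through the standard skeleton characterization of depth. We may use the classical theorem of Hibi (also in Herzog--Soleyman Jahan--Zheng and Smith): for any complex $\Delta$ of dimension $\ge k$, $\depth\K[\Delta]\geq k+1$ holds if and only if the ordinary skeleton $\Delta^k$ is Cohen--Macaulay over $\K$. The work is to check that this applies for every $-1\le k\le b$, and that the resulting depth is at most $b+1$.

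First I would prove that $\depth\K[\Delta]\ge k+1$ is equivalent to $\Delta^k$ being Cohen--Macaulay, in two steps.

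\emph{Reformulating depth.} By Hochster's formula for local cohomology, $\depth\K[\Delta]\geq k+1$ exactly when, for every face $\sigma$ (including $\emptyset$), $\widetilde H_{i}(\link_\Delta\sigma;\K)=0$ for all $i<k-|\sigma|$.

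\emph{Comparing with Reisner's criterion for $\Delta^k$.} The skeleton $\Delta^k$ is pure of dimension $k$ by Lemma~\ref{lem:1}, since $k\le b$. For a face $\sigma$ with $|\sigma|\le k+1$ we have $\link_{\Delta^k}\sigma=(\link_\Delta\sigma)^{k-|\sigma|}$. Taking a skeleton of dimension $k-|\sigma|$ preserves reduced homology in degrees strictly below $k-|\sigma|$, which is the same fact used in Lemma~\ref{lem:6}. Reisner's criterion for the pure complex $\Delta^k$ asks exactly for vanishing of $\widetilde H_i$ of these links for $i<\dim\link_{\Delta^k}\sigma=k-|\sigma|$. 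The two vanishing conditions therefore coincide.

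Faces of $\Delta$ with $|\sigma|>k+1$ need separate attention, since they are not faces of $\Delta^k$. For them the depth condition asks for vanishing in negative degrees $i<k-|\sigma|\le -2$, which holds automatically. This bookkeeping of degree ranges, including the empty face and the edge case $k=-1$, is the only real obstacle. I would handle it by stating precisely the version of Hochster's formula $H^i_{\mathfrak m}(\K[\Delta])_{-\sigma}\cong\widetilde H^{i-|\sigma|-1}(\link\sigma;\K)$.

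For the displayed formula, I would use the bound $\depth\K[\Delta]\le\min\{\dim R/P: P\in\operatorname{Ass}\}$. The minimal primes of $I_\Delta$ correspond to facets, so this minimum is $b+1$. Hence $\depth\K[\Delta]\le b+1$, and the set of $k$ with $\depth\K[\Delta]\ge k+1$ is an initial segment of $\{-1,\dots,b\}$. This set is nonempty because $\Delta^{-1}=\{\emptyset\}$ is Cohen--Macaulay. Combined with the equivalence from the first part, this gives
\[
\depth\K[\Delta]=1+\max\{-1\leq k\leq b:\Delta^k\text{ is Cohen--Macaulay over }\K\}.
\]
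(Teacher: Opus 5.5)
Your proof is correct, but it is not how the paper treats the statement. The paper gives no proof: it quotes the classical skeleton criterion from Herzog, Soleyman Jahan, and Zheng. The inequality $\depth\K[\Delta]\le b+1$, which turns the equivalence into the displayed formula, is justified only later, inside the proof of Corollary~\ref{cor:12}, using the minimal primes coming from facets of minimum dimension.

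You instead derive the equivalence directly. Hochster's formula turns $\depth\K[\Delta]\ge k+1$ into the vanishing of $\widetilde H_i(\link_\Delta\sigma;\K)$ for $i<k-|\sigma|$. The identity $\link_{\Delta^k}\sigma=(\link_\Delta\sigma)^{k-|\sigma|}$, together with the skeleton-truncation fact already used in Lemma~\ref{lem:6}, matches this condition face by face with Reisner's criterion for the pure complex $\Delta^k=\Delta^{[k]}$. Faces with $|\sigma|\ge k+2$ impose only vacuous conditions, as you note. Your associated-prime bound then gives the formula.

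The citation buys generality, since the cited work treats skeletons of arbitrary monomial ideals, not only Stanley--Reisner rings, and it saves space. Your route gives a self-contained argument in the squarefree setting. It shows exactly where $k\le b$ enters: purity of $\Delta^k$ fixes Reisner's range $i<k-|\sigma|$. It also makes clear that the field enters only through reduced homology over $\K$. Finally, it places the bound $\depth\K[\Delta]\le b+1$ in the proof of the displayed formula, where it is actually needed.
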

	
	The next corollary collects the earlier characterizations of initially Cohen--Macaulayness and degree resolutions in the notation of this paper, together with the standard Auslander--Buchsbaum formulation. The equivalence between initially Cohen--Macaulayness and degree resolutions is from \cite{Namiq2026ICM}, and the notion of a degree resolution is from \cite{Namiq2026Skeletons}.
	
	\begin{corollary}\label{cor:12}
		Let $X$ have $n$ elements, let $b=\indim\Delta$, put $d=n-b-1$, and set $J=I_{\Delta^{\vee}}$.  The following are equivalent:
		\begin{enumerate}[label=\textup{(\roman*)}]
			\item $\Delta$ is initially Cohen--Macaulay over $\K$;
			\item $\depth\K[\Delta]=b+1$;
			\item $\pdim\K[\Delta]=n-b-1=d$;
			\item $J_d$ has a $d$-linear resolution over $\K$;
			\item $J$ has a degree resolution, equivalently $\reg J=d$.
		\end{enumerate}
	\end{corollary}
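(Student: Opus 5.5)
The plan is to prove the equivalences by a cycle through the depth criterion. The Alexander dual conditions come from Theorem~\ref{thm:7}, and the projective dimension condition from Auslander--Buchsbaum. The key input is Lemma~\ref{lem:depth}. Since $\depth\K[\Delta]\leq\dim\K[\Delta]$, and more sharply $\depth\K[\Delta]\leq\min\{\dim\K[\F]\}$ over facets $F$ (equivalently $\depth\leq b+1$ because $\Delta^k$ for $k>b$ is not among the admissible skeleta), the maximum in Lemma~\ref{lem:depth} ranges only over $k\leq b$. Hence $\depth\K[\Delta]\leq b+1$ always.

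\textbf{(i)$\Leftrightarrow$(ii).} By Definition~\ref{def:4}(iii) and Lemma~\ref{lem:1}, (i) says that $\Delta^b=\Delta^{[b]}$ is Cohen--Macaulay over $\K$. Lemma~\ref{lem:depth} with $k=b$ says this holds if and only if $\depth\K[\Delta]\geq b+1$. Combined with the upper bound $\depth\K[\Delta]\leq b+1$, which is read off from the displayed formula in Lemma~\ref{lem:depth}, this is equality. So (i)$\Leftrightarrow$(ii).

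\textbf{(ii)$\Leftrightarrow$(iii).} Apply the Auslander--Buchsbaum formula over the polynomial ring $R$ in $n$ variables: $\pdim\K[\Delta]=n-\depth\K[\Delta]$. Then $\depth=b+1$ exactly when $\pdim=n-b-1=d$.

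\textbf{(i)$\Leftrightarrow$(iv)$\Leftrightarrow$(v).} This is Theorem~\ref{thm:7}(iii). Lemma~\ref{lem:8} gives $J_d=I_{(\Delta^{[b]})^\vee}$, and Eagon--Reiner shows this has a $d$-linear resolution iff $\Delta^{[b]}$ is Cohen--Macaulay. For (v), note first that $\deg J=d$. The generators of $J$ are $x^{X\setminus F}$ for facets $F$, and the largest degree is $n-(b+1)$, attained at a minimum-dimensional facet. So a degree resolution means $\reg J=d$, and the equivalence with (i) is the cited \cite[Proposition~4.4]{Namiq2026ICM} (or \cite[Proposition~3.3]{Namiq2026Skeletons} linking $\reg J=\deg J$ with linearity of $J_d$).

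The only delicate point is the upper bound $\depth\K[\Delta]\leq b+1$. If one does not want to extract it from the formula in Lemma~\ref{lem:depth}, I would argue directly. Choose a $b$-dimensional facet $F$. Then $\link_\Delta(F)=\{\emptyset\}$, and the standard inequality $\depth\K[\Delta]\leq\dim F+1+\depth\K[\link_\Delta F]$ gives the bound. Alternatively, use the ideal $J$: since $J$ has a minimal generator of degree $d$, the bound $\reg J\geq d$ together with Terai's formula $\reg J=\pdim\K[\Delta]$ yields $\pdim\geq d$, which is again the same bound. I expect everything else to be bookkeeping with the ground set $X$ versus $V(\Delta)$. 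Extra ground variables do not change depth minus number of variables, and Lemma~\ref{lem:9} handles the ideal side.
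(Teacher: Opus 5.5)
Your proof is correct and follows the paper's route: the bound $\depth\K[\Delta]\le b+1$ from the primes attached to $b$-dimensional facets (your ``$\min\dim\K[F]$'' bound, which is the justification that actually does the work), Lemma~\ref{lem:depth} at $k=b$, Auslander--Buchsbaum for (iii), and Theorem~\ref{thm:7}(iii) for (iv). The only difference is in (v): the paper gets (iii)$\Leftrightarrow$(v) directly from Terai's equality $\reg J=\pdim\K[\Delta]$ together with $\deg J=d$, an argument you already invoke in your last paragraph, rather than citing the earlier work for (v).
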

	
	\begin{proof}
		The equivalence of (i) and the inequality $\depth\K[\Delta]\geq b+1$ is Lemma~\ref{lem:depth}.  Every facet of minimum dimension $F$ gives a minimal prime of $\K[\Delta]$ of dimension $|F|=b+1$, so $\depth\K[\Delta]\leq b+1$.  This proves (i)$\Longleftrightarrow$(ii). Auslander--Buchsbaum gives (ii)$\Longleftrightarrow$(iii).  The equivalence with (iv) is Theorem~\ref{thm:7}(iii).  Finally, Terai's Alexander dual equality
		\[
		\reg I_{\Delta^{\vee}}=\pdim\K[\Delta]
		\]
		gives (iii)$\Longleftrightarrow$(v) \cite{Terai1999}.
	\end{proof}
	
	\begin{remark}
		The number $d$ is the maximum degree of a minimal generator of $J$, because those generators are $x^{X\setminus F}$ with $F$ a facet of $\Delta$ and $\min\{|F|:F\in\F(\Delta)\}=b+1$.  Thus condition (v), the degree resolution condition, is a numerical statement about the regularity of the Alexander dual ideal itself.  It does not say that $J$ has a $d$-linear resolution when generators of lower degree occur, and it does not transfer the vertex splitting or linear quotient properties of $J_d$ to $J$.
	\end{remark}
	
	\begin{corollary}\label{cor:13}
		Let $X$ have $n$ elements and let $b=\indim\Delta$.  If $\Delta$ is strongly vertex dismissible, vertex dismissible, or scalable, then for every field $\K$,
		\[
		\depth\K[\Delta]=b+1,
		\qquad
		\pdim\K[\Delta]=n-b-1.
		\]
	\end{corollary}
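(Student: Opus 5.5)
The plan is to reduce all three hypotheses to the single statement that $\Delta$ is initially Cohen--Macaulay over every field $\K$, and then to read off depth and projective dimension from Corollary~\ref{cor:12}.

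First, I will use the chain of implications in Corollary~\ref{cor:2}:
\[
\text{strongly vertex dismissible}\Longrightarrow\text{vertex dismissible}\Longrightarrow\text{scalable}\Longrightarrow\text{initially Cohen--Macaulay over }\K .
\]
So each of the three hypotheses implies that $\Delta^{[b]}$ is shellable. Shellability is a purely combinatorial property and does not depend on the field. A pure shellable complex is Cohen--Macaulay over every field, so $\Delta^{[b]}$ is Cohen--Macaulay over every $\K$. By Definition~\ref{def:4}(iii), this means $\Delta$ is initially Cohen--Macaulay over every field.

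Next, fix an arbitrary field $\K$ and apply Corollary~\ref{cor:12}. The implication (i)$\Rightarrow$(ii) gives $\depth\K[\Delta]=b+1$. The implication (i)$\Rightarrow$(iii) gives $\pdim\K[\Delta]=n-b-1$; this step is the Auslander--Buchsbaum formula applied in the polynomial ring $R$ on $n$ variables.

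One point needs care. The ground set $X$ may be strictly larger than $V(\Delta)$. Corollary~\ref{cor:12} is stated with $R=\K[x_v:v\in X]$ and $n=|X|$, so the formula $n-b-1$ refers to $|X|$, and no adjustment is needed. The only further check is that the depth bound $\depth\K[\Delta]\le b+1$ used in Corollary~\ref{cor:12} still holds when $X$ contains extra elements. It does: those extra variables lie in $I_\Delta$, so they do not change the minimal primes of $\K[\Delta]$ or their dimensions.

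I do not expect a real obstacle. The only substantive point is field-independence, and that follows because the weakest of the three hypotheses, scalability, is a combinatorial condition that forces Cohen--Macaulayness of $\Delta^{[b]}$ over every field simultaneously.
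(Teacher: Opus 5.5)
Your proposal is correct and matches the paper's proof: each hypothesis makes $\Delta^{[b]}$ shellable, hence Cohen--Macaulay over every field, and Corollary~\ref{cor:12} then yields $\depth\K[\Delta]=b+1$ and $\pdim\K[\Delta]=n-b-1$. Your remark about ground-set elements outside $V(\Delta)$ is a harmless sanity check that the paper leaves implicit.
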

	
	\begin{proof}
		Each stated hypothesis makes $\Delta^{[b]}$ Cohen--Macaulay over every field. Apply Corollary~\ref{cor:12}.
	\end{proof}
	
	\begin{remark}\label{rem:6}
		The exact formulas in Corollary~\ref{cor:12} are field dependent in general, because Cohen--Macaulayness of a pure complex can depend on the characteristic.  The hypotheses in Corollary~\ref{cor:13} remove that dependence by providing a combinatorial shelling or vertex decomposition. For other Alexander dual ideals, the ring, ground set, and grading shift must be fixed before determining projective dimension or regularity.
	\end{remark}
	
	\section{Limitations and open problems}\label{sec:8}
	
	Strong vertex dismissibility is not preserved under links.
	
	\begin{example}\label{ex:5}
		Let $\Delta=\langle de,ace,bcd\rangle$. The vertex $a$ is dismissing, its link is the simplex $\langle ce\rangle$, and its deletion is $\langle de,ce,bcd\rangle$. In that deletion, $b$ is dismissing with simplex link $\langle cd\rangle$, and the remaining deletion $\langle de,ce,cd\rangle$ is a connected pure graph. Hence $\Delta$ is strongly vertex dismissible. However,
		\[
		\link_\Delta(c)=\langle ae,bd\rangle
		\]
		is a disjoint union of two edges. It is pure and not vertex decomposable, so it is not strongly vertex dismissible by Proposition~\ref{prop:2}.
	\end{example}
	
	The link is strongly vertex dismissible in at least the following three situations. If $\Delta$ is vertex decomposable in the usual sense, then all its links are vertex decomposable and hence strongly vertex dismissible. If a face is obtained by successive links along a fixed strong dismissal sequence, its link is strongly vertex dismissible by construction. Finally, if $\Delta$ is strongly vertex dismissible and $\Delta=2^\sigma*\Gamma$, then $\link_\Delta(\sigma)=\Gamma$, and Theorem~\ref{thm:1} applies to the two factors. These conditions are sufficient, but they do not cover every such face.
	
	\begin{question}\label{q:1}
		For which natural classes does vertex decomposability of the pure initial skeleton imply strong vertex dismissibility of the complex itself? Can such classes be characterized by elimination orders, forbidden induced subcomplexes, or restrictions on the intersections of facets of larger dimension with the pure initial skeleton?
	\end{question}
	
	The pure case and Theorem~\ref{thm:2} give classes for which the implication holds; Example~\ref{ex:4} shows that the converse fails in general. The main obstruction is that the pure initial skeleton does not determine the links arising from facets of larger dimension.
	
	The recursive Alexander dual description of strong vertex divisibility leads to a second problem. Let $S$ be a polynomial ring, let $x$ be a variable not occurring in $S$, and suppose that $x$ is a dividing variable for a strongly vertex divisible ideal $I\subseteq S[x]$. Put
	\[
	J=(I:x)
	\qquad\text{and}\qquad
	K=I\cap S.
	\]
	Then $K\subseteq J$ and
	\[
	I=xJ+K.
	\]
	Unlike a vertex splitting, this decomposition need not give a Betti splitting, since the inclusion $K\hookrightarrow J$ may induce nonzero maps on Tor. Define the \emph{homological redundancy} by
	\[
	\delta_{i,j}
	=
	\dim_{\K}\operatorname{Im}\!\left(
	\operatorname{Tor}_i^S(K,\K)_j
	\longrightarrow
	\operatorname{Tor}_i^S(J,\K)_j
	\right).
	\]
	The short exact sequence
	\[
	0\longrightarrow xK
	\longrightarrow xJ\oplus K
	\longrightarrow I
	\longrightarrow0
	\]
	then gives
	\[
	\beta_{i,j}(I)
	=
	\beta_{i,j-1}(J)
	+\beta_{i,j}(K)
	+\beta_{i-1,j-1}(K)
	-\delta_{i,j-1}
	-\delta_{i-1,j-1},
	\]
	where the Betti numbers of $J$ and $K$ are computed over $S$, with $\beta_{-1,*}(K)=\delta_{-1,*}=0$.
	
	\begin{question}\label{q:homological-redundancy}
		Can the homological redundancy $\delta_{i,j}$ be described combinatorially from the recursive data of a strongly vertex divisible ideal? In particular, is there a combinatorial formula for the correction terms $\delta_{i,j-1}$ and $\delta_{i-1,j-1}$ in terms of the corresponding deletion--link data or a strong dismissal sequence?
	\end{question}
	
	For a finite complex $\Sigma$, taking the maxima over $-1\leq k\leq\indim\Sigma$, set
	\[
	s(\Sigma)=\max\{k:\Sigma^k\text{ is shellable}\},\qquad
	c_{\K}(\Sigma)=\max\{k:\Sigma^k\text{ is Cohen--Macaulay over }\K\}.
	\]
	The two thresholds can differ, and the second can depend on the characteristic. For example, take $\Sigma=\Gamma_1$ from Example~\ref{ex:3}. Its one-skeleton is connected and therefore shellable and Cohen--Macaulay over every field, while $\Gamma_1$ itself is not shellable and is Cohen--Macaulay exactly when $\operatorname{char}\K\ne2$. Therefore
	\[
	s(\Gamma_1)=1,\qquad
	c_{\K}(\Gamma_1)=
	\begin{cases}
		2,&\operatorname{char}\K\ne2,\\
		1,&\operatorname{char}\K=2.
	\end{cases}
	\]
	
	\begin{question}\label{q:thresholds}
		What are the smallest complexes for which $s(\Sigma)<c_{\K}(\Sigma)$, and which structural restrictions imply equality? How complicated can the characteristic dependence of $c_{\K}(\Sigma)$ be?
	\end{question}

\end{document}